\documentclass[12pt,a4paper]{amsart}
\usepackage[utf8]{inputenc} %

\title{Maker-Breaker games on infinite graphs with precolored edges}
\author[N.~Bowler]{Nathan Bowler}
\author[F.~Gut]{Florian Gut}
\author[H.~Ortm{\"u}ller]{Henri Ortm{\"u}ller}
\email{nathan.bowler@uni-hamburg.de, henri.ortmueller@unifr.ch}

\date{\today}

\usepackage{graphicx} %

\usepackage[left=25mm,right=25mm,top=30mm,bottom=30mm,includeheadfoot]{geometry} %
\usepackage{amsmath,amsthm,amssymb} %
\usepackage{thmtools} %
\usepackage{mathtools} %
\usepackage{dsfont} %
\usepackage[colorlinks, linkcolor = blue]{hyperref} %
\usepackage[nameinlink]{cleveref}
\usepackage{microtype} %
\usepackage{esvect} %
\usepackage{caption} %
\usepackage{subcaption} %
\usepackage{todonotes} %
\usepackage[backend=biber]{biblatex} %
\usepackage{tikz, tikz-network} %
\usepackage[many]{tcolorbox} %
\usepackage{enumitem} %
\setlist[enumerate]{label=(\roman*)}
\usepackage{float} %
\allowdisplaybreaks %
\usepackage[makeroom]{cancel}

\newcommand{\N}{\mathbb{N}} %

\newcommand{\Mod}[1]{\ (\mathrm{mod}\ #1)}

\newcommand{\SalephNull}{S_{\aleph_0}}
\newcommand{\KalephNull}{K_{\aleph_0}}
\newcommand{\alephNull}{\aleph_0}

\newcommand{\MBcolorGraphs}[1]{\text{MB}_{\text{col}}(G_1, \dots, G_{#1})}
\newcommand{\MBcolor}[2]{\text{MB}_{\text{col}}(#1, #2)}

\newcommand{\MBpatternGraphs}[1]{\text{MB}_{\text{pat}}(G_1, \dots, G_{#1})}

\newcommand{\MBpartpattern}[4]{\text{MB}^{#2}_{\text{par}}(#1, #3, #4)}
\newcommand{\MBpartpatternNumber}[3]{f_{\text{par}}^{#2}(#1, #3)}

\newcommand{\MBauxM}[2]{\text{MB}^{(#2)}_{\text{aux}}(#1)}
\newcommand{\MBauxB}[2]{\overline{\text{MB}}^{(#2)}_{\text{aux}}(#1)}
\newcommand{\MBauxVertexM}[3]{\text{MB}^{(#2)}_{\text{aux}}(#1, #3)}
\newcommand{\MBauxVertexB}[3]{\overline{\text{MB}}^{(#2)}_{\text{aux}}(#1, #3)}

\newcommand{\MBAscInfStars}[3]{\text{MB}_{\ref{thm:(part)pattStar}}(#1, #2, #3)}
\newcommand{\MBfin}[3]{\text{MB}_{\text{fin}}(#1, #2, #3)}

\newtheorem{theorem}{Theorem}[section] 
\newtheorem{lemma}[theorem]{Lemma}

\newtheorem{corollary}[theorem]{Corollary}
\theoremstyle{definition}
\newtheorem{definition}[theorem]{Definition}

\newtheorem{remark}[theorem]{Remark}
\newtheorem{problem}[theorem]{Problem}

\newenvironment{caseproof}[1]
{%
  \begin{proof}[Proof of Case #1:]
  
}
{%
  \end{proof}
}

\begin{document}

\begin{abstract}
    Suppose we are given graphs $B$ and $G$. 
    In the classical \emph{Maker-Breaker game} $\text{MB}(B,G)$ two players, Maker and Breaker, alternately claim edges of $B$ and it is Maker's goal to claim a copy of $G$ in $B$, while it is Breaker's goal to prevent that.
    In this paper, $B$ is the countably infinite complete graph $K_{\aleph_0}$ and we are given finitely many infinite subgraphs $G_1, \dots, G_k \subseteq B$. 
    In the color preserving game, it will be Maker's goal to claim a $K_{\aleph_0} \subseteq B$, which contains infinitely many edges of each $G_i$.
    We present sufficient winning conditions for both Maker and Breaker, if $k > 1$ and a full characterization of the game, if $k =1$. 
    This partly answers a question of Bowler, Emde and Gut \cite{Bowler_2023}.
    In the (partially) pattern preserving game, it is Maker's goal to claim a copy $K$ of $K_{\aleph_0}$, such that $G_i \cap K$ is isomorphic to (a subgraph of) $G_i$ for all $i \in [k]$. In those games, we investigate some patterns for which Maker has a winning strategy.
\end{abstract}

\maketitle

\section{Introduction}

In recent years, increasing attention has been drawn toward infinite structure preserving Maker-Breaker games \cite{Bowler_2023, Bowler_2024}.
In this paper, all Maker-Breaker games we study are played on the countably infinite board $B = \KalephNull$ unless specified otherwise. Moreover, Maker and Breaker will alternately claim edges of $B$ and Maker begins. It will always be Maker's goal to claim a copy of $\KalephNull$ in $B$ with some additional structure depending on the game and Breaker's goal to prevent Maker from doing so.

Since it has been shown by Bowler, Emde and Gut \cite{Bowler_2023} that Maker has a winning strategy in $\text{MB}(\KalephNull, \KalephNull)$, they considered in a next step the countably infinite board $B$ together with a precoloring of its vertices.
For the Maker-Breaker game in which Maker's goal is to claim a $\KalephNull$ containing every color infinitely often, they answered most of the open questions in \cite{Bowler_2023}. If one considers precolored edges instead, the situation is less clear.
In the game with precolored edges, whether Maker has a winning strategy, depends on the particular arrangement of the colors.
If we have a 2-coloring of $E(B)$ using black and white for example and the black edges form an infinite star, Maker has a winning strategy, whereas Breaker has a winning strategy if the black edges form an infinite matching.
In this paper, we investigate further, for which colorings Maker, and for which colorings Breaker has a winning strategy.
This (partially) answers a question of Bowler, Emde and Gut \cite[Question 6.1]{Bowler_2023}.

To formalize the game above, suppose we are given a coloring of the board $c \colon E(B) \to [k]$ for some $k \in \N$.
In the game $\MBcolor{k}{c}$, we say that Maker wins, if she can claim a copy $K \subseteq B$ of $\KalephNull$ that contains each color infinitely often, i.e.~ $|c^{-1}(\{i\}) \cap K| = \alephNull$ for all $i \in [k]$. For $k = 2$, we can give the following complete characterization of which player has a winning strategy in $\MBcolor{k}{c}$.
\begin{theorem}
    \label{thm:2ColoringCol}
    Given a coloring $c \colon E(B) \to [2]$. If the graph induced by $c^{-1}(\{1\})$ or $c^{-1}(\{2\})$ has finite maximum degree, Breaker has a winning strategy in $\MBcolor{2}{c}$. Otherwise, Maker has a winning strategy in $\MBcolor{2}{c}$.
\end{theorem}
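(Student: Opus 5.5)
The two directions are handled separately. For Breaker I reduce to a statement about matchings. For Maker I interleave "colour-injecting" rounds into the known winning strategy for $\text{MB}(\KalephNull,\KalephNull)$ from \cite{Bowler_2023}.

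\emph{Breaker's side.} Suppose, say, $H_1 := c^{-1}(\{1\})$ has maximum degree $d<\infty$.
\begin{enumerate}
\item Properly edge-colour $H_1$ with $d+1$ colours. Vizing's theorem does this for finite graphs, and compactness extends it to countable ones. This writes $H_1 = M_1\cup\dots\cup M_{d+1}$ as a union of matchings.
\item If Maker's clique $K$ contains infinitely many $H_1$-edges, then by pigeonhole infinitely many lie in a single $M_j$. These form an infinite induced matching of $M_j$ inside $K$.
\item For two $M_j$-edges $ab$ and $cd$, Maker can only have both inside her clique if she owns all four cross edges $ac,bd,ad,bc$. In the matching case this yields a genuine pairing strategy: pair $ac$ with $bd$ and $ad$ with $bc$. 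The pairs are well defined because each vertex has at most one $M_j$-partner. Breaker answers each Maker move inside a pair by taking the other edge, so no two $M_j$-edges ever lie in a common Maker clique.
\item For $d+1$ matchings, an edge $xy$ lies in up to $d+1$ "partner structures" from each endpoint, so at most $(d+1)^2$ of them. Breaker must handle all of these with one move per turn. I would try a bounded-degree refinement. Order the vertices. For each $j$, only pairs $ab,cd\in M_j$ matter, with $\min\{a,b\}<\min\{c,d\}$. Breaker then needs to destroy only one of the four cross edges per such pair, and only for all but finitely many pairs along any potential clique.
\end{enumerate}
Step (iv) is a hypergraph game with hyperedges of bounded degree. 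I expect it to be the main technical obstacle. My first attempt would be to split the $d+1$ matchings into separate "threads" and let Breaker cycle through them round robin. If that fails, I would fall back on the weaker requirement that Breaker block just one cross edge among $ac,ad$ for the later vertex.

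\emph{Maker's side.} Now both colour classes have unbounded degrees, so for each $i$ and $n$ there are vertices of $c^{-1}(\{i\})$-degree at least $n$. Maker runs the $\text{MB}(\KalephNull,\KalephNull)$ strategy of \cite{Bowler_2023}. That strategy maintains a finite Maker clique $C$ together with a reservoir of vertices fully joined to $C$ by Maker edges and still "fresh", meaning Breaker has touched only finitely many edges at them. I would verify that this strategy can be started from any finite position in which Breaker has claimed finitely many edges, and that it can be paused for finitely many moves.

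The colour-injecting rounds alternate between $i=1$ and $i=2$. In the round for colour $i$, Maker picks a vertex $v$ whose $c^{-1}(\{i\})$-degree is much larger than the number of edges claimed so far. She extends $C$ by $v$ together with a colour-$i$ neighbour $w$ of $v$, both chosen outside the finitely many vertices touched by Breaker. She does this by claiming, one per turn, $vw$ and all edges from $\{v,w\}$ to $C$. Each Breaker reply kills at most one candidate, and $v$ has more candidate neighbours than Breaker has moves. So $C$ grows by a colour-$i$ edge.

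The delicate point is ensuring that candidates $v,w$ with all edges to $C$ still free always exist. For this, the reservoir step of the base strategy has to be made to choose $v$ itself among high-degree vertices. The base strategy is flexible enough for this because it only requires that vertices be unclaimed by Breaker. Alternating the two colours forever produces a $\KalephNull$ containing infinitely many edges of each colour.
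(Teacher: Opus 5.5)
Your proposal has genuine gaps on both sides. On the Breaker side the decisive step is left open; on the Maker side the colour-injection step fails.

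\textbf{Breaker's side.} Your pairing for a single matching in (iii) is correct, but (iv) is where the whole difficulty lies, and you leave it open. A round-robin over the $M_j$ does not address it. A single Maker edge $xy$ is a crossing edge for every $M_j$ that covers both $x$ and $y$, so it can open threats in up to $d+1$ threads at once, while Breaker has only one reply. Your fallback of blocking one of $ac,ad$ has the same overlap problem. A $4$-uniform threat hypergraph of maximum degree $d+1>2$ does not yield a pairing strategy by any general principle.

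The missing idea is to refine the decomposition until the crossing pairs are \emph{globally} disjoint, so that one honest pairing strategy handles all classes at once. The paper does this in four steps:
\begin{enumerate}
\item It colours $L(H_1)^2$ properly with finitely many colours (\Cref{lem:chromaticNumber}). So besides being proper, each colour-$i$ edge meets at most one colour-$j$ edge.
\item It fixes an orientation of $H_1$.
\item It refines each edge's colour by recording, for every $j$, at which end (tail or head) of the edge and of its unique colour-$j$ neighbour the two meet. This gives $k\cdot 4^k$ classes.
\item For $\vv{uv},\vv{wx}$ in the same class, it pairs only $ux$ with $vw$.
\end{enumerate}
A short case check shows every edge of $B$ lies in at most one pair. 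Hence the pairing strategy leaves at most one $H_1$-edge of each class in any Maker clique.

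\textbf{Maker's side.} Here the argument fails as stated. The known strategy for $\text{MB}(\KalephNull,\KalephNull)$ (see the proof of \Cref{thm:lucagame}) does not grow one committed finite clique with a reservoir. Maker claims the comparability graph of a finitely branching tree and finds the clique along a ray by K\"onig's lemma only at the end.

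A committed clique $C$ can never be extended by fresh vertices. Attaching $w$ to $C\cup\{v\}$ costs Maker $|C|+1\ge 2$ edges at $w$. Breaker answers her first edge at $w$ by claiming another edge from $w$ to $C\cup\{v\}$, and he can kill $v$ the same way. So ``each reply kills at most one candidate, and $v$ has more candidates than Breaker has moves'' is false. Breaker gets a move for every Maker move and each candidate costs Maker at least two, so no candidate is ever completed, however large $d_{G_i}(v)$ is.

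The colour therefore has to be injected inside the tree construction. The paper first uses Ramsey's theorem to assume that colour $2$ (writing $G_i$ for colour class $i$) contains a copy $K'$ of $\KalephNull$. It then splits into two cases.
\begin{enumerate}
\item If $G_1$ is of order $C\ge 1$, it takes $r$ of infinite $G_1$-degree. It applies \Cref{thm:lucagame} with root $r$ to the vertex colouring $x\mapsto c(rx)$. If $N_{G_2}(r)$ is finite, it instead applies \Cref{thm:lucagame} with one colour on the board $\{r\}\cup(N_{G_1}(r)\cap V(K'))$.
\item If $G_1$ is ascending, it applies \Cref{thm:(part)pattStar} with $s=0$, $t=2$. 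There the tree is extended by hanging, below a suitable leaf, the centre of a large fresh finite star together with a copy of $T(\tau,n)$ built from leaves of that same star. Every ray through the centre then contains star edges.
\end{enumerate}
That gadget is what your injection step would have to be replaced by.
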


For three or more colors the situation is less clear. Observe that in the game $\MBcolor{k}{c}$, the graphs induced by $c^{-1}(\{i\})$ have to partition the edges of $\KalephNull$. We consider the more general game, we call the \emph{color preserving game} $\MBcolorGraphs{k}$, where instead of an edge coloring, we are given graphs $G_1, \dots, G_k \subseteq B$ (which neither have to be disjoint nor covering $E(B)$). In $\MBcolorGraphs{k}$, it is Maker's goal to claim a copy $K \subseteq B$ of $\KalephNull$, such that $|K \cap G_i| = \alephNull$ for all $i \in [k]$.
For $k \geq 3$, we will later provide conditions for which Maker (respectively Breaker) can win $\MBcolorGraphs{k}$ based on the structure of $G_1, \dots, G_k$.

Observe that by Ramsey's theorem \cite{RamseysThm}, one color in the 2-coloring in \Cref{thm:2ColoringCol} has to induce an infinite complete graph. This essentially reduces $\MBcolor{2}{c}$ to a game of the form $\text{MB}_{\text{col}}(G)$. If $G$ does not have bounded maximal degree, there either exists $v \in V(G)$ with $d_G(v) = \alephNull$ or a sequence of vertices $\{v_i\}_{i \in \N}$, such that $d_G(v_i) \geq i$ for all $i \in \N$. In both cases, we will not just prove that Maker can preserve an infinite number of edges in her $\KalephNull$, but something even stronger: Maker can (partially) preserve the structure of $G$, i.e.~a vertex of infinite degree or a sequence of vertices with increasing degrees, respectively.

Since the proofs concerning the color preserving game rely on which kind of structure Maker can preserve in her $\KalephNull$ and this is an interesting question in its own right, we phrase it explicitly:
For which graphs $G' \subseteq B$ can Maker preserve an entire copy of $G'$ in the $\KalephNull$ she claims?
Observe that Breaker has a trivial winning strategy if $E(G')$ is finite but of size at least two, by claiming an edge of $G'$ in his first move.
In this paper, we consider the following generalized version of the game above.

Suppose, we are given graphs $G_1, \dots, G_k \subseteq B$. We say that Maker has a winning strategy in the \emph{pattern preserving game} $\MBpatternGraphs{k}$, if she can claim a copy $K \subseteq B$ of $\KalephNull$, such that there exists a bijection $f \colon V(B) \to V(K)$ with
\begin{align*}
    \{x, y\} \in E(G_i) \Leftrightarrow \{f(x), f(y)\} \in E(G_i)
\end{align*}
for all $x,y \in V(B)$ and $i \in [k]$.

Let $K_j$ and $S_j$ be the clique and the star consisting of $j$ vertices, respectively. We define $G_{\text{cli}}$ and $G_{\text{star}}$ as the graphs consisting of pairwise vertex disjoint cliques $\{K_j\}_{j \in \N}$ and stars $\{S_j\}_{j \in \N}$, respectively. From results we present later, we obtain the following corollary.
\begin{corollary}
    \label{cor:patternCliqueStar}
    For $k \in \N$, let $G_1, \dots, G_k \subseteq B$ be pairwise vertex disjoint copies of $G_{\text{star}}$ or $G_{\text{cli}}$. Then, Maker has a winning strategy in $\MBpatternGraphs{k}$.
\end{corollary}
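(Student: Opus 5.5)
The plan is to reduce the pattern condition to a condition on the vertex set of Maker's clique, and then let Maker grow a clique $C$ one vertex at a time, taking each new vertex from a suitably large and still untouched component of some $G_i$.

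\textbf{Reduction.} Maker will claim all edges of a clique $K$, so $K\cap G_i$ is the induced graph $G_i[V(K)]$. Since the $G_i$ are pairwise vertex disjoint, each vertex of $B$ lies in at most one $G_i$. A bijection $f\colon V(B)\to V(K)$ with the required property therefore exists once the following two conditions hold.
\begin{enumerate}
\item For every $i\in[k]$, the non-trivial components of $G_i[V(K)]$ are, up to isomorphism, exactly the non-trivial components of $G_i$. That is, for each $j\ge 2$ there is exactly one copy of $K_j$ (respectively $S_j$), and nothing else.
\item $V(K)$ contains infinitely many vertices that are isolated in every $G_i[V(K)]$.
\end{enumerate}
Given these, $f$ maps each non-trivial component of each $G_i$ isomorphically onto its counterpart. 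It then sends the remaining vertices of $B$, which are isolated in every $G_i$ relative to the rest, bijectively onto the remaining vertices of $K$. Both remaining sets are countably infinite: on the $K$ side by (ii), and on the $B$ side because $B$ contains, for instance, vertices $x$ of $G_1$ that are leaves of stars or vertices of cliques whose only neighbours are in the same component. I will double-check this when the remaining vertices of $B$ turn out to lie inside $G_i$-components. Induced subgraphs of a clique are cliques and induced subgraphs of a star containing its centre are stars. Hence Maker can obtain a copy of $K_j$ (resp.\ $S_j$) by picking $j$ vertices of a single component of size $N\ge j$, taking the centre among them in the star case. She obtains a vertex satisfying (ii) by picking a single vertex from a component she never touches again.

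\textbf{Strategy.} Maker fixes an enumeration of the countably many tasks: ``embed a copy of $K_j$ (or $S_j$) for $G_i$'' for all $i,j$, interleaved with ``add an extra isolated vertex''. She keeps a finite clique $C$ of claimed edges. For each task she selects a fresh component $D$ of the relevant $G_i$ that is disjoint from $C$, far larger than needed, and such that Breaker has claimed no edge incident to $D$. Such a $D$ exists because there are infinitely many components and Breaker has claimed only finitely many edges so far. Since $D$ is a full component of $G_i$ and the $G_i$ are vertex disjoint, vertices of $D$ have no $G_{i'}$-edges to $C$ for any $i'$. Therefore adding vertices of $D$ creates exactly the intended new component, and no other $G$-edges.

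\textbf{Adding vertices.} To add one new vertex to $C=\{c_1,\dots,c_m\}$, Maker uses the halving trick. She takes a candidate set $X\subseteq D$ of size about $2^{m+1}$. In round $r$ she claims the edges $c_r x$ for the currently alive candidates $x$, one per move. Each Breaker move kills at most one candidate, so at least half survive each round. After $m$ rounds some candidate $x$ is fully joined to $C$, and she adds it to $C$. For the later vertices $x'$ of the same copy of $K_j$ or $S_j$, she treats the already added vertices of $D$ as part of $C$. The candidates for $x'$ come from the unused part of $D$, which is why $D$ must have size roughly $j\cdot 2^{m+j}$. For a star she first adds the centre and then the leaves.

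Breaker may also claim edges inside $D$ or from $D$ to $C$ before those candidates are considered. I will handle this by deleting every candidate touched by a Breaker edge. Each move deletes at most two vertices, so a constant-factor larger $D$ absorbs this. Because the components of each $G_i$ have unbounded size, a large enough $D$ is always available.

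\textbf{Main obstacle.} The delicate step is this bookkeeping. Maker must guarantee that when a copy of $K_j$ or $S_j$ is finished, the chosen vertices of $D$ are pairwise joined by Maker edges, including the non-$G_i$ pairs inside a star. She must also guarantee that no vertex of $D$ outside the chosen set ever enters $C$, so that each component appears exactly once. I would settle the first point by making the chosen vertices of $D$ part of $C$ immediately, so that all pairs are treated as clique edges. The second point follows because every later task uses a new component. The union of the increasing cliques $C$ is then a $\KalephNull$ satisfying (i) and (ii).
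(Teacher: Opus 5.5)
\textbf{The gap is in the step ``Adding vertices'', and the whole strategy fails there.} Once $|C|=m\ge 2$, the sets $E_x=\{c_1x,\dots,c_mx\}$ for $x\in X$ are pairwise disjoint, and you need Maker to claim all of one of them. Breaker answers each Maker edge $c_rx$ by claiming some other edge $c_{r'}x$ with $r'\neq r$. This kills $x$ the first time Maker touches it, however large $X$ or $D$ is. Against this Breaker your clique never gets beyond two vertices.

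The error is in your count ``each Breaker move kills at most one candidate, so at least half survive''. In each round Maker spends one move per candidate. Breaker therefore gets exactly as many moves as candidates processed, and he can always kill the one just processed. Halving works only when a single Maker edge lies in many target sets at once, as for the root-to-leaf paths of a binary tree in \Cref{lem:connVertexToHalfTrees}. Against pairwise disjoint target sets of size at least two it gives nothing. So a fixed finite clique cannot be extended greedily, whatever bookkeeping you do with fresh components.

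\textbf{The missing idea is to postpone the choice of the clique.} This is what the paper does in \Cref{thm:lucagame} and \Cref{thm:eq:Maker}:
\begin{enumerate}
\item Maker keeps a tree of bounded branching whose comparability graph she has claimed.
\item She joins each new vertex to an entire root-to-leaf path of a suitable binary subtree, using \Cref{rem:connVertexToHalfTrees}.
\item Only at the end does she read off the clique, as a ray given by K\"onig's lemma.
\end{enumerate}
The paper's proof of the corollary reduces to this machinery. It replaces each finite clique of a copy of $G_{\text{cli}}$ by a spanning star; the centre together with any of its leaves still spans a clique in $G_i$. It then applies \Cref{thm:(part)pattStar} with $s=0$ and $t=k$. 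There each finite star enters a tree as a block $T(\tau,h)$, with pairwise distinct $h$. The final clique is then a suitable subclique of the clique spanned by the ray. Your plan of one fresh, large component per size fits this picture, but it has to run inside such a tree construction.

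\textbf{Separately, your condition (ii) is wrong in general.} The vertices of $B$ isolated in every $G_i$ are the vertices outside $\bigcup_i V(G_i)$ together with the one-vertex components $S_1$ or $K_1$. Leaves of stars are not among them, since they lie in non-trivial components and are already matched there. If $\bigcup_i V(G_i)=V(B)$, exactly $k$ such vertices remain. Then $K$ must contain exactly $k$ vertices isolated in every $G_i$, not infinitely many. So the number of isolated-vertex tasks must match the number of such vertices in $B$.
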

Let $\SalephNull$ be the star with infinitely many leaves. By using methods from Bowler, Emde and Gut \cite{Bowler_2023}, one can show that Maker has a winning strategy in $\text{MB}_{\text{pat}}(\SalephNull)$. What about $\text{MB}_{\text{pat}}(\SalephNull \sqcup \SalephNull)$? Let $a$ and $b$ be the vertices of infinite degree of $\SalephNull \sqcup \SalephNull$. Then, Breaker has a simple winning strategy: For each vertex $v \in B \setminus \{a,b\}$, Breaker claims the edge $vb$ or $va$ immediately after Maker has claimed the edge $va$ or $vb$, respectively.
Since Maker does not have a winning strategy in the pattern preserving game, we investigate how many disjoint copies of $\SalephNull$ are needed in $B$, such that Maker can claim a copy $K \subseteq B$ of $\KalephNull$ containing two (or more) copies of $\SalephNull$. From \Cref{thm:(part)pattStar}, it will follow that 3000 copies of $\SalephNull$ suffice in order for Maker to claim a $\KalephNull$ that contains two copies of $\SalephNull$.\footnote{A more careful analysis using the ideas of this paper shows that the actual minimal number of stars that we need is six.}
Understanding how many copies of $\SalephNull$ are needed for Maker to win will be essential for our results on $\MBcolorGraphs{k}$. To make such games precise, we use the following definition.

Suppose, we are given a graph $H$ and integers $k, \ell, C \in \N$. Let $G_1, \dots, G_k \subseteq B$ be pairwise vertex disjoint graphs with each $G_i$ consisting of $C$ pairwise vertex disjoint copies of $H$.
In the \emph{partially pattern preserving game} $\MBpartpattern{k}{H}{C}{\ell}$, it is Maker's goal to claim a copy $K \subseteq B$ of $\KalephNull$, such that $G_i \cap K$ contains at least $\ell$ copies of $H$.
We denote with $\MBpartpatternNumber{k}{H}{\ell}$ the minimal integer $a$, such that Maker has a winning strategy in $\MBpartpattern{k}{H}{a}{\ell}$, if it exists. Otherwise, we set $\MBpartpatternNumber{k}{H}{\ell} = \infty$.

Above, we have stated that $2< \MBpartpatternNumber{1}{\SalephNull}{2} < 3000$. With the methods developed in this paper, we can generalize this as follows.
\begin{theorem}
    \label{thm:partpattGame}
    For $k, \ell \in \N$, we have $2^{(1 - o(1)) \frac{k \ell}{2}} < \MBpartpatternNumber{k}{\SalephNull}{\ell} \leq 2^{2^{(1+o(1))k \ell}}$.
\end{theorem}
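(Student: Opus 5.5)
For the lower bound I would give Breaker a strategy that lives entirely on the finite board of edges between the $kC$ star centers. Write $c^i_1,\dots,c^i_C$ for the centers of the $C$ copies of $\SalephNull$ in $G_i$. Suppose Maker wins with $K$. Then for each $i$, $G_i\cap K$ contains $\ell$ infinite stars. Each of these stars has its center in $K$: a vertex of infinite degree in $G_i$ must be one of the $c^i_j$, since leaves have degree one. Since $K$ is complete and all of its edges are Maker's, Maker has claimed every edge of a clique $T$ on the centers with $|T\cap\{c^i_1,\dots,c^i_C\}|\ge \ell$ for every $i$.

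So it suffices for Breaker to win the finite Maker-Breaker game on $K_{kC}$ whose winning sets are the cliques of this "type" with exactly $\ell$ vertices from each class. Breaker answers every Maker move anywhere in $B$ by an edge inside this $K_{kC}$, and claims an arbitrary edge once it is exhausted; extra Breaker edges never hurt him. The number of winning sets is $N=\binom{C}{\ell}^k\le C^{k\ell}$, and each has $\binom{k\ell}{2}$ edges. By the Erdős–Selfridge criterion (in the version where Maker moves first), Breaker wins if $N<2^{\binom{k\ell}{2}-1}$. This holds as soon as $C^{k\ell}<2^{k\ell(k\ell-1)/2-1}$, i.e. for $C\le 2^{(k\ell-1)/2-o(1)}=2^{(1-o(1))k\ell/2}$. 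This gives the lower bound, and it also recovers $\MBpartpatternNumber{1}{\SalephNull}{2}>2$ in spirit.

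For the upper bound I would invoke \Cref{thm:(part)pattStar}, which the introduction says implies the $3000$-star statement. The goal is to show that its hypotheses hold with $C=2^{2^{(1+o(1))k\ell}}$. If I had to build the Maker strategy directly, I would run it in two interleaved parts.

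\emph{Part (a): the center clique.} Maker wins a clique game among the centers. Repeated halving, where each new Maker edge keeps the larger of the Breaker-/Maker-neighbourhoods, as in the Ramsey-type proof of $\text{MB}(K_n,K_q)$, lets Maker get a typed clique of size $k\ell$. This needs $C$ only singly exponential in $k\ell$.

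\emph{Part (b): infinitely many common leaves.} Maker must also secure infinitely many leaves attached to her chosen centers. She does this in stages, following the \cite{Bowler_2023} method for $\text{MB}(\KalephNull,\KalephNull)$. For each new leaf vertex $v$ of some star, Maker claims edges from $v$ to currently surviving candidate centers, and Breaker can delete about half of them per vertex. Maker only commits to a set $T$ after pigeonholing: among the finitely many possible sets of surviving centers, one must recur for infinitely many $v$. A leaf of star $c$ joined to all of $T$ then counts for the star at $c$.

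The double exponential comes from this combination. Each of the roughly $k\ell$ commitment steps halves the candidate family, and the pigeonhole ranges over subsets of the candidates, which costs an exponent. So I would aim for $\log_2\log_2 C\approx k\ell$.

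The main obstacle is part (b): making the leaf-attaching strategy compatible with Maker's growing clique. Every later vertex of $K$ must be Maker-joined to all earlier vertices, not just to the centers. So I would embed the center-selection inside the general $\KalephNull$-building strategy, maintaining at each step a finite Maker clique together with a large pool of vertices still fully available to it. Tracking the size of the pool of surviving center-candidates through these stages is where the $2^{2^{(1+o(1))k\ell}}$ bound has to be verified.
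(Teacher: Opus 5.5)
Your lower bound is correct, and it is more direct than the paper's. The paper first gets $\MBpartpatternNumber{1}{\SalephNull}{k\ell}>\frac{1}{e}2^{\frac{k\ell}{2}-1}$ from Breaker's untyped clique game $\text{MB}(n,q)$ on the star centres (the Breaker half of \Cref{thm:kColoring}, via \Cref{lem:auxPosGame}). It then transfers this to $(k,\ell)$ with \Cref{lem:PartPatt:LowerAndUpper}, losing a factor $k$. You instead apply Erd\H{o}s--Selfridge directly to the typed $k\ell$-cliques. Your aside that this recovers $\MBpartpatternNumber{1}{\SalephNull}{2}>2$ is false: for $k\ell=2$ the winning sets are single edges and $N<2^{0}$ never holds, so that bound needs the leaf pairing from the introduction.

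The genuine gap is the upper bound, which you never derive. \Cref{thm:(part)pattStar} puts only \emph{one} star of each $G_i$ into $K$, so checking its hypotheses for some $C$ does not give $\ell$ stars per class. The missing step is the splitting in \Cref{lem:PartPatt:LowerAndUpper}:
\begin{itemize}
\item divide the $\ell C'$ stars of each $G_i$ into $\ell$ blocks of $C'$ stars, giving $k\ell$ pairwise vertex disjoint copies of $G_{\text{inf}}^{C'}$;
\item apply \Cref{thm:(part)pattStar} with $s=k\ell$ and $t=0$.
\end{itemize}
This yields $\MBpartpatternNumber{k}{\SalephNull}{\ell}\le \ell C'$ with $C'=3C(s+1)^{Cs+1}$ and $C=\lceil s^2 2^{s-3}\rceil$. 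Then $\log_2 C' = O\bigl(Cs\log_2(s+1)\bigr)=2^{s+O(\log s)}$, which together with the factor $\ell$ is the claimed $2^{2^{(1+o(1))k\ell}}$.

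The direct strategy you sketch instead would not work as stated, and you yourself leave its main step open.
\begin{itemize}
\item Part (a), securing one typed $k\ell$-clique on the centres, buys nothing. Once Maker is committed to its vertex set, take two of its centres $c_1,c_2$: Breaker answers every Maker edge $vc_1$ by $vc_2$ and vice versa, so no further vertex can be joined to both. Maker has to keep many candidate centre sets alive. In the paper this is a Maker-claimed $K_C^s$ with $C\approx s^2 2^{s-3}$ per class, a size forced by \Cref{thm:Beck02} in the leaf stage. In the paper's argument the double exponential enters when this $K_C^s$ is built inside $K_{C'}^s$ via \Cref{thm:lucagame}, not from halving in the leaf stage.
\item In part (b), pigeonholing a recurring set of surviving centres over infinitely many leaves $v$ does not make those leaves pairwise Maker-adjacent. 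Nor does it give infinitely many leaves of each chosen star. Handling this is the content of $\MBfin{C}{s}{t}$ and \Cref{thm:eq:Maker}: trees whose comparability graphs Maker claims, wish functions, and K\"onig's lemma.
\end{itemize}
All of that machinery is already packaged in \Cref{thm:(part)pattStar}, so the reduction and computation above are all you need.
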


We can even combine the pattern preserving game and the partially pattern preserving game.
Even though the following result seems very concrete at first, it will turn out to be central for most of Maker's winning strategies presented in this paper.
Let $G_{\text{inf}}^C$, be the graph consisting of $C$ pairwise vertex disjoint copies of $\SalephNull$.
\begin{theorem}
    \label{thm:(part)pattStar}
    Given $s,t \in \N_0$ and pairwise vertex disjoint graphs $G_1, \dots, G_{s+t} \subseteq B$, such that for $C = \max\{\lceil s^2 \cdot 2^{s-3} \rceil, 2^{3s+1} \cdot t\}$ and $C' \geq 3C (s+1)^{Cs + 1}$, we have for each $i \in [s]$ that $G_i$ is a copy of $G_{\text{inf}}^{C'}$ and each of $G_{s+1}, \dots, G_{s+t}$ is a copy of $G_{\text{star}}$. Maker has a strategy to claim a copy $K$ of $\KalephNull$, such that $G_i \cap K$ induces a copy of $\SalephNull$ for each $i \in [s]$ and $G_j \cap K$ induces a copy of $G_{\text{star}}$ for each $j \in \{s+1, \dots, s+t\}$.
\end{theorem}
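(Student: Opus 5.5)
We build $K$ on vertices $w_1, w_2, \dots$ in countably many stages, as in the proof that Maker wins $\text{MB}(\KalephNull,\KalephNull)$ \cite{Bowler_2023}. Maker keeps a finite set $W_n=\{w_1,\dots,w_n\}$ of committed vertices, all pairwise joined by Maker edges, and a collection of still available vertices that have no Breaker edge to $W_n$. The $s$ star graphs and the $t$ copies of $G_{\text{star}}$ are handled differently.

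\textbf{The stars $G_1,\dots,G_s$.} Here Maker does not fix the centre in advance. She keeps, for each $i\in[s]$, a pool of candidate centres: $C$ of the $C'$ centres of $G_i$, chosen when the game starts. A candidate stays alive while it is joined by Maker edges to all of $W_n$. Each new vertex added to $K$ must be joined to the surviving candidates. For $i\in[s]$ it must also be a leaf in $G_i$ of some surviving candidate, so that this candidate gains a neighbour in $G_i\cap K$. Breaker can kill a candidate with one edge. We therefore want to show that along a suitable subsequence some candidate for each $i$ survives forever and collects infinitely many leaves. Then restricting $K$ to the survivors and their chosen leaves gives exactly one $\SalephNull$ per $G_i$.

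\textbf{Why $C'$ is large enough.} The factor $(s+1)^{Cs+1}$ comes from a pigeonhole over ``types''. When Maker tries to extend by a new leaf, Breaker's answers only matter through which of the $\le Cs$ live candidates he has attacked and in which of $s+1$ ways. Maker therefore first selects, among the $C'$ centres of each $G_i$, a large family whose local Breaker-type pattern agrees. We get the bound $C'\ge 3C(s+1)^{Cs+1}$ by iterating this pigeonhole once per centre, leaving $3C$ useful centres per colour.

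\textbf{The copies of $G_{\text{star}}$.} For $j>s$, Maker must add arbitrarily large finite stars $S_m\subseteq G_j$ in full, with each star as a complete subgraph of $K$. She takes $G_j$'s stars one at a time. She claims all edges from a new star's vertices to $W_n$ and among themselves, and answers each Breaker threat by discarding that star and moving to a fresh one. Since $G_j$ contains stars of every size, she can try again. The count $2^{3s+1}t$ is chosen so that while she is building a finite star she can afford the doubling cost of protecting the $s$ live star candidates. That cost is the factor $2^{3s}$, from the pairing strategy used in the $\KalephNull$ game.

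\textbf{Interleaving and the main obstacle.} Maker schedules stages round-robin: stage $n$ serves target $n \bmod (s+t)$. She uses the pairing/doubling trick: whenever Breaker claims an edge at a vertex, Maker responds at the same vertex. This keeps the set of ``dangerous'' vertices at most a fixed multiple of the vertices in play, which is where the powers of $2$ in $C$ come from. Finally, each $G_i\cap K$ must induce exactly the intended pattern, with no stray edges. This is automatic because the $G_i$ are vertex-disjoint and we discard the unused candidates. The main obstacle is the survival argument for the star candidates: Breaker can concentrate on one colour, and we must show that some candidate in each colour keeps getting new leaves despite this. I would first prove a potential-function (Erdős–Selfridge-type) bound. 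It weights each live candidate in colour $i$ by $2^{-(\text{threats})}$ and shows that the total weight never drops below $1$ once $C\ge s^2 2^{s-3}$. A compactness/König argument would then turn this into a single infinitely-surviving candidate per colour.
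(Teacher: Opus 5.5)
\textbf{There is a genuine gap: the proposed clique-building framework does not work.} Once a clique $W_n$ with $n\ge 2$ is committed, any fresh vertex $v$ you try to add is spoiled by Breaker's answer to your first edge $vw_i$: he claims some $vw_j$ with $j\ne i$. Keeping a reservoir of vertices without Breaker edges into $W_n$ does not help, because he reacts only after you have chosen $v$. Nothing in your proposal produces vertices already joined to most of $W_n$ in a way Breaker cannot cut.

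The mechanism that actually builds a $\KalephNull$ (\Cref{thm:lucagame}, and \Cref{thm:eq:Maker} here) commits nothing. Maker grows rooted trees of bounded branching and claims their comparability graphs. She joins each new vertex to a whole rooted path using the pairing of \Cref{lem:connVertexToHalfTrees}, so Breaker can only cut away half of each branching. The clique is read off only at the end, as a ray given by K\"onig's lemma. In that setting round-robin scheduling is also unavailable, because the colour the next vertex needs depends on where it gets attached. Wish functions resolve this: each vertex wishes for a colour wished for by at least two (centre colours) or four (colours of the $G_{s+j}$) of its $\tau=s+3t+1$ children.

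\textbf{Both of your special mechanisms also fail.} First, if every new vertex must be joined to all surviving candidate centres, Breaker answers the first edge at each new vertex by claiming an edge from it to a surviving candidate of one fixed class. So after fewer than $C$ additions nothing more can be added, and no weighting changes this. The unspecified Erd\H{o}s--Selfridge potential cannot repair it.

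In the paper, centres are never protected individually. Maker first claims a complete $s$-partite $K_C^s$ among the centres by \Cref{thm:lucagame}. That construction may use up $3C(s+1)^{Cs+1}$ centres per class, which is the true origin of the bound on $C'$, not a pigeonhole over Breaker types. A new vertex then only needs to be joined to one transversal $s$-set $e$, and each $e$ carries its own family of trees. The $s$-sets are coloured by what their trees' roots wish for. Maker needs that for every such colouring some colour class is a hypergraph on which she wins the vertex-claiming game, played through the edges from a fresh vertex to the centres. This is the finite game $\MBfin{C'}{s}{t}$ of \Cref{thm:MakerWinFinGame}, and $C\ge s^2 2^{s-3}$ enters through Beck's criterion (\Cref{thm:Beck02}). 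The final centres are simply the $s$-set $e^*$ whose tree family is infinite.

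Second, ``claim a whole finite star and restart whenever Breaker interferes'' never succeeds, since one Breaker edge spoils each attempt. Instead, the root of a very large star is first joined to a set $X$ of centres on which Maker wins the Breaker-start game $\MBauxB{\cdot}{1}$. The bound $C\ge 2^{3s+1}t$ comes from \Cref{lem:Aux2Game}, which finds four disjoint copies of $T_4^{(s)}$ in a colour class with at least $C^s/(2t)$ edges; it is not a doubling cost. The star's leaves are then fed one at a time into balanced auxiliary trees, and $\tau$-ary subtrees of height $n$ are extracted. Only a bounded part of each star lies on the final ray, and distinct heights make the resulting stars increase.
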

In \Cref{ch:FinInfEquiv}, we are going to define a finite game for which we show that it is equivalent to the game presented in \Cref{thm:(part)pattStar}. The complexity of this finite characterization motivates why we should not expect a simple characterization of which player wins in $\MBcolorGraphs{k}$ for large $k$.

In order to state our main result regarding $\MBcolorGraphs{k}$, we need to distinguish a few types of graph.
Observe that for every graph $G$ with infinitely many edges exactly one of the following holds.
\begin{enumerate}
    \item Either there exists a sequence of distinct vertices $(v_j)_{j \in \N} \subseteq V(G)$, such that $d(v_j) \geq j$,
    \item or for some $C \in \N_0$, there exists a vertex set $S$ of size $C$, such that $\Delta(G - S) \in \N$ and $d_G(v) = \alephNull$ for all $v \in S$.
\end{enumerate}
If $G$ is of type (i), we call $G$ \emph{ascending} and if $G$ is of type (ii), we say $G$ \emph{is of order $C$}. We refer to an infinite graph $G$ of order 0 as \emph{bounded}. For a family of infinite graphs $G_1, \dots, G_k \subseteq B$ and $A \subseteq [k]$, let $m(A)$ be the size of a minimal vertex set $\{v_1, \dots, v_{m(A)}\}$, such that for each $j \in A$, there exists $v \in \{v_1, \dots, v_{m(A)}\}$ with $d_{G_j}(v) = \alephNull$. If there exists no such set, we let $m(A) = \infty$. With this preliminary work, we can state the main theorem.

\begin{theorem}
    \label{thm:kColoring}
    Given $s,t \in \N_0$ and $G_1, \dots, G_{s+t} \subseteq B$, such that $G_1, \dots, G_s$ are of order $C_1, \dots, C_s$ and $G_{s+1}, \dots, G_{s+t}$ are ascending, respectively. If
    \begin{align*}
        C_1, \dots, C_s \geq \max\{   2^{t \cdot 2^{(1+o_s(1))3s}}, 2^{2^{(1+o_s(1))s}}  \}
    \end{align*}
    Maker has a winning strategy in $\MBcolorGraphs{s+t}$. For the Euler number $e$, if there exists a non-empty set $A \subseteq [s]$, such that for $m(A)$ as above
    \begin{align*}
        \sum_{j \in A} C_j \leq  \frac{m(A)}{e} \cdot 2^{\frac{m(A)}{2}-1},
    \end{align*}
    then Breaker has a winning strategy in $\MBcolorGraphs{s+t}$. Moreover, Breaker's strategy ensures that $G' = \bigcup_{j \in A} G_j \cap K'$ is a graph of order less than $m(A)$ for each infinite clique $K' \subseteq B$ for which Maker has claimed $E(K')$.
\end{theorem}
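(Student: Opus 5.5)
The statement has two parts, and I would prove them separately: the Maker side reduces to \Cref{thm:(part)pattStar}, and the Breaker side is a probabilistic or weight-function argument.

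\textbf{Maker's side.}

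First I find vertex-disjoint substructures to which \Cref{thm:(part)pattStar} applies.

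For each ascending $G_{s+j}$, pass to a sequence of vertices $v_1,v_2,\dots$ with rapidly increasing degrees. Greedily choose pairwise disjoint stars $S_m\subseteq G_{s+j}$, where $S_m$ has $m$ vertices. This is possible because each new centre of large degree still has many neighbours outside the finitely many vertices used so far. The result is a copy of $G_{\text{star}}$ inside $G_{s+j}$.

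For each $G_i$ of order $C_i$, the $C_i$ vertices of infinite degree give $C_i$ infinite stars. I make these stars vertex-disjoint by taking their leaves one at a time from the infinite neighbourhoods. These stars have to be disjoint from the stars chosen for the other graphs. The step I expect to be the main obstacle is that the $G_i$ may share centres: a vertex $v$ could have infinite degree in several $G_i$.

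To handle this, I would show the following. If every $C_i$ is at least $C'$ times the number of graphs, then we can assign to each $i\in[s]$ a set of $C'$ centres that are distinct across different $i$. Since each $G_i$ has at least $C_i$ centres, a Hall-type greedy selection works. The ascending graphs can be handled in the same way, because every finite set of vertices can be avoided.

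Now I interleave the choices of leaves (and of stars for the ascending graphs) in a round-robin enumeration. This makes everything pairwise vertex disjoint, so $G_1,\dots,G_s$ contain disjoint copies of $G_{\text{inf}}^{C'}$ and $G_{s+1},\dots,G_{s+t}$ contain disjoint copies of $G_{\text{star}}$.

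Next I check the bound. With $C=\max\{\lceil s^2 2^{s-3}\rceil,2^{3s+1}t\}$ we have $C'=3C(s+1)^{Cs+1}=2^{(1+o(1))Cs\log s}$. This is dominated by the maximum in the statement, which absorbs the extra factor of $s$ from the distinctness requirement. Applying \Cref{thm:(part)pattStar} then gives Maker a $\KalephNull$ that meets each $G_i$ in a copy of $\SalephNull$, hence in infinitely many edges, and meets each ascending $G_{s+j}$ in a copy of $G_{\text{star}}$, which also has infinitely many edges. The edges Maker claims outside these substructures are irrelevant to the winning condition.

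\textbf{Breaker's side.}

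Let $m=m(A)$ and write $G'=\bigcup_{j\in A}G_j$. Let $S$ be the set of vertices of infinite degree in some $G_j$ with $j\in A$; it has size at most $\sum_{j\in A}C_j$. Each $G_j$ has only finitely many high-degree vertices, so apart from finitely many edges, every edge of $G_j$ in Maker's clique must be incident to $S$. Maker therefore wins only if her clique $K'$ contains infinitely many $G_j$-neighbours of some centres. These centres must cover every $j\in A$, so she needs at least $m$ centres from $S$ that are adjacent in $K'$ to infinitely many suitable leaves.

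Breaker's aim is to prevent any $m$-subset $T\subseteq S$ that covers $A$ from having infinitely many common neighbours in Maker's graph $K'$. I would use an Erd\H{o}s--Selfridge weight strategy played one new vertex at a time. For each vertex $x$ outside $S$, whenever Maker claims an edge $xs$ with $s\in S$, Breaker answers with an edge $xs'$ chosen to minimise the potential
\[
\sum_{T}2^{-\text{(missing Maker edges from $x$ to $T$)}}.
\]
This works like the $\SalephNull\sqcup\SalephNull$ example. A covering $T$ fully joined to $x$ would need $m$ Maker edges, while Breaker gets roughly half of the edges at $x$. Each dangerous configuration needs $m$ vertices of $S$ with the relevant clique edges among them as well, about $\binom m2$ further edges. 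The number of candidate sets is at most $\binom{|S|}{m}\le(e|S|/m)^m$. Comparing this with $2^{m(m-1)/2}$, the hypothesis $|S|\le\frac me2^{m/2-1}$ makes the initial potential below $1$. So for all but finitely many $x$, no covering $m$-set can be fully Maker-joined within a clique. Then any Maker clique $K'$ sees fewer than $m$ centres with infinite $G'$-degree, which means $G'\cap K'$ has order less than $m$, as the statement claims. Making this potential argument rigorous across infinitely many vertices, by restarting the potential for each new vertex, is the delicate step.
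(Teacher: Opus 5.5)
Your Maker half is essentially the paper's proof of \Cref{lem:kColMaker}. You fix pairwise disjoint sets of about $\min_i C_i/s$ infinite-degree centres, extend greedily to pairwise vertex disjoint copies of $G_{\text{inf}}^{C'}$ and $G_{\text{star}}$, and apply \Cref{thm:(part)pattStar}; your check that $s\cdot 3C(s+1)^{Cs+1}$ fits under the stated bound is fine. The Breaker half, however, has a genuine gap.

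\emph{The bounded part is never handled.} You claim that, apart from finitely many edges, every edge of $G_j$ in Maker's clique is incident to $S$. This is false. Being of order $C_j$ only says that $G_j-S$ has bounded degree; it can still have infinitely many edges. Take $G_j$ to be a few infinite stars plus an infinite matching. Or take $G_j$ bounded, so $S=\emptyset$ and $m(A)=\infty$; this is exactly the case used to prove \Cref{thm:2ColoringGraph}. Maker can aim for infinitely many edges of the bounded graph $G=\bigl(\bigcup_{j\in A}G_j\bigr)-S$, and your strategy never answers such moves.

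Even granting the rest, you only obtain the \emph{moreover} part, $|V(K')\cap S|<m(A)$, and this does not yet make Breaker win. By minimality of $m(A)$ it gives some $j\in A$ with no vertex of infinite $G_j$-degree in $K'$. Then all but finitely many edges of $G_j\cap K'$ lie in $G$, and you still must show that $K'\cap G$ is finite. The paper does this with a pairing strategy:
\begin{enumerate}
\item properly colour $L(G)^2$ with finitely many colours, which is possible since $\Delta(G)<\aleph_0$;
\item refine the colouring using a fixed orientation and how each edge meets the unique edge of each other colour class;
\item pair $ux$ with $vw$ whenever $uv$ and $wx$ have the same refined colour and are oriented $u\to v$, $w\to x$.
\end{enumerate}
After checking that every edge lies in at most one pair, two same-class edges in Maker's clique would force her to own both edges of a pair. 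So each of the finitely many classes contributes only finitely many edges.

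\emph{The weight argument for the centres does not work as set up.} The $\binom m2$ edges inside $S$ form one finite game shared by all $x$, so they cannot be restarted vertex by vertex. The star part at a single $x$ is hopeless for Breaker on its own: if $|S|\ge 2m$, Maker simply gets $m$ of the edges at $x$.

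None of the per-vertex play is needed. The hypothesis $|S|\le\frac{m}{e}2^{m/2-1}$ is exactly the Erd\H{o}s--Selfridge condition of \Cref{lem:auxPosGame}. So Breaker plays $\text{MB}(|S|,m)$ on the finite clique spanned by $S$, and since $K'$ is a clique, this alone gives $|V(K')\cap S|<m$.

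The argument must rule out all $m$-subsets of $S$, not only covering ones. A covering set inside $K'$ could have more than $m$ elements without containing a covering $m$-subset.
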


\section{Definitions and Notation}

All (hyper)graphs considered in this paper are simple. 
We refer to Maker as \textit{she} and Breaker as \textit{he} and say that a vertex $x$ is \emph{fresh}, if no player has claimed an edge incident to $x$ yet.
In order to be concise, we will not give a formal definition of \emph{strategy} here. Instead, we refer the reader to \cite[Appendix C]{CombGamesBeck}. 
We define $[k] \coloneqq \{1, \dots, k\}$ and occasionally use that $k \coloneqq \{0, \dots, k-1\}$ for some $k \in \N$. In this paper a coloring will just be a function $c$. Whenever $c$ has additional properties, this will be stated explicitly.
For a finite family of graphs $\mathcal{G}$, we sometimes write $V(\mathcal{G})$ in order to denote $\bigcup_{G \in \mathcal{G}} V(G)$.
We define $E(A,B) \coloneqq \{ab \ | \ a \in A, b \in B \}$ for $A,B \subseteq V(G)$. For $x \in V(G)$ and $H$ a subgraph of $G$, we sometimes say that Maker connects $x$ to $H$. By that we mean that Maker claims all edges of $E(\{x\}, H)$. Similarly for Breaker.

\begin{definition}
    \label{def:linegraph}
    For a graph $G$, we define the \emph{line graph} $L(G)$ of $G$ by setting
    \begin{enumerate}
        \item $V(L(G)) = E(G)$,
        \item $E(L(G)) = \{e e'\ | \ e,e' \in E(G), \ e \cap e' \neq \emptyset \}$.
    \end{enumerate}
\end{definition}
For a graph $G$ and two vertices $u,v \in V(G)$, let $dist_G(u,v)$ be the length of a shortest path between $u$ and $v$ in $G$.
\begin{definition}
    \label{def:distGraph}
     For $k \in \N$, we define $G^k$ by setting 
     \begin{enumerate}
        \item $V(G^k) = V(G)$,
        \item $E(G^k) = \{uv \ | \ u,v \in V(G), \ 1 \leq dist_G(u,v) \leq k\}$.
     \end{enumerate}
\end{definition}

\begin{definition}
    \label{def:KCs}
    For some $C,s \in \N$, let $K_C^s$ be the complete $s$-partite graph with vertex set $\{r_1, \dots, r_{Cs}\}$, such that every partition class is of size $C$. For a set of edges $A \subseteq E(K_C^s)$, we define an $s$-uniform hypergraph $K_C^{(s)}(A)$ by putting $e \in K_C^{(s)}(A)$, if $e^{(2)} \subseteq A$ and set $K_C^{(s)} \coloneqq K_C^{(s)}(E(K_C^s))$.
\end{definition}
We define a \emph{ray} $R$ as an infinite graph with $V(R) \coloneqq \{r_i\}_{i \in \N}$ and $E(R) \coloneqq \{r_i, r_{i+1}\}_{i \in \N}$ with all $r_i$ distinct.
Let $T$ be a tree rooted at $r$ and let $x \in V(T)$. We denote $P_x$ as the unique rooted path from $x$ to $r$ in $T$. For $x \neq r$, we call the unique vertex in $N(x) \cap P_x$ the \emph{predecessor} of $x$ and denote it with $p(x)$.
\begin{definition}
    \label{def:wrg:orderontree}
    Given a tree $T$ rooted at $r$. Define a partial order $(V(T), \leq)$ by letting
    \begin{align*}
        x \leq y  :\Longleftrightarrow x \in P_y
    \end{align*}
    for $x,y \in T$. We will call this ordering the \textit{tree order} of $T$.
\end{definition}
Moreover, we define ${\lfloor x \rfloor}_T \coloneqq \{y \in V(T) \ | \ x \leq y\}$ for $x \in T$ and ${\lceil x \rceil}_T$ similarly. We will often omit the $T$, if it is clear from the context.
For a vertex $x$ of a rooted tree $T$, we call elements of $\lfloor x \rfloor \cap N(x)$ the $\mathit{children}$ of $x$ and define $ch \colon V(T) \to \N$ as the function, which returns the number of children of a vertex.
We call a tree $T$ \emph{$k$-ary}, if each vertex has exactly $k$ children or is a leaf and we will often refer to a $2$-ary tree as \emph{binary tree}. For a tree $T$ rooted at $r$ and $k \in \N$, we say that $T' \subseteq T$ is a \emph{maximal $k$-ary subtree} of $T$, if $r \in V(T')$ and $|V(T')|$ is maximal, such that $T'$ is $k$-ary.
\begin{definition}
    For $k,\ell \in \N$, we define the rooted tree $T(k,\ell)$ to be the unique $k$-ary tree, such that each rooted path contains $\ell$ edges.
\end{definition}
Let $T_k^{(\ell+1)}$ be the $(\ell+1)$-uniform hypergraph with vertex set $V(T(k,\ell))$ and the edge set given by the maximal rooted paths in $T(k,\ell)$.

\begin{definition}
    Given a rooted tree $T$, we define the \emph{comparability graph} $C_T$ of $T$ as the graph with the following two properties:
    \begin{enumerate}
        \item  $V(C_T) \coloneqq V(T)$;
        \item  $E(C_T) \coloneqq \{ \{x,y\} \in V(C_T)^{(2)} \ | \ y \in V(P_x) \setminus \{x\} \} $;
    \end{enumerate}
\end{definition}

\begin{definition}
    For a rooted tree $T$, let $U \colon V(T) \to \N$ be the function, which assigns for a vertex $x$ the number of vertices in its up-closure. Formally, $U(x) = |{\lfloor x \rfloor}_T|$.
\end{definition}
For a rooted tree $T$ and $x \in T$, let $h_T(x) \coloneqq |E(P_x)|$ be the \emph{height} of $x$. We sometimes omit the $T$, if it is clear from context. 
Moreover, we define the height of a tree as $h(T) \coloneqq \sup_{x \in V(T)}\{h_T(x)\}$ and for $n \in \N$, we set 
\begin{align*}
    U^*_n(x) \coloneqq \{v \in {\lfloor x \rfloor}_T \ | \ h(v) - h(x) \leq n\}.
\end{align*}

\begin{definition}
    \label{def:attach}
    For some $m \in \N$, given trees $T^*, T_1, \dots, T_m$ rooted at $r^*, r_1, \dots, r_m$, respectively and $v \in T^*$. Whenever we say, that â€œwe attach $T_1, \dots, T_m$ to $v$ (at their roots)", we mean that we will define a tree $T$ rooted at $r^*$, such that
    \begin{enumerate}
        \item  $V(T) \coloneqq V(T^*) \cup V(T_1) \cup \ldots \cup V(T_m)$,
        \item  $E(T) \coloneqq  E(T^*) \cup E(T_1) \cup \ldots \cup E(T_m) \cup \{v r_1, \dots, v r_m\}$.
    \end{enumerate}
    
\end{definition}

\section{Preliminaries}
In the proof of our main results, we will essentially connect vertices to subtrees of trees and the following lemma will provide a flexible tool for that.
\begin{lemma}
    \label{lem:connVertexToHalfTrees}
    For $\ell, m \in \N$ and a family of pairwise disjoint $2m$-ary trees $\mathcal{T} = \{T_1, \dots, T_{2 \ell}\}$ in $B$ rooted at $r_1, \dots, r_{2 \ell}$, respectively, suppose there is a vertex $x \in V(B) \setminus V(\mathcal{T})$, such that no edge of $E(\{x\}, V(\mathcal{T}))$ has been chosen by Maker or Breaker.
    If Breaker begins in the Maker-Breaker game on $B$, Maker has a strategy to claim all edges of $E(\{x\}, V(\mathcal{T'}))$, for $\mathcal{T'}$ being a family of $\ell$ maximal $m$-ary subtrees $T'_{j_1}, \dots, T'_{j_\ell}$ of $T_{j_1}, \dots, T_{j_\ell}$, respectively, such that $j_i \in \{2i-1, 2i\}$ for each $i \in [\ell]$. Moreover, Maker can achieve this in $|V(\mathcal{T'})|$ moves.
\end{lemma}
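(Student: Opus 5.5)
We view this as a pairing-type strategy. Pair the trees as $\{T_{2i-1}, T_{2i}\}$ for $i \in [\ell]$; Maker runs $\ell$ independent local games, one per pair, and answers each Breaker move in the pair where it was made. If Breaker claims an edge not of the form $xv$ with $v \in V(\mathcal{T})$, or one in a pair that is already settled, Maker plays an arbitrary needed edge in some unsettled pair; extra Maker edges never hurt. It is therefore enough to describe Maker's strategy for a single pair $T_1, T_2$ of $2m$-ary trees. There Breaker begins, and Maker must claim $xv$ for every vertex $v$ of a maximal $m$-ary subtree of $T_1$ or of $T_2$, using exactly as many moves as that subtree has vertices.

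In a single pair, Maker first decides which tree to use. After Breaker's first move, at most one of the two trees contains a Breaker edge $xv$. Maker picks a tree $T_j$ whose root edge $xr_j$ is still free; if Breaker touched neither root edge, she picks a tree Breaker did not touch. She claims $xr_j$ and then builds the $m$-ary subtree top-down. She maintains a current subtree $T'$ containing the root, all of whose vertices are joined to $x$ by her edges. For each vertex $u \in T'$ she keeps a count of children $w$ with $xw$ claimed by her. Every Breaker edge $xw$ with $w \in T_j$ "kills" $w$ together with the whole up-closure $\lfloor w \rfloor$; such vertices are simply never used. Whenever Breaker kills a child of some $u \in T'$, Maker responds by claiming $xw'$ for another child $w'$ of $u$ that is still alive. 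For this she keeps, locally at $u$, a pairing of the $2m$ children into $m$ pairs: if Breaker takes one child of a pair, Maker takes the partner. When Breaker plays in the other tree, or inside an already dead region, Maker extends $T'$ by one arbitrary alive child of some vertex of $T'$ that still has fewer than $m$ chosen children, processing vertices in breadth-first order.

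The pairing gives the invariant that every vertex of $T'$ keeps, in each of its $m$ child-pairs, at least one alive child available to Maker. Breaker's single move kills at most one child of one vertex, and Maker immediately takes that child's partner. Hence each vertex of $T'$ eventually receives exactly $m$ children, one from each pair, and the process ends with a maximal $m$-ary subtree $T'_j$ that Maker has fully connected to $x$. Maker claims exactly one edge per vertex of $T'_j$, so this single game takes $|V(T'_j)|$ moves. Over all pairs this totals $|V(\mathcal{T}')|$. Since the trees are finite (they are $2m$-ary trees with leaves, implicitly of bounded height), each pair game terminates.

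The main obstacle is the initial move. Breaker's first edge might be $xr_1$; if it sits deeper, Maker's chosen root must avoid it, but a deep kill only removes one child-pair option, which the pairing already tolerates. The real case to check is when Breaker's first edge hits a non-root vertex $w$ of a tree before Maker has claimed that root. In that case Maker should select the other tree, or she can accept it, because $w$'s pair partner remains alive. The careful point is to make the local pairing argument handle Breaker's "free" first move together with the interleaving of $\ell$ games, so that Breaker never gets two uncountered moves in one pair. That is exactly why the $2m$ versus $m$ slack and the choice $j_i \in \{2i-1,2i\}$ are needed.
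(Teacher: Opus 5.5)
There is a genuine gap: your strategy never says what Maker does when Breaker claims $xw$ for an alive vertex $w$ whose predecessor $p(w)$ is not yet in your current tree $T'$. The natural reading is to record the kill and then extend $T'$ as usual, and that breaks your invariant. The sentence ``Breaker's single move kills at most one child of one vertex, and Maker immediately takes that child's partner'' is true only when that vertex already lies in $T'$. A deep kill goes unanswered, so when $p(w)$ later enters $T'$ its pair $\{w,w'\}$ is already half-dead. Breaker then claims $xw'$ and you cannot reply.

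Concretely, take $m=\ell=1$ and binary trees of height $2$. Let $T_1$ have root $r_1$, children $a,a'$, and let $b,b'$ be the children of $a$. Breaker opens with an edge away from $x$, and you choose $T_1$ and claim $xr_1$. Breaker claims $xb$, and you extend by the ``arbitrary alive child'' $a$. Breaker claims $xb'$. Now $a\in T'$ has no alive child, so $T'$ can never become a maximal $1$-ary subtree. Switching to $a'$ costs an extra move, which violates the count $|V(\mathcal{T}')|$. This is exactly the ``two uncountered moves in one pair'' you flag at the end without resolving. Your rule for Breaker's first move (take the other root) is a special case of the missing rule, but that rule must be applied at every move, not only the first.

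The missing idea is to answer every Breaker move at the \emph{top} of the tree:
\begin{enumerate}
\item Use one global pairing: pair the roots $r_{2i-1}$ with $r_{2i}$, and pair the $2m$ children of every vertex into $m$ pairs.
\item Keep Maker's claimed set closed under predecessors.
\item When Breaker claims $xz$, let $z_1\in\lceil z\rceil$ be the vertex of minimal height such that Maker has not yet claimed $xz_1$. Maker claims $xz_2$, where $z_2$ is the partner of $z_1$.
\end{enumerate}
This is a legitimate extension, since $p(z_2)=p(z_1)$ is already Maker's, and it abandons the whole subtree $\lfloor z_1\rfloor$, including $z$. In the example above, Maker would answer $xb$ with $xa'$ and finish in three moves.

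The proof then rests on two invariants: Maker holds at most one vertex of each pair, and for every Breaker edge $xy$ some $z\in\lceil y\rceil$ has its partner held by Maker. Together they show that $xz_2$ is still free whenever it is needed, and that Maker never spends a move on a vertex she later discards. At termination, every chosen non-leaf vertex has exactly one chosen child in each of its child-pairs. This gives the maximal $m$-ary subtrees in exactly $|V(\mathcal{T}')|$ moves.
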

\begin{proof}
    We define a pairing of $\mathcal{T}$ as follows. For each $i \in [\ell]$, we pair $r_{2i-1}$ with $r_{2i}$. As long as there exists a vertex with unpaired children $w_1, \dots, w_{2m}$, we pair $w_{2j-1}$ with $w_{2j}$ for each $j \in [m]$. This yields a pairing $\mathcal{P}$ of $V(\mathcal{T})$, such that each vertex is contained in exactly one pair.

    Suppose that for $n \in \N_0$ after her $n^\text{th}$ move, Maker has connected $x$ to (possibly empty) rooted trees $T^{(n)}_{j_1}, \dots, T^{(n)}_{j_\ell}$, such that after her last move
    \begin{enumerate}
        \item Maker has connected $x$ to at most one vertex from each pair of $\mathcal{P}$,
        \item $\sum_{i \in [\ell]} |V(T^{(n)}_{j_i})| = n$,
        \item for each edge $xy$ Breaker has claimed, there is $z \in \lceil y \rceil$, such that Maker has claimed the edge from x to the partner of $z$ in $\mathcal{P}$.
    \end{enumerate}
    As a first case, suppose that Breaker now claims an edge $xz$ in his $(n+1)^\text{st}$ move. Let $z_1 \in \lceil z \rceil$ be the vertex of minimal height for which Maker has not claimed $xz_1$ and suppose further that Maker has also not claimed $xz_2$, where $z_2$ is the partner of $z_1$ in $\mathcal{P}$. Breaker cannot have claimed $xz_2$, since in that case by (iii) there would exist $(z_3, z_4) \in \mathcal{P}$ with $z_3 \in \lceil z_2 \rceil \setminus \{z_2\}$, such that Maker has claimed $xz_4$. Hence, Maker would have claimed both $xz_3$ and $xz_4$, contradicting (i). Therefore, Maker can claim $xz_2$ in her $(n+1)^\text{st}$ move.
    \begin{figure}[H]
        \centering
        \includegraphics[width=0.37\textwidth]{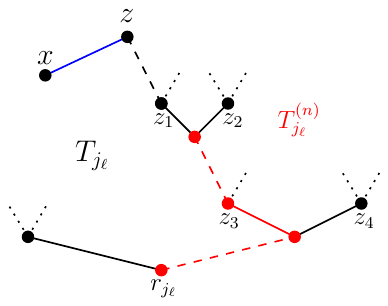}
        \caption{For $m = 1$, if Breaker claims $xz$, we have to verify in the first case that Breaker has not claimed $xz_2$ yet.}
    \end{figure}

    If Breaker claims any edge of $B$ not satisfying the requirements above and if there exists some fresh pair $(z_1,z_2) \in \mathcal{P}$ for which $\{x,  p(z_2)\}$ is claimed by Maker or $z_2$ is a root of some $T_i$ with $i \in [2 \ell]$, Maker claims $xz_2$, if possible.

    In both cases, suppose that $z_2 \in V(T^{(n)}_{j_\ell})$ up to relabeling indices.
    Moreover, we set $T^{(n+1)}_{j_\ell} =T_{j_\ell}\big[V(T^{(n)}_{j_\ell}) \cup \{z_2\}\big]$ and $T^{(n+1)}_{j_i} = T^{(n)}_{j_i}$ for all $i \in [\ell -1]$. Observe that (i)-(iii) hold and start over.

    Now, suppose that the above algorithm has eventually terminated and Maker has claimed $\mathcal{T}' = \{T^{(m)}_{j_1}, \dots, T^{(m)}_{j_\ell}\}$ for some $m \in \N_0$. By (iii), whenever Breaker has claimed either $xr_{2j-1}$ or $xr_{2j}$, Maker must have claimed the other edge, so by (i), Maker has connected $x$ to exactly one vertex from every pair $\{(r_{2j-1}, r_{2j})\}_{j \in [\ell]}$. For each vertex $v$ for which Maker has claimed $xv$, Maker has connected $x$ to at most $m$ of its children by (i).
    Moreover, for each pair of children $(v_1, v_2) \in \mathcal{P}$ for which Breaker has claimed $xv_i$ with $i \in [2]$, Maker must have claimed $xv_{3-i}$, since otherwise Maker has claimed both $xv_3$ and $xv_4$ for some $v_3 \in \lceil v \rceil$ and $(v_3, v_4) \in \mathcal{P}$ by (iii). Therefore, $\mathcal{T}'$ is a family of $\ell$ maximal $m$-ary rooted trees and by (ii), Maker has connected $x$ to $V(\mathcal{T}')$ in exactly $|V(\mathcal{T}')|$ moves.
\end{proof}

\begin{remark}
    \label{rem:connVertexToHalfTrees}
    Observe that, if Maker begins in \Cref{lem:connVertexToHalfTrees}, we get the same conclusion even if we are only given $2 \ell -1$ pairwise disjoint $2m$-ary rooted trees, since Maker can claim $x r_{2 \ell -1}$ in her first move and then play according to the strategy presented in \Cref{lem:connVertexToHalfTrees}.
\end{remark}

The next lemma will be useful in the proof of \Cref{thm:partpattGame}.
\begin{lemma}
    \label{lem:PartPatt:LowerAndUpper}
    We have $\MBpartpatternNumber{k}{H}{\ell t} \leq t \cdot \MBpartpatternNumber{k t}{H}{\ell}$ for each graph $H$ and all $k, \ell, t \in \N$ .
\end{lemma}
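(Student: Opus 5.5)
We prove $\MBpartpatternNumber{k}{H}{\ell t} \leq t \cdot \MBpartpatternNumber{k t}{H}{\ell}$ by a direct simulation argument. If the right-hand side is infinite there is nothing to show, so let $a \coloneqq \MBpartpatternNumber{kt}{H}{\ell} < \infty$. Then Maker has a winning strategy $\sigma$ in $\MBpartpattern{kt}{H}{a}{\ell}$. It suffices to exhibit a winning strategy for Maker in $\MBpartpattern{k}{H}{ta}{\ell t}$, since adding further copies of $H$ can only help Maker: she may simply ignore them.

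The setup is as follows. We are given pairwise vertex disjoint graphs $G_1, \dots, G_k \subseteq B$, each consisting of $ta$ pairwise vertex disjoint copies of $H$. For each $i \in [k]$ we split the $ta$ copies of $H$ in $G_i$ arbitrarily into $t$ groups of $a$ copies each. Call the resulting graphs $G_{i,1}, \dots, G_{i,t}$, with $G_i = G_{i,1} \sqcup \dots \sqcup G_{i,t}$. The $kt$ graphs $\{G_{i,j}\}_{i \in [k], j \in [t]}$ are pairwise vertex disjoint subgraphs of $B$, and each consists of $a$ disjoint copies of $H$. They therefore form a legitimate instance of $\MBpartpattern{kt}{H}{a}{\ell}$ on the same board $B$.

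Maker now plays $\sigma$ on this instance. The board, the move order (Maker first) and the legal moves are literally identical in both games; only the winning condition differs. Every Breaker move is therefore a legal Breaker move in the auxiliary game, and Maker's responses are legal in the original game. Since $\sigma$ is winning, Maker claims a copy $K$ of $\KalephNull$ such that $G_{i,j} \cap K$ contains at least $\ell$ copies of $H$ for every $i, j$. The groups are vertex disjoint, so these copies are pairwise distinct. Hence $G_i \cap K \supseteq \bigsqcup_{j} (G_{i,j} \cap K)$ contains at least $\ell t$ copies of $H$ for every $i \in [k]$, which is exactly Maker's winning condition in $\MBpartpattern{k}{H}{ta}{\ell t}$.

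The one point needing a moment's care is the monotonicity in the number of copies $C$: minimality of $\MBpartpatternNumber{k}{H}{\ell t}$ requires only that a winning strategy exists at $C = ta$, which we have shown. No step is a real obstacle. The only subtlety is checking that the definition of the game allows the $G_{i,j}$ to be an arbitrary disjoint family, so that the auxiliary instance is valid regardless of the original embedding, and this holds by definition.
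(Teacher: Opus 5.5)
Your proof is correct and follows the paper's argument: split each $G_i$ into $t$ groups of $a$ copies, play Maker's winning strategy for the resulting $kt$ vertex-disjoint graphs, and take the union of the $\ell$ copies from each group to get $\ell t$ copies in each $G_i$. Your remark that extra copies can only help Maker is not needed, since you directly exhibit a winning strategy with exactly $C = ta$ copies.
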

\begin{proof}
    Suppose we are given a graph $H$ and $k, \ell, t \in \N$. If $\MBpartpatternNumber{k t}{H}{\ell} = \infty$, we are done, so suppose that $\MBpartpatternNumber{k t}{H}{\ell} \in \N$. In order to show the inequality above, we have to show that Maker has a winning strategy in $\MBpartpattern{k}{H}{t x}{\ell t}$ for $x \coloneqq \MBpartpatternNumber{k t}{H}{\ell}$. To do that, suppose we are given graphs $G^1, \dots, G^{k}$, such that for each $i \in [k]$, $G^i$ consists of pairwise vertex disjoint copies $H^i_1, \dots, H^i_{tx}$ of $H$.
    \begin{figure}[H]
        \centering
        \includegraphics[width=0.60\textwidth]{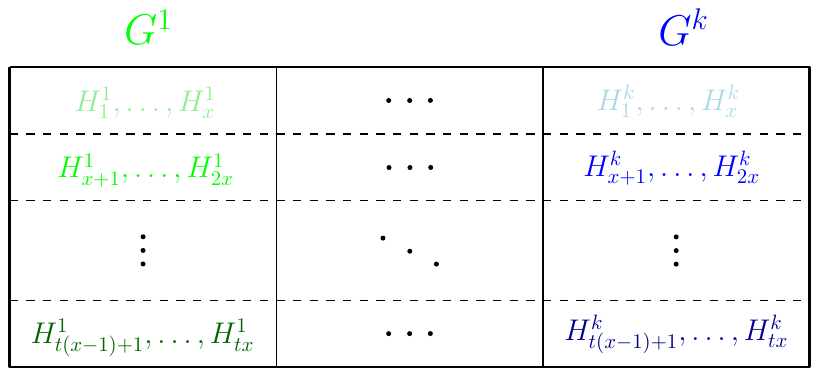}
        \caption{One can think of $G^1, \dots, G^{k}$ as graphs of different (standard) colors. For each color $c$, partition the copies of $H$ in color $c$ into $t$ sets of equal size and recolor them with pairwise different shades of $c$. By definition of $x$, Maker can include exactly $\ell$ copies of $H$ of each of the $k t$ shades into her $\KalephNull$. So she includes $\ell t$ copies of each color in her $\KalephNull$ as desired.}
    \end{figure}
    Let $G^i$ be the disjoint union of $\{\widetilde{G}^i_j\}_{j \in [t]}$, where $\widetilde{G}^i_j = H^i_{(j-1)x +1} \cup \ldots \cup H^i_{j x}$.
    Since each $\widetilde{G}^i_j$ consists of $x$ pairwise disjoint copies of $H$, by choice of $x$, Maker has a strategy to claim a copy $K$ of $\KalephNull$ containing $\ell$ copies of $H$ from every $\widetilde{G}^i_j$. By definition of $\{\widetilde{G}^i_j\}_{j \in [t]}$, $K$ contains $\ell t$ copies of $H$ from each $G^i$ as desired.
\end{proof}

In order to proceed, we need the following immediate consequence of KÃ¶nig's infinity lemma \cite{Koenig1927}.
\begin{lemma}
    \label{lem:KÃ¶nig}
    Every infinite rooted tree $T$ with $d(v) < \alephNull$ for all $v \in V(T)$ contains a rooted ray.
\end{lemma}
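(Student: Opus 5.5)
The statement is the classical consequence of König's infinity lemma, and I will prove it directly by building the ray greedily, one vertex at a time, always staying inside a subtree that is still infinite. Let $T$ be rooted at $r$. The key observation is that for every vertex $x$, the up-closure $\lfloor x \rfloor$ is the disjoint union of $\{x\}$ and the sets $\lfloor y \rfloor$ over the children $y$ of $x$. Since $d(x) < \alephNull$, $x$ has only finitely many children.

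Hence, if $\lfloor x \rfloor$ is infinite, it is a finite union of $\{x\}$ with finitely many sets $\lfloor y \rfloor$, so at least one child $y$ of $x$ has $\lfloor y \rfloor$ infinite. This pigeonhole step is the only place where local finiteness is used, and it is the crux of the argument. The remaining steps are routine.

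I now construct the sequence recursively. Set $x_0 \coloneqq r$; then $\lfloor x_0 \rfloor = V(T)$ is infinite. Given $x_n$ with $\lfloor x_n \rfloor$ infinite, choose $x_{n+1}$ to be a child of $x_n$ with $\lfloor x_{n+1} \rfloor$ infinite, which exists by the observation above. (If one wants to avoid the axiom of choice, fix any enumeration of $V(T)$ and pick the least such child; this is harmless since $B$ is countable.)

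It remains to check that the result is a rooted ray. Each $x_{n+1}$ is a child of $x_n$, so $x_n x_{n+1} \in E(T)$ and $h(x_{n+1}) = h(x_n) + 1$. Thus the heights are strictly increasing, so the vertices $x_n$ are pairwise distinct. Therefore $R$ with $V(R) = \{x_n\}_{n \in \N_0}$ and edges $\{x_n x_{n+1}\}$ is a ray contained in $T$ that starts at the root, i.e.\ a rooted ray, as claimed.
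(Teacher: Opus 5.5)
Your proof is correct. The paper gives no argument of its own and simply cites König's infinity lemma; your recursive choice of a child $x_{n+1}$ of $x_n$ whose up-closure $\lfloor x_{n+1} \rfloor$ is still infinite is exactly the standard proof of that lemma specialised to rooted trees, so you have just made the cited step self-contained.
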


In the following proof, we are going to introduce a tool called the \emph{wish function}, due to Arlt, which will be used frequently in the rest of this paper. By using this tool, Arlt came up with a much more transparent proof of \Cref{thm:lucagame} in his bachelor's thesis \cite{LucaBachelor}. Note that a weaker version of \Cref{thm:lucagame} was first proven by Bowler, Emde and Gut \cite{Bowler_2023}.

\begin{theorem}
    \label{thm:lucagame}
    Given $k,s \in \N$, a vertex $r \in B$ and a coloring $c \colon V(B) \to k$, such that $|c^{-1}(i) \cap V(B)| = \alephNull$ for every $i \in k$. In the Maker-Breaker game, in which Maker and Breaker alternately claim edges of $B$, Maker can claim a copy $K$ of $\KalephNull$ in $B$, such that $r \in V(K)$ and $|c^{-1}(i) \cap V(K)| = \alephNull$ for all $i \in k$.
    Moreover, Maker can ensure that after $s (k+1)^{ks+1}$ moves, she has claimed a copy of $K_{ks}$, such that each color appears exactly $s$ times among its vertices and all but $3s (k+1)^{ks+1}$ vertices of each color class are still fresh.
\end{theorem}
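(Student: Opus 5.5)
We prove \Cref{thm:lucagame} by an explicit strategy driven by a \emph{wish function}, organized in rounds. Fix an enumeration of $V(B)$. At every stage Maker maintains a finite clique $Q$, with all edges claimed by her, that starts as $Q=\{r\}$. She also maintains a finite set of \emph{candidates}: vertices $x\notin Q$ such that Maker has claimed every edge from $x$ to $Q$ and Breaker has claimed no edge between $x$ and $Q$ or between $x$ and another candidate. The wish function $w$ assigns to each candidate a colour $w(x)\in k$, namely the colour Maker currently wants to add to $Q$ next. Maker reserves one pool of fresh vertices per colour, taken from each infinite class $c^{-1}(i)$.

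One round works as follows.
\begin{enumerate}
\item Maker picks the next colour $i$ in cyclic order. She wants to add to $Q$ a vertex of colour $i$ that is joined by her to all of $Q$.
\item She takes fresh vertices of colour $i$ and starts connecting them to $Q$.
\item Whenever Breaker claims an edge at one of these partially connected vertices, she abandons that vertex and starts on a new fresh one.
\end{enumerate}
This naive approach fails because Breaker can keep killing partially connected vertices. The remedy is to work on several candidates at once, which is where the bound $(k+1)^{ks+1}$ comes from.

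Concretely, Maker keeps a tree of candidate sets.
\begin{enumerate}
\item When $Q$ has size $j$, she takes $k+1$ fresh candidates.
\item She connects each of them to $Q$ edge by edge.
\item Each time Breaker destroys a candidate by claiming an edge at it, she replaces it by a fresh vertex.
\item She keeps going until some candidate is fully joined to $Q$.
\end{enumerate}
Since Breaker claims only one edge per move, a potential argument shows that a candidate is completed. The potential counts, for each surviving candidate, how many edges to $Q$ are still missing. Maker's move decreases it by one. Breaker's move kills at most one candidate, and that candidate is then restarted.

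The induction is on $|Q|$, and the number of moves $M_j$ needed to grow $Q$ from size $j$ to $j+1$ satisfies a recursion of the form $M_{j+1}\le (k+1)M_j$. Growing $Q$ to $K_{ks}$ with exactly $s$ vertices of each colour therefore takes at most $s(k+1)^{ks+1}$ moves. Vertices are touched only by Maker's moves (at most one new vertex each) and by Breaker's moves (at most two vertices each). Hence at most $3s(k+1)^{ks+1}$ vertices are non-fresh, and in particular at most that many in each colour class. This gives the "moreover" part.

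Infinite continuation uses that all but finitely many vertices of each colour remain fresh after every finite stage. Maker repeats the rounds forever with $s\to\infty$, cycling through the colours. The cliques $Q$ are nested, so their union is a $K_{\aleph_0}$ that contains $r$ and meets every colour class infinitely often.

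The main obstacle is making the replacement and potential argument rigorous. Breaker's edges between candidates, and edges at vertices that are fresh now but useful later, must not spoil the invariant. The wish function handles this. Each candidate records which colour it is meant to supply, and when Breaker attacks a candidate, Maker redirects her wish to a sibling candidate in the same $(k+1)$-ary branching. The counting behind the factor $(k+1)$ per level is then a pigeonhole over the $k$ colours plus one spare candidate. I would first check the case $k=1$ of this scheme and then generalise.
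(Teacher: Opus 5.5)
\textbf{The gap: you commit to a single nested clique.} Your strategy fixes one nested clique $Q$ and only ever extends it by a vertex that is fully joined to the current $Q$. This cannot work, whatever you do with candidates. As soon as your strategy has fixed $Q=\{r,x_1\}$, Breaker can pair $ry$ with $x_1y$ for every vertex $y$ for which neither edge has been claimed yet. He answers each of these edges that you claim by taking its partner. Then only the finitely many vertices touched before $x_1$ entered $Q$ can ever be joined by you to both $r$ and $x_1$, so $Q$ stays finite forever.

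Your potential argument fails for the same reason. Breaker does not attack an arbitrary candidate: he claims a second missing edge from the very candidate you just advanced towards $Q$. So each of your edges is wasted, and ``restart'' never terminates. Consequently the recursion $M_{j+1}\le (k+1)M_j$ has no basis. The ``tree of candidates'' and ``sibling'' you mention at the end are the right instinct, but they must replace the nested clique, not serve it. Maker may never decide, at any finite time, which finite clique is an initial segment of her $\KalephNull$.

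\textbf{What the paper does instead.} Maker grows a rooted tree $T_n$ at $r$ and claims its comparability graph: every vertex is joined to all of its ancestors, so every rooted path spans one of her cliques. The tree also satisfies $ch(v)\le k+1$ and $c(v)\equiv h(v)\Mod{k}$. On the maximal $(k+1)$-ary subtree, each leaf $\ell$ wishes for colour $c(\ell)+1$, and each inner vertex wishes for a colour wished by at least two of its $k+1$ children (pigeonhole). This gives a binary subtree on which every wish equals $i^*=w(r)$.

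A fresh vertex $x$ of colour $i^*$ is then joined to a whole root-to-leaf path $P_y$ of that binary subtree. Maker claims $xr$ and then uses the sibling pairing of \Cref{lem:connVertexToHalfTrees} and \Cref{rem:connVertexToHalfTrees}: whenever Breaker takes an edge into one branch, she takes the edge to its sibling. Thus the path is chosen adaptively and Breaker cannot block it, and attaching $x$ below $y$ preserves all invariants.

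The infinite clique is the vertex set of a ray in $\bigcup_n T_n$, which exists by K\"onig's lemma because branching is bounded. Since colour $\equiv$ height mod $k$, every colour appears infinitely often on it.

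The ``moreover'' part is a direct count, not a recursion. Until a rooted path with $ks$ vertices appears, $T_b\subseteq T(k+1,ks-1)$. Maker's moves up to that point are the edges of the comparability graph of $T_{b+1}$, which number at most $ks\bigl(1+\sum_{i=1}^{ks-1}(k+1)^i\bigr)\le s(k+1)^{ks+1}$. Your fresh-vertex count is fine once this bound is available.
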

We include the full proof here to illustrate the method of wish functions, which will be important later.
\begin{proof}
    We are going to construct a rooted tree recursively as follows. Up to relabeling colors, we have $c(r) = 0$. Set $T_0 = \{r\}$ and suppose for $n \in \N_0$, we have defined a tree $T_n \subseteq B$ rooted at $r$, such that
    \begin{enumerate}
        \item Maker has claimed its comparability graph,
        \item $|V(T_n)| = n+1$,
        \item $ch(v) \leq k+1$ for all $v \in T_n$,
        \item $c(v) = h(v) \Mod{k}$ for all $v \in T_n$.
    \end{enumerate}
    Let $T'_n \subseteq T_n$ be the maximal $(k+1)$-ary subtree of $T_n$. We define a \emph{wish function} $w \colon V(T'_n) \to k$ by setting $w(\ell) = i+1 \Mod{k}$ for every leaf $\ell \in V(T'_n)$ with $c(\ell) = i$ and $i \in k$. For a vertex $v \in V(T'_n)$ for which $w$ is already defined on all of its children, we set $w(v) = j$ for some $j \in k$ for which there exist at least two children in color $j$. By the pigeonhole principle, this yields a well-defined wish function $w$.

    \begin{figure}[htbp]
        \centering
        \includegraphics[width=0.70\textwidth]{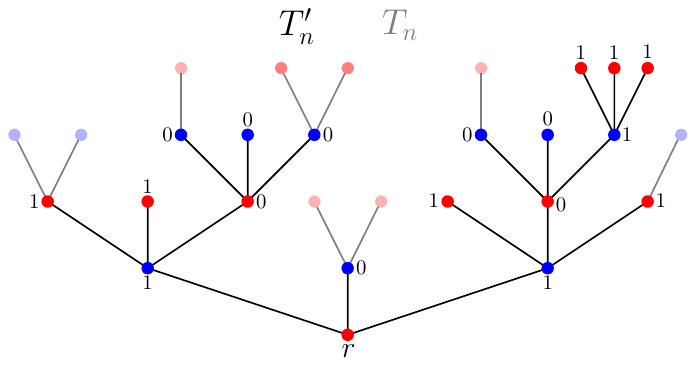}
        \caption{For $k = 2$, let red and blue be color $0$ and $1$, respectively. Next to every vertex of $T'_n$ is written, which color this vertex wishes for. In the figure, we have $w(r) = 1$, since two of its children wish for 1.}
    \end{figure}

    Suppose that $w(r) = i^*$. By the definition of $w$, there exists a maximal binary subtree $T''_n \subseteq T'_n$, such that for all $v \in T''_n$, we have $w(v) = i^*$. Let $x$ be a fresh vertex, such that $c(x) = i^*$. For some leaf $y \in T''_n$, Maker can connect $x$ to all vertices of $P_y$ by \Cref{rem:connVertexToHalfTrees} in exactly $h(y) + 1$ moves. To complete the recursion step, we set $V(T_{n+1}) = V(T_n) \cup \{x\}$ and $E(T_{n+1}) = E(T_n) \cup \{xy\}$.

    By construction of $T_{n+1}$, (i) and (ii) hold.~(iii) holds, since $y$ was a leaf of $T'_n$ and therefore $ch(y) \leq k$ in $T_n$. To verify (iv), observe that by definition of $w$, we have $c(x) = c(y) + 1 = h(y) + 1 = h(x) \Mod{k}$. Finally, let $T_{\alephNull} \coloneqq \bigcup_{n \in \N_0} T_n$. Observe that (iii) still holds for $T_{\alephNull}$. Hence, \Cref{lem:KÃ¶nig} is applicable and we obtain a rooted ray $R \coloneqq r v_1 v_2 \ldots \subseteq T_{\alephNull}$. Let $K$ be the copy of $\KalephNull$ induced by $V(R)$. By (i), Maker has claimed all edges of $K$ and by (iv), we have $|c^{-1}(i) \cap V(K)| = \alephNull$. Since $r \in V(K)$, $K$ is as desired.

    In order to prove that Maker has claimed a copy of $K_{ks}$ after at most $s (k+1)^{ks+1}$ moves, let $b$ be maximal, such that Maker has not claimed a copy of $K_{ks}$ in the comparability graph of $T_b$, which contains exactly $s$ vertices of each color. By (iii), one can equivalently ask for the maximal $b$ such that there exists no rooted path of length $ks$ in $T_b$. So $T_b \subseteq T(k+1, ks-1)$.
    Let $\hat{T}_{i}$ be the comparability graph of $T_{i}$.
    Thus, Maker has first claimed the desired $K_{ks}$ at the point where she has claimed precisely the edges of $\hat{T}_{b+1}$. We conclude that
    \begin{align*}
        e(\hat{T}_{b+1}) & \leq ks + e(\hat{T}_b)
        \leq ks + \sum_{i = 1}^{ks-1} i(k+1)^i \\
        & \leq ks \left( 1 + \sum_{i = 1}^{ks-1} (k+1)^i \right)
        = ks \left( 1 + \frac{(k+1)^{ks} -1}{(k+1) -1}  \right)
        \leq s \cdot (k+1)^{ks+1}.
    \end{align*}
    Moreover, since Breaker has claimed edges incident to at most $2s \cdot (k+1)^{ks+1}$ vertices in the meantime, all but at most $2s \cdot (k+1)^{ks+1} + |V(T_{b+1})| \leq 3s \cdot (k+1)^{ks+1}$ vertices are still fresh after Maker has claimed a copy of $K_{ks}$. This completes the proof.
\end{proof}
Observe that in the proof of \Cref{thm:lucagame}, it was essential to \emph{satisfy the wish} of $r$ with respect to $w$, i.e. to connect a fresh vertex of color $i^*$ to $r$ in order to build $T_{n+1}$ from $T_n$. Thus, the name \emph{wish function}.

\section{Maker's main strategy and the equivalence of games}
\label{ch:FinInfEquiv}
In this section, we first formalize the game played in \Cref{thm:(part)pattStar}, then we introduce a finite game and show that it is equivalent to the game played in \Cref{thm:(part)pattStar}. Finally, we give a sufficient condition for Maker to have a winning strategy in the finite game and thereby in \Cref{thm:(part)pattStar}.

\begin{definition}
    Given the board $B$, integers $C, s, t \in \N_0$ and pairwise vertex disjoint graphs $G_1, \dots, G_{s+t} \subseteq B$, such that each of $G_1, \dots, G_s$ is a copy of $G_{\text{inf}}^{C}$ and each of $G_{s+1}, \dots, G_{s+t}$ is a copy of $G_{\text{star}}$. Maker wins the game $\MBAscInfStars{C}{s}{t}$, if she can claim a copy $K$ of $\KalephNull$, such that $G_i \cap K$ induces a copy of $\SalephNull$ for each $i \in [s]$ and $G_j \cap K$ induces a copy of $G_{\text{star}}$ for each $j \in \{s+1, \dots, s+t\}$.
\end{definition}
Now, we introduce a finite game, which we will prove to be equivalent to the game above.
To do that, we need to introduce the following auxiliary game.

\begin{definition}
    \label{def:auxGame}
    Let $\mathcal{G}$ be a finite hypergraph. In the game $\MBauxM{\mathcal{G}}{1}$ (respectively $\MBauxB{\mathcal{G}}{1}$), Maker (respectively Breaker) begins and Maker and Breaker alternately claim vertices of $\mathcal{G}$. Maker wins if she has a strategy to claim all vertices of an edge $e \in E(\mathcal{G})$. Otherwise, Breaker wins.
    In the game $\MBauxM{\mathcal{G}}{2}$, Maker and Breaker alternately claim vertices of $\mathcal{G}$. Maker begins and she wins, if she can ensure that amongst her claimed vertices there is a vertex set $X \subseteq V(\mathcal{G})$, such that she has a winning strategy in $\MBauxB{\mathcal{G}[X]}{1}$.
\end{definition}
For a fixed $v \notin V(\mathcal{G})$, we define the games $\MBauxVertexM{\mathcal{G}}{1}{v}, \MBauxVertexB{\mathcal{G}}{1}{v}, \MBauxVertexM{\mathcal{G}}{2}{v}$ similarly to their respective vertex version with the only difference being that Maker and Breaker alternately claim edges $vw$ with $w \in V(\mathcal{G})$ instead of vertices.
It is easy to see that, $\MBauxVertexM{\mathcal{G}}{k}{v}$ is equivalent to $\MBauxM{\mathcal{G}}{k}$ for $k \in [2]$ and $\MBauxVertexB{\mathcal{G}}{1}{v}$ is equivalent to $\MBauxB{\mathcal{G}}{1}$.

\begin{remark}
    \label{rem:passingAllowed}
    Observe that, if Breaker has a winning strategy in $\MBauxM{\mathcal{G}}{2}$, he also has a winning strategy in $\MBauxM{\mathcal{G}}{2}$ with the additional rule that both players are allowed to pass. This holds, since Breaker can always assume that Maker just claims any vertex whenever she passes or claims a vertex that Breaker already assumed Maker has claimed.
    If by Breaker's assumption, all vertices of $\mathcal{G}$ have been claimed, he claims any vertex. By definition of $\MBauxM{\mathcal{G}}{2}$, Breaker has been able to claim a vertex set $X$, such that Maker does not have a winning strategy in $\MBauxB{\mathcal{G}[Y]}{1}$ for $Y \coloneqq V(\mathcal{G}) \backslash X$. Hence, he won the game $\MBauxM{\mathcal{G}}{2}$ with the additional passing rule.
    Similarly for $\MBauxM{\mathcal{G}}{1}$, $\MBauxB{\mathcal{G}}{1}$ and if Maker has a winning strategy in any of the three games.
\end{remark}

Now, we define the finite game for which we will prove that it is equivalent to $\MBAscInfStars{C}{s}{t}$. For that recall \Cref{def:KCs}.
\begin{definition}
    \label{def:MBfin}
    Given $C, s, t \in \N_0$. Suppose that Maker and Breaker alternately claim edges of $E(K_C^s)$. Maker begins. If she can claim an edge set $H \subseteq E(K_C^s)$, such that for every coloring $c \colon E(K_C^{(s)}(H)) \to [Cs+t]$ with $r_j \in e$ for all $e \in c^{-1}(j)$ and $j \in [Cs]$, there exists an $i \in [Cs+t]$, such that Maker has a winning strategy in $\MBauxM{c^{-1}(i)}{1}$, if $i \in [Cs]$ or in $\MBauxM{c^{-1}(i)}{2}$, if $i \in \{Cs+1, \dots, Cs + t\}$, we say that Maker has a winning strategy in $\MBfin{C}{s}{t}$. Otherwise, we say that Breaker has a winning strategy in $\MBfin{C}{s}{t}$.
\end{definition}

We now prove that the two games $\MBfin{C}{s}{t}$ and $\MBAscInfStars{C}{s}{t}$ are equivalent by proving both directions separately.

\begin{theorem}
    \label{thm:eq:Maker}
    Given $C, s, t \in \N_0$. If Maker has a winning strategy in $\MBfin{C}{s}{t}$, she has a winning strategy in $\MBAscInfStars{C}{s}{t}$.
\end{theorem}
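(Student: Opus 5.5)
The plan is to turn a winning strategy for Maker in $\MBfin{C}{s}{t}$ into a strategy for the infinite game in two phases. The first phase is finite and is played on the $Cs$ star centres; the second is an infinite tree construction over fresh vertices, run by a wish function as in \Cref{thm:lucagame}.

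\emph{Phase 1.} Identify the $Cs$ centres of the stars of $G_1,\dots,G_s$ with $r_1,\dots,r_{Cs}$, so that the centres coming from the same $G_i$ form one partition class of size $C$ in $K_C^s$. Maker plays her winning strategy for $\MBfin{C}{s}{t}$ on the edges between these centres. Whenever Breaker claims an edge of $B$ outside this set, Maker treats it as a pass; this is allowed by the argument of \Cref{rem:passingAllowed}. After finitely many moves Maker owns an edge set $H$. Each hyperedge $e\in K_C^{(s)}(H)$ is then a Maker clique containing exactly one centre from each $G_i$, $i\in[s]$. These hyperedges are the candidates for the $s$ infinite stars in the final $K$. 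Only finitely many vertices are non-fresh at this point, so every star of every $G_i$ still has infinitely many fresh leaves, and every $G_j$ with $j>s$ still has infinitely many untouched copies of stars.

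\emph{Phase 2.} Maker grows a rooted tree $T$ of fresh vertices, and she claims its comparability graph together with all edges from each tree vertex to a suitable set of centres. Each new vertex $x$ has a \emph{type} in $[Cs+t]$:
\begin{itemize}
\item type $j\in[Cs]$ means that $x$ is a fresh leaf of the star at $r_j$;
\item type $j>Cs$ means that $x$ is a fresh centre or leaf of a star of $G_{j-Cs+s}$ at the appropriate stage.
\end{itemize}
When $x$ is added below a leaf $y$ of $T$, Maker must do two things at once. She connects $x$ to a rooted path $P_y$, using \Cref{lem:connVertexToHalfTrees} and \Cref{rem:connVertexToHalfTrees} on maximal binary subtrees exactly as in \Cref{thm:lucagame}. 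She also plays the auxiliary game $\MBauxVertexM{c^{-1}(i)}{1}{x}$ or $\MBauxVertexM{c^{-1}(i)}{2}{x}$ on the edges from $x$ to the centres, which is equivalent to the vertex versions.
\begin{itemize}
\item For $i\le Cs$: winning $\MBauxVertexM{c^{-1}(i)}{1}{x}$ gives a hyperedge $e\ni r_i$ all of whose centres are joined to $x$. Hence $x$ is a leaf of the star at $r_i$ that also lies in the clique over $e$.
\item For $i>Cs$: winning $\MBauxVertexM{c^{-1}(i)}{2}{x}$ gives a set $X$ of centres joined to $x$, such that every later leaf of the star at $x$ can again win $\MBauxVertexB{\mathcal G[X]}{1}{\cdot}$ even with Breaker moving first. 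That is exactly what is needed to attach leaves to a star of $G_{\text{star}}$ while staying inside a single hyperedge-clique.
\end{itemize}

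The colouring $c$ of \Cref{def:MBfin} appears as follows. For a leaf $y$ of the current tree, and for each hyperedge $e$ consistent with the centre choices already forced along $P_y$, we record which type the branch still "needs" if $e$ is to be the final hyperedge. Propagating these labels upward by the pigeonhole rule gives the wish function, as in \Cref{thm:lucagame}. Any adversarial assignment of types to hyperedges is a colouring $c$ obeying $r_j\in e$ for $e\in c^{-1}(j)$. Because Maker won $\MBfin{C}{s}{t}$, some colour $i$ is winnable in the relevant auxiliary game. Maker therefore satisfies the root's wish with a fresh vertex of type $i$. Cycling through the types, as in (iv) of \Cref{thm:lucagame}, ensures every type is served infinitely often along each branch.

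To finish, the tree is locally finite, so \Cref{lem:KÃ¶nig} yields a ray $R$. The centres used along $R$ are consistent with some single hyperedge $e$; since there are only finitely many hyperedges, some $e$ occurs infinitely often and we pass to that subsequence. Then $K$ is the clique spanned by $e\cup V(R)$, restricted to vertices compatible with $e$. It contains infinitely many leaves at each centre of $e$, giving $\SalephNull$ in each $G_i$ for $i\le s$, and arbitrarily large stars in each $G_j$ for $j>s$.

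The main obstacle is the precise bookkeeping in Phase 2. Breaker's choices, distributed over the simultaneous games on the $P_y$-edges and the centre-edges, must genuinely be encodable as a single colouring $c$ of hyperedges. The wish-function recursion must also remain well defined when a vertex's admissible hyperedge set shrinks along the branch. My first attempt would be to let the wish function take values in pairs (hyperedge, type) and to apply pigeonhole over the finitely many hyperedges at each branching. This keeps the arity of $T$ bounded by a function of $C,s,t$, so that König's lemma still applies.
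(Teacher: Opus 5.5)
Your Phase 1 and the closing K\"onig step agree with the paper. Phase 2, which you yourself flag as the main obstacle, is where the actual proof lies, and what you propose there does not work.

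The $\MBfin{C}{s}{t}$ property only speaks about a colouring of \emph{all} hyperedges of $K_C^{(s)}(A)$ simultaneously. It yields a colour $k$ such that a new vertex $x$ of type $k$ can be joined to \emph{some} hyperedge $\tilde e$ with $c(\tilde e)=k$, but Breaker decides which one. Hence a wish function with values in pairs (hyperedge, type) cannot be satisfied. For $s\ge 2$ Maker cannot force $x$ onto a prescribed hyperedge, since Breaker simply claims one edge from $x$ into it. Moreover, a single pair at the root is not a colouring to which the $\MBfin{C}{s}{t}$ guarantee applies. For the same reason a branch whose admissible hyperedges ``shrink'' cannot be extended reliably. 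Your ending is also unjustified: that some $e$ is met infinitely often along $R$ does not give infinitely many vertices of \emph{each} type among those joined to $e$.

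The missing idea is the paper's per-hyperedge bookkeeping. For every $e\in E(K_C^{(s)}(A))$ Maker keeps a bag of $\alpha=(3Cs+4)(s+t)$ trees with fresh roots, all of whose non-root vertices are joined to every vertex of $e$. Each tree has its own wish function with values in the $s+t$ types relevant to $e$. Then $c(e)$ is a root wish shared by at least $\alpha/(s+t)=3Cs+4$ trees of that bag. This gives a genuine colouring at every step, so wherever Breaker pushes $x$, the bag of $\tilde e$ contains trees wishing for $k$. Since $3Cs+4>Cs+1$, one of them has no edge to $x$ claimed by Breaker during the at most $Cs$ moves of the auxiliary game. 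Your single tree, with the path connection and the auxiliary game played ``at once'', has no answer to this. Maker does not know which branch to follow until the auxiliary game is over, and by then Breaker's moves may already violate the hypothesis of \Cref{lem:connVertexToHalfTrees} that no edge from $x$ into the tree has been claimed.

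The second gap is the $G_{\text{star}}$ part, hidden in ``at the appropriate stage''. Each leaf $v$ of a finite star wins $\MBauxVertexB{\mathcal{G}[X]}{1}{v}$ only in the sense of reaching \emph{some} hyperedge inside $X$, again chosen by Breaker. The final clique, however, needs many leaves of the same star that are pairwise joined and joined to one common hyperedge and one common branch. At the same time the tree must stay locally finite, and the stars met along the ray must become arbitrarily large.

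The paper handles this in several steps:
\begin{enumerate}
\item It takes a fresh star $\hat S_\beta$ with $\beta$ huge.
\item It joins $\hat r_\beta$, via \Cref{lem:connVertexToHalfTrees}, to binary constant-wish subtrees of many trees at once.
\item It sorts the leaves of $\hat S_\beta$ into bags indexed by (hyperedge, tree, leaf of the binary subtree), and grows $\delta$ balanced auxiliary trees inside each bag.
\item From a large bag it extracts $\tau$ copies of $T(\tau,n)$ and attaches them below $\hat r_\beta$.
\end{enumerate}
This keeps all arities at most $\tau$ and forces every branch through $\hat r_\beta$ to contain exactly $n+1$ of its leaves. The stars of each $G_j$ met along the ray therefore have pairwise distinct, hence unbounded, sizes. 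Nothing in your proposal plays this role, so your claim that $K$ contains arbitrarily large stars of each $G_j$, $j>s$, is unsupported.
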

\begin{proof}
    Suppose we are given pairwise vertex disjoint graphs $G_1, \dots, G_{s+t} \subseteq B$, such that for each $i \in [s]$, $G_i$ consists of infinite stars $S_{C(i-1) + 1}, \dots, S_{Ci}$ rooted at $r_{C(i-1) +1}, \dots, r_{Ci}$, respectively and for $i \in \{s+1, \dots, s+t\}$, $G_i$ is the union of an ascending family of finite stars $\{\hat{S}_j^{i-s}\}_{j \in \N}$ with roots $\{\hat{r}_j^{i-s}\}_{j \in \N}$, respectively and $|\hat{S}_j^{i-s}| = j$. Let $K_C^s$ be the graph induced by the partition classes $\{r_{C(i-1) +1}, \dots, r_{Ci}\}$ with $i \in [s]$.
    For convenience, set $\tau = s+3t+1$ and let $\alpha = (3Cs + 4)(s+t)$. Maker begins by claiming $A \subseteq E(K_C^s)$ as desired by her strategy in $\MBfin{C}{s}{t}$. For each hyperedge $e \in E(K_C^{(s)}(A))$, we fix an enumeration of its vertices $e = \{r_{j_1(e)}, \dots, r_{j_s(e)}\}$ for the rest of the proof. In addition for each $e \in E(K_C^{(s)}(A))$, we fix a set of pairwise different fresh vertices $R_e \coloneqq \{r_{(e,1)}, \dots, r_{(e,\alpha)}\}$, such that for $e \neq e'$, we have $R_e \cap R_{e'} = \emptyset$. Those vertices will serve as roots of some auxiliary trees. Maker is going to proceed recursively on $n$.
    \begin{figure}[H]
        \centering
        \includegraphics[width=0.60\textwidth]{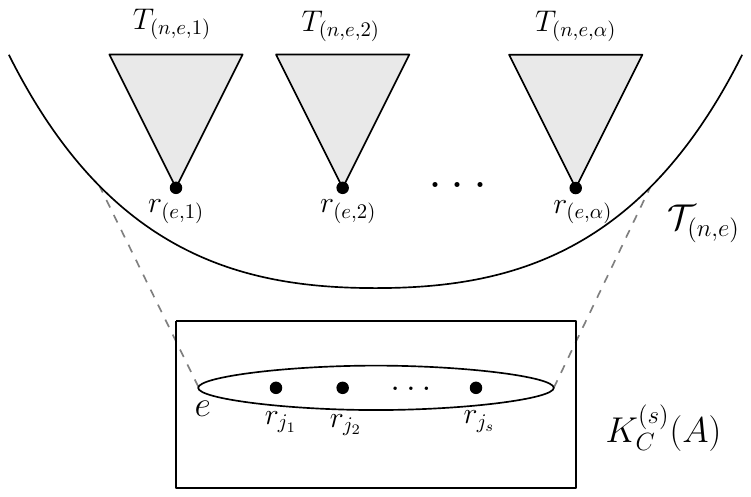}
        \caption{In the figure above is depicted what Maker has constructed after the $n^\text{th}$ recursion step. Each bag $\mathcal{T}_{(n,e)}$ contains a fixed amount of rooted trees, such that Maker has claimed each of their comparability graphs and connected every vertex of $\mathcal{T}_{(n,e)}$ except for the roots to all vertices of $e$. In the $(n+1)^\text{st}$ step, Maker is going to extend one of the trees $T_{(n,e,i)}$ for some $e \in E(K_C^{(s)}(A))$.}
        \label{pic:RecursionStepEquiv1}
    \end{figure}
    After the $n^\text{th}$ recursion step, Maker will have constructed a set $\mathcal{T}_{(n,e)}$ of $\alpha$ trees for each $e \in E(K_C^{(s)}(A))$ as in \Cref{pic:RecursionStepEquiv1}. Each tree $T \in \mathcal{T}_{(n,e)}$ will consist of leaves of the infinite stars $\{S_i\}_{i \in [Cs]}$ and finite subgraphs of $\{\hat{S}_j^i\}_{i \in [t], j \in \N}$, such that each such subgraph spans a $T(\tau, h_S)$ rooted at $\hat{r}_j^i$ for pairwise different $h_S$ (see \Cref{pic:RecursionStepEquiv2}).
    Moreover, she will build $T$ in such a way, that for any rooted path $P \in T$, the vertices of height $1, \dots, s$ will be leaves of infinite stars of $G_1, \dots, G_s$, respectively. After this will come the root of some $\hat{S}_{j_1}^1$ followed by some leaves of that star. Then, the root of some $\hat{S}_{j_1}^1$ followed by some of its leaves and so on until we have $\hat{r}_{j_t}^t$ for some $j_t$, followed by some leaves of $\hat{S}_{j_t}^t$. Thereafter the cycle begins again with leaves of infinite stars of $G_1, \dots, G_s$ as in \Cref{pic:RecursionStepEquiv2}.

    Formally, for all $e \in E(K_C^{(s)}(A))$ and $i \in [\alpha]$, we define the rooted tree $T_{(0, e, i)} \coloneqq \{r_{(e,i)}\}$ and set $\mathcal{T}_{(0,e)} \coloneqq \{T_{(0,e,1)}, \dots, T_{(0,e, \alpha)}\}$ for each $e \in E(K_C^{(s)}(A))$.
    \begin{figure}[H]
        \centering
        \includegraphics[width=0.72\textwidth]{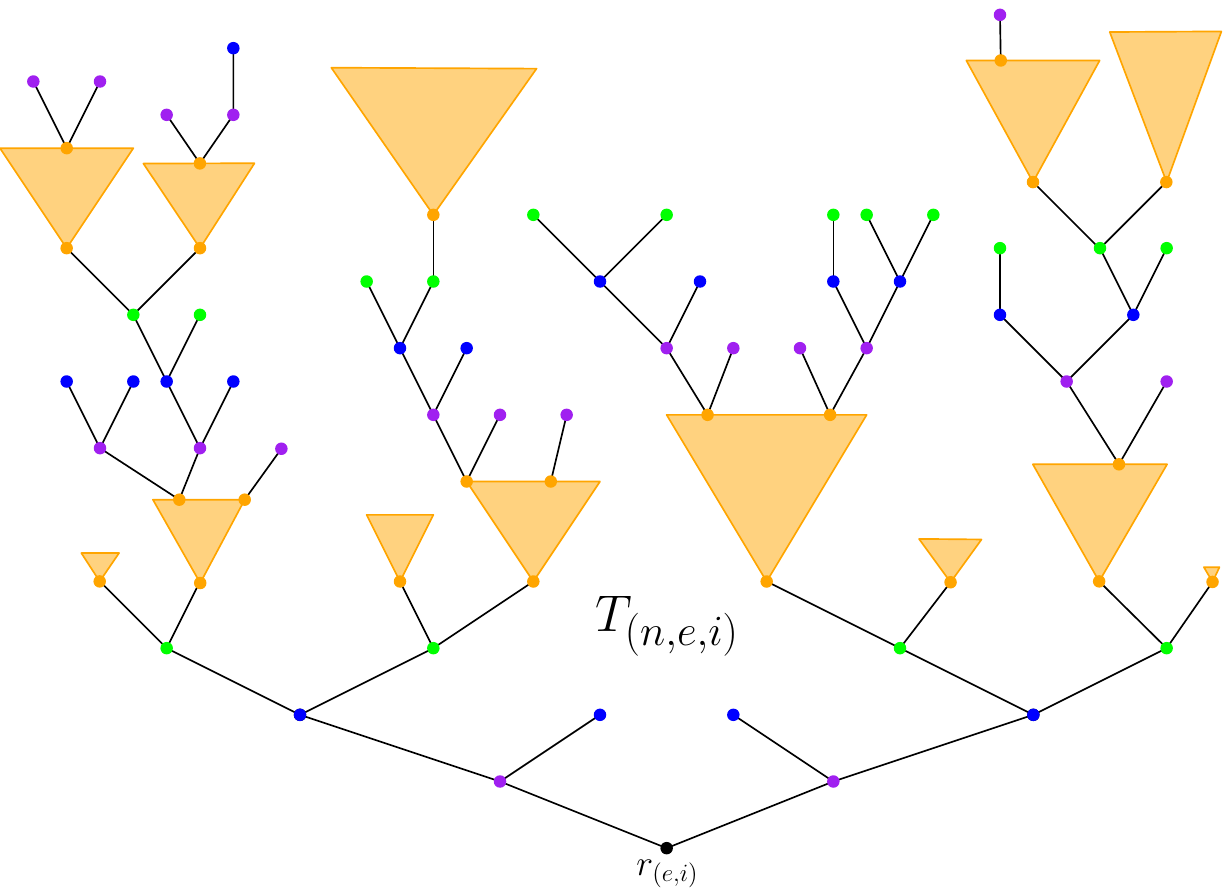}
        \caption{
            The figure above depicts a binary subtree of $T_{(n,e,i)}$ in the case of $s=3$ and $t=1$. Purple, blue and green vertices correspond to vertices from $G_1, G_2$ and $G_3$, respectively and each orange triangle is a copy of $T(\tau, h)$ for pairwise different $h$ with root $\hat{r}_b^1 \in G_4$, such that all vertices of an orange triangle are contained in the same finite star of $G_4$.
            For each rooted path, Maker must have an `ascending color pattern', which means in our case that with respect to the tree order of $T_{(n,e,i)}$, every purple vertex is followed by a blue vertex, then a green vertex, thereafter parts of an orange triangle, then a purple vertex again and so on.
            In the $(n+1)^\text{st}$ recursion step, Maker is going to attach either a leaf of an infinite star of $G_1, G_2$ or $G_3$ or a finite star of $G_4$ to $T_{(n,e,i)}$, such that the ascending color pattern with respect to the tree order is preserved.
        }
        \label{pic:RecursionStepEquiv2}
    \end{figure}

    Maker will now recursively build for each $n \in \N_0$ and each edge $e \in E(K_C^{(s)}(A))$ a family of $\alpha$ trees $\mathcal{T}_{(n,e)} = \{T_{(n,e,1)}, \dots, T_{(n,e, \alpha)}\}$ rooted at $r_{(e,1)}, \dots, r_{(e,\alpha)}$, respectively, such that
    \begin{enumerate}
        \item for each $T \in \mathcal{T}_{(n,e)}$, Maker has claimed the comparability graph of $T$,
        \item $|\bigcup_{e} V(\mathcal{T}_{(n,e)})| > |\bigcup_{e} V(\mathcal{T}_{(n-1,e)})|$,
        \item for all $v \in V(\mathcal{T}_{(n,e)}) \setminus R_e$, $v$ is connected to all vertices of $e$ and $ch(v) \leq \tau$,
        \item for all $i \in [s]$ and $v \in V(\mathcal{T}_{(n,e)}) \cap V(S_{j_i(e)})$, we have $p(v) \in V(G_{s+t}) \cup R_e$, if $i = 1$ and $p(v) \in V(S_{j_{i-1}(e)})$, otherwise,
        \item  for all $i \in [t]$ and $v \in V(\mathcal{T}_{(n,e)}) \cap V(\hat{S}_k^i)$ with $k \in \N$, we have
              \begin{itemize}
                  \item $p(v) \in V(S_{j_s(e)})$, if $v = \hat{r}_k^1$,
                  \item $p(v) \in V(G_{s+i-1})$, if $v = \hat{r}_k^i$ for $i > 1$,
                  \item $p(v) \in V(\hat{S}_k^i)$, otherwise.
              \end{itemize}
              Moreover, for every root $\hat{r}_k^i \in V(\mathcal{T}_{(n,e)})$, let $h = h(\hat{r}_k^i)$ be maximal, such that $U^*_h(\hat{r}_k^i) \subseteq V(\hat{S}_k^i)$. Then, we have $h(\hat{r}) \leq n$ for all $\hat{r} \in \{\hat{r}_j^i\}_{j \in \N} \cap V(\mathcal{T}_{(n,e)})$ and $h(\hat{r}) \neq h(\hat{r}')$, if $\hat{r} \neq \hat{r}'$.
    \end{enumerate}
    For the execution of the recursion step, let $h^* \in \N$ be such that each tree in $\bigcup_{e} \mathcal{T}_{(n,e)}$ is of height less than $h^*$.
    For all $e \in E(K_C^{(s)}(A))$ and $k \in [\alpha]$, we define $T_{(e,k)}'$ to be the maximal $\tau$-ary subtree of $T_{(n,e,k)}$ rooted at $r_{(e,k)}$. We define an auxiliary wish function $w_{(e,k)} \colon V(T_{(e,k)}') \to \{j_1(e), \dots, j_s(e), Cs+1, \dots, Cs+t \}$, by setting for a leaf $\ell$ of $T_{(e,k)}'$:
    \[
        w_{(e,k)}(\ell) =
        \begin{cases*}
            j_1(e)     & \text{if $\ell \in V(G_{s+t}) \cup R_e$}        \\
            j_{i+1}(e) & \text{if $\ell \in V(S_{j_i(e)})$ and $i < s$,} \\
            Cs+1       & \text{if $\ell \in V(S_{j_s(e)})$,}             \\
            Cs+i+1     & \text{if $\ell \in V(G_{s+i})$ and $i < t$.}    
        \end{cases*}
    \]
    For a vertex $v \in V(T_{(e,k)}')$ for which $w_{(e,k)}$ is already defined on all of its children, we set $w_{(e,k)}(v)$ to be a color for which there exist at least two children in one color from $\{j_1(e), \dots, j_s(e)\}$ or at least four children in one color from $\{Cs+1, \dots, Cs+t\}$. By the pigeonhole principle, this yields a well-defined wish function $w_{(e,k)}$.

    We are going to define a coloring $c \colon E(K^{(s)}_C(A)) \to [Cs+t]$ as follows. By the pigeonhole principle, there must exist $i^* \in \{j_1(e), \dots, j_s(e), Cs+1, \dots, Cs+t\}$ and a set $Q_e \subseteq [\alpha]$ of size $\frac{\alpha}{s+t}$, such that $w_{(e,q)}(r_{(e,q)}) = i^*$ for all $q \in Q_e$. For any such $Q_e$ and $i^*$, we set $c(e) = i^*$.
    Since $c(e) \in \{j_1(e), \dots, j_s(e), Cs+1, \dots, Cs+t\}$, we must have $r_{i^*} \in e$ in the case where $c(e) = i^*$ for $i^* \in [Cs]$, so $c$ satisfies the property of the coloring from the game $\MBfin{C}{s}{t}$.

    Let $k \in [Cs+t]$ be, such that Maker has a winning strategy in $\MBauxM{c^{-1}(k)}{1}$ or $\MBauxM{c^{-1}(k)}{2}$ depending on the value of $k$.

    \vspace{0.6em}
    \underline{Case 1}: $k \in [Cs]$
    \vspace{-0.8em}
    \begin{caseproof}{1}
        Then Maker has a winning strategy in $\MBauxM{c^{-1}(k)}{1}$ by \Cref{def:MBfin}. Let $x$ be a fresh leaf of $S_k$. Since Maker has a winning strategy in $\MBauxM{c^{-1}(k)}{1}$, she also has a winning strategy in $\MBauxVertexM{c^{-1}(k)}{1}{x}$. Therefore, Maker can (and does) connect $x$ to an edge $\tilde{e} \in E(K^{(s)}_C(A))$ with $c(\tilde{e}) = k$ in at most $Cs$ moves. 
        By the pigeonhole principle, there exists $Q_{\tilde{e}} \subseteq [\alpha]$ of size $\frac{\alpha}{s+t}$, such that $w_{(\tilde{e},q)}(r_{(\tilde{e},q)}) = k$ for all $q \in Q_{\tilde{e}}$. 
        Since $|Q_{\tilde{e}}| > Cs+1$, there exists $\tilde{q} \in Q_{\tilde{e}}$, such that no edge of $E(\{x\}, T'_{(\tilde{e},\tilde{q})})$ has been claimed.
        By construction of $w_{(\tilde{e},\tilde{q})}$, there exists a maximal binary subtree $T''_{(\tilde{e},\tilde{q})} \subseteq T'_{(\tilde{e},\tilde{q})}$, such that for all $v \in V(T''_{(\tilde{e},\tilde{q})})$, we have $w_{(\tilde{e},\tilde{q})}(v) = k$.
        By \Cref{rem:connVertexToHalfTrees}, Maker can (and does) connect $x$ to a maximal rooted path of $T''_{(\tilde{e},\tilde{q})}$ with end vertex $y$. Set
        \begin{align*}
            V(T_{(n + 1, \tilde{e},\tilde{q})}) & = V(T_{(n, \tilde{e},\tilde{q})}) \cup \{x\},    \\
            E(T_{(n + 1, \tilde{e},\tilde{q})}) & = E(T_{(n, \tilde{e},\tilde{q})}) \cup \{y, x\},
        \end{align*}
        and $T_{(n +1, e, j)} = T_{(n, e, j)}$ for all $(e,j) \in E(K^{(s)}_C(A)) \times [\alpha] \setminus \{(\tilde{e}, \tilde{q})\}$ and start over. (i) and (ii) hold by construction. For (iii), firstly Maker has connected $x$ to $\tilde{e}$ by definition of $\tilde{e}$ and secondly, observe that $y$ was also a leaf of $T'_{(\tilde{e},\tilde{q})}$ and by construction of $T'_{(\tilde{e},\tilde{q})}$, we have $ch(y) < \tau$ in $T_{(n, \tilde{e},\tilde{q})}$. In order to verify (iv), observe that we have $k = j_i(\tilde{e})$ for some $i \in [s]$. If $i = 1$, we have $p(x) \in V(G_{s+t}) \cup R_{\tilde{e}}$ and if $i >1$, we have $p(x) \in S_{j_{i-1}(\tilde{e})}$ by definition of $w_{(\tilde{e},\tilde{q})}(y)$. Since (v) holds trivially, this completes the recursion step in Case 1.
    \end{caseproof}

    \underline{Case 2}: $k \in \{Cs+1, \dots, Cs+t\}$
    \vspace{-0.8em}
    \begin{caseproof}{2}
        In this case, Maker has a winning strategy in $\MBauxM{c^{-1}(k)}{2}$ by \Cref{def:MBfin}. Let
        \begin{align*}
            k' = k - Cs, \hspace*{1cm}
            \delta = Cs + h^* + \tau +1, \hspace*{1cm}
            \gamma = \delta (\tau +1)^n,
        \end{align*}
        Maker selects a fresh star $\hat{S}_{\beta}^{k'}$ rooted at $\hat{r}_{\beta}^{k'}$ with $\beta \geq C^s \alpha 2^{h^*} (2(Cs+1+ h^* +n)\gamma +2)$. Since Maker has a winning strategy in $\MBauxM{c^{-1}(k)}{2}$, she also has a winning strategy in $\MBauxVertexM{c^{-1}(k)}{2}{\hat{r}_{\beta}^{k'}}$. Therefore, she can (and does) connect $\hat{r}_{\beta}^{k'}$ to a subset $X \subseteq \{r_1, \dots, r_{Cs}\}$ in at most $Cs$ moves, such that she has a winning strategy in $\MBauxB{c^{-1}(k)[X]}{1}$. Let $A^*$ be the edge set of $c^{-1}(k)[X]$. By definition of $c$ for each $e \in A^*$, there exists a set $Q_e \subseteq [\alpha]$ of size $\frac{\alpha}{s+t} -Cs$, such that $w_{(e,q)}(r_{(e,q)}) = k$ for all $q \in Q_e$ and no edge of $E(\{\hat{r}_{\beta}^{k'}\}, T'_{(e,q)})$ has been claimed yet. By definition of $w_{(e,q)}$, there exists a maximal 4-ary rooted subtree $T''_{(e,q)} \subseteq T'_{(e,q)}$ for each $e \in A^*$ and $q \in Q_e$, such that for all $v \in V(T''_{(e,q)})$, we have $w_{(e,q)}(v) = k$. 

        Now, we apply \Cref{lem:connVertexToHalfTrees} on $\hat{r}_{\beta}^{k'}$ and $\mathcal{T} = \{T''_{(e,q)}\}_{(e,q) \in \bigcup_{e \in A^*} \{e\} \times Q_e}$ with the enumeration of trees, where all the $T''_{(e,q)}$ with $q \in Q_e$ appear as a consecutive block of even size for each $e \in A^*$ and so if $T_{i^*} = T''_{(e',q')}$ and $T_{i^*+1} = T''_{(e'',q'')}$, we have $e' = e''$ for each odd $i^*$.
        By \Cref{lem:connVertexToHalfTrees}, suppose that for each $e \in A^*$, Maker has connected $\hat{r}_{\beta}^{k'}$ to maximal binary subtrees $T'''_{(e,q')} \subseteq T''_{(e,q')}$ with $q' \in Q'_e$ for $Q'_e \subseteq Q_e$ being of size exactly $\frac{|Q_e|}{2}$. Moreover, Maker can do so using at most $2^{h^*}|A^*||Q'_e| \leq 2^{h^*} C^s \alpha$ moves in total.
        \begin{figure}[H]
            \centering
            \includegraphics[width=0.47\textwidth]{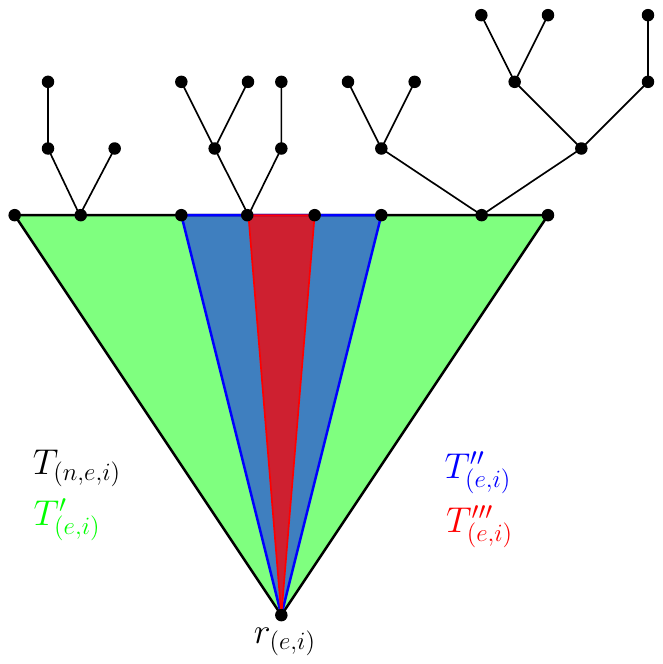}
            \caption{
                $T_{(n,e,i)}$: The tree given by the recursive construction. \newline
                $T'_{(e,i)}$: The maximal $\tau$-ary rooted subtree of $T_{(n,e,i)}$. \newline
                $T''_{(e,i)}$: The maximal 4-ary rooted subtree of $T'_{(e,i)}$, such that $w_{(e,i)}$ is constant on $T''_{(e,i)}$. \newline
                $T'''_{(e,i)}$: The maximal binary rooted subtree of $T''_{(e,i)}$ to which Maker has connected a root of a finite star.
            }
            \label{pic:TheDifferentT}
        \end{figure}
        For each $e \in A^*$, $q \in Q'_e$ and every leaf $\ell \in V(T'''_{(e,q)})$, let $H^0_{(e, q, \ell, 1)} = \ldots = H^0_{(e, q,\ell, \delta)} = \emptyset$ and suppose that Maker has recursively constructed rooted auxiliary trees $H^m_{(e, q, \ell, 1)}, \dots, H^m_{(e, q, \ell, \delta)}$ for some $m \in \N_0$, such that for each $i \in [\delta]$:
        \begin{enumerate}[label=(\Roman*)]
            \item Maker has claimed the comparability graph of $H^m_{(e, q, \ell, i)}$,
            \item each $v \in H^m_{(e, q, \ell, i)}$ is connected to all vertices of $P_\ell \cup e \cup \{\hat{r}_{\beta}^{k'}\}$, for $P_\ell \subseteq T'''_{(e,q')}$ being the unique path from $r_{(e,q')}$ to $\ell$.
        \end{enumerate}

        For the recursion step on $m$, we define a bag $B_{(e, q, \ell)} \coloneqq \{H^m_{(e, q, \ell, 1)}, \dots, H^m_{(e, q, \ell, \delta)}\}$ for each $e \in A^*$, $q \in Q'_e$ and every leaf $\ell \in V(T'''_{(e,q)})$.

        As long as there exists a fresh leaf $v \in \hat{S}_{\beta}^{k'}$, by definition of $\MBauxVertexB{c^{-1}(k)}{1}{v}$, Maker claims $v \hat{r}_{\beta}^{k'}$ first and then she can (and does) connect $v$ to an edge $e' \in A^*$ in at most $Cs +1$ moves. Since $Q'_{e'}$ is of size at least $Cs+2$, there exists a $q' \in Q'_{e'}$, such that no edge of $E(\{v\}, T'''_{(e',q')})$ has been claimed. By \Cref{rem:connVertexToHalfTrees}, Maker can (and does) connect $v$ to a maximal path $P_{\ell'}$ of $T'''_{(e',q')}$ in at most $h^*$ moves. If there exists $j \in [\delta]$, such that $H^m_{(e', q', \ell', j)} = \emptyset$, let $H^{m+1}_{(e', q', \ell', j)} \coloneqq \{v\}$, set $v$ to be its root and set $H^{m+1}_{(e, q, \ell, i)} \coloneqq H^{m}_{(e, q, \ell, i)}$ for all other auxiliary trees. Observe that (I) and (II) hold and start over.
        \begin{figure}[H]
            \centering
            \includegraphics[width=0.55\textwidth]{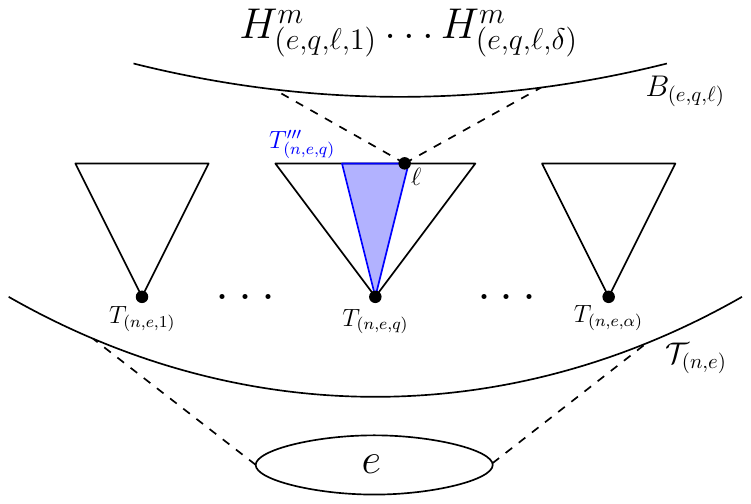}
            \caption{
                The relation of $H^m_{(e, q,\ell, i)}$ to $e, q$ and $\ell$.
            }
        \end{figure}
        Otherwise, let $\tilde{r}_1, \dots, \tilde{r}_\delta$ be the roots of $H^m_{(e', q', \ell', 1)}, \dots, H^m_{(e', q', \ell', \delta)}$, respectively. After Maker has connected $v$ to $P_{\ell'}$, there must exist a set $Z \subseteq [\delta]$ of size at least $\tau$, such that for all $z \in Z$ no edge of $E(\{v\}, H^m_{(e', q', \ell', z)})$ has been claimed. Maker now claims $v \tilde{r}_{z'}$ for $z' \in Z$, such that $|V(H^m_{(e', q', \ell', z')})|$ is minimal. Suppose that Maker has connected $v$ to a rooted path $P_i \subseteq H^m_{(e', q', \ell', z')}$ of length $i$ with end vertex $y$, such that after claiming $vy$, no edge of $E(\{v\}, \lfloor y \rfloor)$ has been claimed.

        If $i < n-1$ and $ch(y) = \tau + 1$, let $y_1, \dots, y_{\tau + 1}$ be the children of $y$. Maker now claims $vy_j$, such that after her move no edge of $E(\{v\}, \lfloor y_j \rfloor)$ has been claimed and among the remaining choices, such that $U(y_j)$ is minimal for $j \in [\tau+1]$. We define $P_{i+1}$ by adding $y_j$ to its vertex set and $yy_j$ to its edge set.
        Otherwise, define $H^{m+1}_{(e', q', \ell', z')}$ by adding $v$ to its vertex set and $vy$ to its edge set. Set $H^{m+1}_{(e, q, \ell, a)} = H^{m}_{(e, q, \ell, a)}$ for all $a \neq z'$ and observe that (I) and (II) both hold.
        This completes the recursion on $m$.

        Now suppose there exists no fresh leaf $v \in \hat{S}_{\beta}^{k'}$ anymore. Since the total number of leaves in $\bigcup_{(e,q)} T'''_{(e,q)}$ is at most $C^s \alpha 2^{h^*}$, by definition of $\beta$, there exists a bag $B_{(e', q', \ell')}$ with $|V(B_{(e', q', \ell')})| \geq \gamma$ for $e' \in A^*$, $q' \in Q'_{e'}$ and a leaf $\ell' \in V(T'''_{(e',q')})$.
        Consider $H^\gamma_{(e', q', \ell', 1)}, \dots, H^\gamma_{(e', q', \ell', \delta)}$, such that $|V(H^\gamma_{(e', q', \ell', 1)})| \geq \ldots \geq |V(H^\gamma_{(e', q', \ell', \delta)})|$ up to relabeling. 
        By definition of Maker's strategy, we have $|V(H^\gamma_{(e', q', \ell', 1)})| \leq |V(H^\gamma_{(e', q', \ell', \tau)})| +1$. For each $i \in [\tau]$, we have $|V(H^\gamma_{(e', q', \ell', i)})| \geq (\tau+1)^n$ by the pigeonhole principle. 
        Let $j$ be maximal, such that there exists a copy $T^j_i$ of $T(\tau, j)$ in $H^\gamma_{(e', q', \ell', i)}$ with root $\tilde{r}_i$, such that each leaf $\hat{v}$ of $T^j_i$ satisfies $U(\hat{v}) \geq (\tau +1)^{n-j}$.
        By choice of $\gamma$, we have $j \geq 0$.

        As long as $j < n$, for each leaf $\hat{v}$ of $T_i^j$ there exist children $\hat{v}_1, \dots, \hat{v}_{\tau+1}$ enumerated decreasingly with respect to $U(\hat{v}_1), \dots, U(\hat{v}_{\tau+1})$, such that $U(\hat{v}_{\tau}) \leq U(\hat{v}_1) +1$ by definition of Maker's strategy. Hence, $U(\hat{v}_1), \dots, U(\hat{v}_{\tau}) \geq (\tau+1)^{n-j-1}$. Define $T^{j+1}_i$ by adding $\hat{v}_1, \dots, \hat{v}_{\tau}$ to the vertex set of $T^j_i$ and $\hat{v}\hat{v}_1, \dots, \hat{v}\hat{v}_{\tau}$ to the edge set of $T^j_i$ for each such $\hat{v}$. Hence, there exists a copy $T^n_i$ of $T(\tau, n)$ in $H^\gamma_{(e', q', \ell', i)}$ for each $i \in [\tau]$, such that (I) and (II) hold for $T^n_i$. Recall \Cref{def:attach}. We define $\tilde{T}$ by attaching $T_1^n, \dots, T_{\tau}^n$ to the rooted tree $\{\hat{r}_{\beta}^{k'}\}$. Finally, we let $T_{(n+1, e', q')}$ be the tree where we attach the rooted tree $\tilde{T}$ to $\ell' \in T_{(n, e', q')}$. For all $(e,q) \in E(K_C^{(s)}(A)) \times [\alpha] \setminus \{(e', q')\}$, we set $T_{(n+1, e, q)} \coloneqq T_{(n, e, q)}$. All we have to show in order to complete the recursion step is that (i)-(v) hold for $T_{(n+1, e', q')}$.

        In order to show that (i) holds, it is sufficient to check whether Maker has connected every $v \in V(\tilde{T})$ to $\lceil v \rceil$. Observe that, $\hat{r}_{\beta}^{k'}$ is connected to $P_{\ell'} = \lceil \hat{r}_{\beta}^{k'} \rceil$, since $P_{\ell'} \subseteq T'''_{(e', q')}$.
        For each $v \in T_i^n$ with $i \in [\tau]$, we have that $v$ is connected to $P_{\ell'} \cup \{\hat{r}_{\beta}^{k'}\}$ by (II) and Maker has claimed the comparability graph of $T_i^n$ by (I). Hence, $v$ is connected to $\lceil v \rceil$ for all $v \in V(\tilde{T})$. Since, we have attached $\tilde{T}$ to $T_{(n, e', q')}$ in the recursion step, (ii) holds.
        For (iii), observe that $\hat{r}_{\beta}^{k'}$ is connected to $e'$, since $e' \in A^*$ and each $v \in \bigcup_{i \in [\tau]} V(T^n_i)$ is also connected to $e'$ by (II). Moreover, observe that $\tilde{T}$ is $\tau$-ary by construction and since $\ell'$ was a leaf in $T'''_{(e', q')}$, it was also a leaf in $T'_{(e', q')}$, so $ch(\ell')< \tau$ in $T_{(n, e', q')}$ by definition of $T'_{(e', q')}$. Therefore, we have $ch(v) \leq \tau$ for all $v \in T_{(n+1, e', q')}$. For (iv), there is nothing to show.

        For (v), observe that $w_{(e',q')}(\ell') = k = Cs + k'$, since $\ell' \in V(T''_{(e',q')})$. By definition of $w_{(e',q')}$, this implies $\ell' \in V(S_{j_s(e')})$, if $k = Cs+ 1$ and $\ell' \in V(G_{s + k' - 1})$ otherwise. Hence, $p(\hat{r}_{\beta}^{k'})$ is as desired. Moreover, $p(v) \in \hat{S}_{\beta}^{k'}$ for any $v \in V(\tilde{T}) \setminus \{\hat{r}_{\beta}^{k'}\}$.
        Lastly, $h(\hat{r}_{\beta}^{k'}) = n+1$ is maximal, such that $U^*_{n+1}(\hat{r}_{\beta}^{k'}) \subseteq \hat{S}_{\beta}^{k'}$ by construction. Since any root $\hat{r} \in \{\hat{r}_j^i\}_{i \in [t], j \in \N} \cap \bigcup_e \mathcal{T}_{(n,e)}$ satisfies $h(\hat{r}) \leq n$ by assumption, this completes the recursion step in the second case.
    \end{caseproof}

    For each $e \in E(K_C^{(s)}(A))$ and $a \in [\alpha]$, let $T_{(\alephNull,e,a)} = \bigcup_{n \in \N} T_{(n,e,a)}$. By (ii), there exists $a^* \in [\alpha]$ and $e^* \coloneqq \{r_{j_1}, \dots, r_{j_s}\} \in E(K_C^{(s)}(A))$, such that $|V(T_{(\alephNull,e^*,a^*)})| = \alephNull$. By (iii), $ch(v) \leq \tau$ for all $v \in V(T_{(\alephNull,e^*,a^*)})$, so by \Cref{lem:KÃ¶nig}, there exists a rooted ray $R \subseteq T_{(\alephNull,e^*,a^*)}$. By (i), (iii) and since $e^* \in E(K_C^{(s)}(A))$, Maker has claimed all edges of the $\KalephNull$ induced by $V(R \setminus \{r_{(e^*, a^*)}\}) \cup e^*$, which we denote with $K$.
    By (iv) and (v), we have $|V(R) \cap V(S_{j_i(e^*)})| = \alephNull$ for each $i \in [s]$. Hence, $K \cap G_i$ is a copy of $\SalephNull$. Moreover, by (iv) and (v), we have $|V(R) \cap \bigcup_{a \in \N} \{\hat{r}_{a}^{i}\}| = \alephNull$ for each $i \in \{s+1, \dots, s+t\}$.
    Since for pairwise different $r, r' \in V(R) \cap \bigcup_{a \in \N} \{\hat{r}_{a}^{i}\}$, we have $h(r) \neq h(r')$, by (v), $K \cap G_i$ consists of arbitrarily large pairwise disjoint stars. Since $\{S_{j_i}\}_{i \in [s]} \cup \{\hat{S}_j^i\}_{i \in [t], j \in \N}$ were chosen to be pairwise vertex disjoint, there exists $K' \subseteq K$, such that $G_i \cap K'$ is a copy of $\SalephNull$ for $i \in [s]$ and $G_i \cap K'$ is a copy of $G_{\text{star}}$ for $i \in \{s+1, \dots, s+t\}$ as desired by Maker in $\MBAscInfStars{C}{s}{t}$.
\end{proof}

\begin{theorem}
    \label{thm:eq:Breaker}
    Given $C, s, t \in \N_0$. If Breaker has a winning strategy in $\MBfin{C}{s}{t}$, he has a winning strategy in $\MBAscInfStars{C}{s}{t}$.
\end{theorem}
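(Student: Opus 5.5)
Breaker will play the infinite game by simulating, on the finite set of star roots $\{r_1,\dots,r_{Cs}\}$, his winning strategy in $\MBfin{C}{s}{t}$, and then run a separate local strategy for every other vertex $x$. Concretely, the edges of $K_C^s$ (between roots of distinct $G_i$, $i\in[s]$) are answered by Breaker's strategy in $\MBfin{C}{s}{t}$. When Maker claims an edge elsewhere, Breaker answers locally as described below; if no local answer is required he claims an arbitrary edge of $K_C^s$ or passes, which is allowed by \Cref{rem:passingAllowed}. When the $K_C^s$ game ends, let $H$ be Maker's edge set there. Since Breaker wins $\MBfin{C}{s}{t}$, there is a coloring $c\colon E(K_C^{(s)}(H))\to[Cs+t]$ with $r_j\in e$ for all $e\in c^{-1}(j)$ such that Breaker wins $\MBauxM{c^{-1}(i)}{1}$ for $i\in[Cs]$ and $\MBauxM{c^{-1}(i)}{2}$ for $i>Cs$. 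Before $H$ is final, Breaker can handle the finitely many partial positions by playing the local games against all candidate hypergraphs, or more simply by delaying: the $K_C^s$ game lasts finitely many moves, and only finitely many vertices are touched in the meantime.

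Fix $c$. For a vertex $x$ that is a leaf of a star $S_j$, $j\in[Cs]$, Breaker plays $\MBauxVertexB{c^{-1}(j)}{1}{x}$ (Maker moves first, in the version equivalent to $\MBauxM{}{1}$) on the edges $xr_1,\dots,xr_{Cs}$, pairing Maker's moves at $x$ with his answers. This guarantees that the set of roots adjacent to $x$ in Maker's graph contains no hyperedge of color $j$. For a root $\hat r$ of a finite star of $G_{s+i}$, Breaker plays $\MBauxVertexM{c^{-1}(Cs+i)}{2}{\hat r}$. He wins this game, so for the set $X$ of roots Maker attaches to $\hat r$, Maker has no winning strategy in $\MBauxB{c^{-1}(Cs+i)[X]}{1}$. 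Consequently, for each leaf $v$ of that finite star, Breaker can, as Maker claims edges $vr_a$ with $r_a\in X$, play a winning Breaker strategy in the game in which Breaker moves first on $c^{-1}(Cs+i)[X]$. He answers every Maker move at $v$ with his next prescribed move. The point is that the moves at $v$ only matter once Maker also has $v\hat r$. Breaker may therefore start the $v$-game immediately, taking his first move when Maker first touches $v$, namely the move he would make as the first player.

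Now suppose Maker claims a $K$ as required. Then $K$ contains some leaf-rich star $S_{j_i}$ for each $i\in[s]$, and $K$ contains infinitely many vertices, so it must contain roots of stars. The real obstacle is to show that $K\cap\{r_1,\dots,r_{Cs}\}$ contains a hyperedge $e$ of $K_C^{(s)}(H)$. For this, note that $K\cap G_i$ being a copy of $\SalephNull$ for $i\in[s]$ forces $K$ to contain exactly one root $r_{j_i}$ from each class, since a star with infinitely many leaves needs its centre. Maker owns all edges among these roots, so $e=\{r_{j_1},\dots,r_{j_s}\}\in K_C^{(s)}(H)$. Let $c(e)=a$. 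If $a\le Cs$, then $r_a\in e\subseteq K$, so $K$ contains infinitely many leaves $x$ of $S_a$ fully joined to $e$, contradicting Breaker's local strategy at $x$. If $a=Cs+i$, then $K$ contains infinitely many roots $\hat r$ of finite stars of $G_{s+i}$, each joined to $e$ and to at least one of its leaves $v$ in $K$. In particular $e\subseteq X(\hat r)$, and $v$ is joined to $e$ and to $\hat r$, contradicting Breaker's win in the game at $v$. The step I expect to be most delicate is the bookkeeping that lets Breaker start all these local games before $c$ is determined. I would resolve it by noting that Breaker's $K_C^s$ play occupies only finitely many rounds. Vertices touched before the end are finitely many, so Breaker can discard them: the claim holds for a $K$ minus finitely many vertices, which is still a valid copy.
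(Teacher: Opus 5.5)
Your overall plan matches the paper's: play the $\MBfin{C}{s}{t}$ strategy on $K_C^s$ first, then run local auxiliary games at fresh leaves of the $S_j$ and at roots of the finite stars. The case $c(e)\in[Cs]$ is fine, except that the game you want at a leaf $x$ of $S_j$ is the Maker-first game $\MBauxVertexM{c^{-1}(j)}{1}{x}$. The case $c(e)=Cs+i$, however, has a genuine gap.

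Write $H=c^{-1}(Cs+i)$. Breaker's win in $\MBauxVertexM{H}{2}{\hat r}$ only tells you that he wins on $H[X]$ when \emph{he} moves first. Suppose Maker's first edge at a leaf $v$ is $vr_a$ with $r_a\in X$. Then no answer by Breaker restores the tempo: Maker has moved first in the $v$-game, and Breaker need not win that game. For example, take $H=T_4^{(s)}$. Breaker wins $\MBauxM{H}{2}$, since every edge contains the root. Yet Maker can make $H[X]$ contain a copy of $T_2^{(s)}$, which she wins when she moves first. She can then claim $v\hat r$ afterwards. Your own remark that moves at $v$ only matter once Maker owns $v\hat r$ points to the correct fix, which is exactly the paper's. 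If Maker's first edge at a fresh leaf $v$ is not $v\hat r$, Breaker claims $v\hat r$. Only if Maker claims $v\hat r$ first does Breaker start the $v$-game on $H[X]$, and then he genuinely moves first.

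There is a second timing problem. $X=X(\hat r)$ is only defined once all edges from $\hat r$ to $V(H)$ have been claimed, and Maker controls when, or whether, this happens. Leaves she touches in the meantime cannot be handled by a game on $H[X]$. Your `discard finitely many vertices' step only covers the initial $K_C^s$ phase, not infinitely many root games. The paper answers every Maker move inside a star $\hat S$ (root to $H$, leaf to $H$, leaf to root) with a move of the root game until that game is over. Hence only boundedly many leaves of each star are touched before $X$ is fixed, and these are simply written off.

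Consequently, the contradiction you aim for is not something Breaker's strategy enforces. You want that no root in $K$ has a single leaf in $K$ joined to it and to $e$. What the paper proves instead is a uniform bound: every root $\hat r\in V(K)$ has at most $|V(H)|+N$ leaves in $K$. So $K\cap G_{s+i}$ has bounded maximum degree and cannot contain a copy of $G_{\text{star}}$. You need to use the unbounded star sizes in Maker's target, not merely that infinitely many roots of $K$ have a leaf in $K$.
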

\begin{proof}
    Let
    \begin{align*}
        \{G_i\}_{i \in [s+t]}, \
        \{S_i\}_{i \in [Cs]}, \
        \{r_i\}_{i \in [Cs]}, \
        \{\hat{S}_j^i\}_{i \in [t], j \in \N}, \
        \{\hat{r}_j^i\}_{i \in [t], j \in \N}
    \end{align*}
    and $K_C^s$ be defined as in \Cref{thm:eq:Maker}.
    According to his winning strategy in $\MBfin{C}{s}{t}$, suppose that within his first $N$ moves, Breaker has claimed an edge set $A \subseteq E(K_C^s)$ in the Maker-Breaker game on $E(K_C^s)$, such that for $\overline{A} \coloneqq E(K_C^s) \setminus A$, there exists a coloring $c \colon E(K_C^{(s)}(\overline{A})) \to [Cs+t]$, such that $r_i \in e$ for all $i \in [Cs]$ and $e \in c^{-1}(i)$ and Breaker has a winning strategy in $\MBauxM{c^{-1}(j_1)}{1}$ for all $j_1 \in [Cs]$ and in $\MBauxM{c^{-1}(j_2)}{2}$ for all $j_2 \in \{Cs+1, \dots, Cs+t\}$.

    For convenience, let $H_a$ be the $s$-uniform hypergraph induced by $c^{-1}(a)$ for each $a \in [Cs+t]$. Let $X$ be the set of vertices, which were not fresh after Breaker's $N^\text{th}$ move and observe that $|X| \leq 2N$.
    For all vertices $v$ in
    \begin{align*}
        B^* \coloneqq \bigcup_{i \in [s+t]} V(G_i) \setminus (V(K_C^s) \cup X),
    \end{align*}
    Breaker plays in parallel Maker-Breaker games depending on the corresponding vertex. Observe that for all $v \in B^*$, each of Breaker's winning strategies in $\MBauxM{H_a}{1}$ and $\MBauxM{H_a}{2}$ yields him a winning strategy in $\MBauxVertexM{H_a}{1}{v}$ and $\MBauxVertexM{H_a}{2}{v}$, respectively.

    Whenever Maker claims an edge $vw \in E(B)$ with $v \in V(S_k) \cap V(B^*)$ and $w \in V(H_k)$ for $k \in [Cs]$, Breaker claims an edge according to his winning strategy in $\MBauxVertexM{H_k}{1}{v}$. Observe that by doing so, Maker can connect at most $2N$ leaves of $S_k$ completely to an edge of $H_k$.

    Whenever Maker claims an edge $vw \in E(B)$ with $v \in V(\hat{S}_a^b)$ and $w \in V(H_{Cs+b}) \cup \{\hat{r}_a^b\}$ for $a \in \N$ and $b \in [t]$, Breaker claims an edge according to his winning strategy in the game $\MBauxVertexM{H_{Cs+b}}{2}{\hat{r}_a^b}$\footnote{in which Maker is allowed to pass (see \Cref{rem:passingAllowed})} as long as not all edges of $E(\{\hat{r}_a^b\}, H_{Cs+b})$ have been claimed.
    Once all edges of $E(\{\hat{r}_a^b\}, H_{Cs+b})$ have been claimed, let $M_a^b$ be the set of vertices $w \in V(H_{Cs+b})$ for which Maker has claimed the edge $\hat{r}_a^b w$.

    \begin{figure}[htbp]
        \centering
        \includegraphics[width=0.55\textwidth]{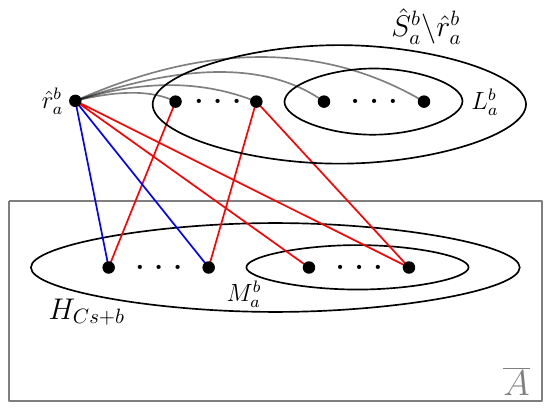}
        \caption{
            The red and blue edges have been claimed by Maker and Breaker, respectively. Whenever Maker tries to connect a finite star $\hat{S}^b_a$ to $H_{Cs+b}$, in a first stage, Maker can connect $\hat{r}_a^b$ only to a small subset $M_a^b \subseteq V(H_{Cs+b})$. Let $L^b_a \subseteq V(\hat{S}^b_a) \setminus \hat{r}^b_a$ be the set of leaves that is still fresh afterwards. If in a second stage for any $\ell \in L^b_a$, Maker manages to claim the edge $\hat{r}_a^b \ell$ before Breaker, he can prevent her from connecting $\ell$ to an edge of $H_{Cs+b}[M^b_a]$.
        }
        \label{pic:Breakereq}
    \end{figure}

    Let $L_a^b$ be the set of leaves of $\hat{S}_a^b$ for which no edge of $E(L_a^b, H_{Cs+b} \cup \{\hat{r}_a^b\})$ has been claimed after all edges of $E(\{\hat{r}_a^b\}, H_{Cs+b})$ have been chosen. In addition, observe that $|L_a^b| \geq (a - 1) - (|V(H_{Cs+b})| + N)$ and that for each $\ell \in L_a^b$, Breaker has a winning strategy in $\MBauxVertexB{H_{Cs+b}[M_a^b]}{1}{\ell}$ by definition of $\MBauxM{H_{Cs+b}}{2}$.
    If Maker does not claim $\hat{r}_a^b \ell$ as her first edge for a fresh $\ell \in L_a^b$, Breaker claims it. Suppose now that for each fresh $\ell \in L_a^b$, Maker claims $\hat{r}_a^b \ell$ at first. After Maker has claimed $\hat{r}_a^b \ell$, Breaker immediately claims an edge according to his winning strategy in $\MBauxVertexB{H_{Cs+b}[M_a^b]}{1}{\ell}$ and plays according to that strategy, whenever Maker claims an edge $\ell w$ with $w \in M_a^b$ in the future. Observe that for each $\ell \in L_a^b$, if Maker claims $\hat{r}_a^b \ell$, then she cannot connect $\ell$ completely to an edge of $H_{Cs+b}[M_a^b]$.

    Assume for a contradiction, that Maker has claimed a copy $K \subseteq B$ of $\KalephNull$, that wins her the game $\MBAscInfStars{C}{s}{t}$. By definition of Maker's goal in $\MBAscInfStars{C}{s}{t}$, we must have $v_1, \dots, v_s \in K$, such that $d_{G_i \cap K}(v_i) = \alephNull$ for each $i \in [s]$. Therefore, we obtain $e \coloneqq \{v_1, \dots, v_s\} \in K_C^{(s)}(\overline{A})$ and we set $k \coloneqq c(e)$.

    If $k \in [Cs]$, we have without loss of generality $v_1 = r_k \in e$. Due to Breaker's winning strategy in $\MBauxM{H_k}{1}$, Maker has not been able to connect more than $2N$ leaves of $S_k$ completely to $e$, which is an edge of $H_k$. Hence, $d_{G_i \cap K}(v_1) = d_{G_i \cap K}(r_k) \leq 2N$.

    If $k \in \{Cs+1, \dots, Cs+t\}$, without loss of generality, let $k = Cs+1$ and let $S \coloneqq \hat{S}^1_a \cap K$ for some $a \in \N$, such that $\hat{r}^1_a \in S$. By construction for all $\ell \in L^1_a$, Breaker has either claimed $\hat{r}^{1}_a \ell$, which results in $\ell \notin V(K)$ since $\hat{r}^{1}_a \in V(K)$, or Breaker has been able to prevent Maker from connecting $\ell$ completely to an edge $e$, which is an edge of $H_{Cs+1}[M_a^1]$. Since $e \subseteq V(K)$, at most $|V(H_{Cs+1})| + N$ leaves of $\hat{S}^1_a$ are contained in $K$. Hence, $\Delta(K \cap G_{s+1}) \leq |V(H_{Cs + 1})| +N$, which completes the proof.
\end{proof}

In order to give a sufficient condition for Maker to win $\MBfin{C}{s}{t}$, we need to apply the following result by Beck \cite[Theorem 2.4]{BeckRamseyGames2002} to our auxiliary game.
\begin{theorem}
    \label{thm:Beck02}
    For some $s \in \N$, let $\mathcal{F}$ be an $s$-uniform hypergraph. Maker has a winning strategy in $\MBauxM{\mathcal{F}}{1}$, if $|E(\mathcal{F})| > 2^{s-3} \cdot \Delta_2(\mathcal{F}) \cdot |V(\mathcal{F})|$, where
    \begin{align*}
        \Delta_2(\mathcal{F}) \coloneqq \max_{x,y \in V(\mathcal{F})}|\{e \in E(\mathcal{F}) \ | \ x,y \in e\}|.
    \end{align*}
\end{theorem}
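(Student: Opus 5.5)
We prove \Cref{thm:Beck02} with a weighted Erdős--Selfridge potential argument, taking Maker's side. Consider $\MBauxM{\mathcal{F}}{1}$ and let $M_i$ and $D_i$ be the sets of vertices claimed by Maker and Breaker after $i$ full rounds. Call an edge $e \in E(\mathcal{F})$ \emph{alive} if $e \cap D_i = \emptyset$, and give it the weight $w_i(e) \coloneqq 2^{|e \cap M_i|}$. The potential is
\begin{align*}
    \Phi_i \coloneqq \sum_{e \text{ alive after round } i} w_i(e), \qquad \Phi_0 = |E(\mathcal{F})|.
\end{align*}
For a fresh vertex $v$ let $\Phi_i(v) \coloneqq \sum_{e \ni v \text{ alive}} w_i(e)$. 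Maker's strategy in round $i+1$ is to claim a fresh vertex $x$ maximizing $\Phi_i(x)$. If $x$ completes an alive edge, she has won, so we may assume this never happens. Under that assumption every alive edge has $|e \cap M_i| \le s-2$ before Maker's move.

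The key step is the one-round estimate. When Maker claims $x$, every alive edge through $x$ doubles its weight, so the potential increases by exactly $\Phi_i(x)$. Breaker then claims $y$, killing every alive edge through $y$ with its updated weight. Hence
\begin{align*}
    \Phi_{i+1} = \Phi_i + \Phi_i(x) - \Phi_i(y) - \sum_{e \ni x,y \text{ alive}} w_i(e).
\end{align*}
The correction term appears because edges containing both $x$ and $y$ were doubled before being removed. The choice of $x$ gives $\Phi_i(x) \ge \Phi_i(y)$. Each edge in the correction term has weight at most $2^{s-2}$ by the non-completion assumption, and there are at most $\Delta_2(\mathcal{F})$ such edges. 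Therefore $\Phi_{i+1} \ge \Phi_i - 2^{s-2}\Delta_2(\mathcal{F})$.

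The game lasts at most $\lceil |V(\mathcal{F})|/2 \rceil$ rounds. If $|V(\mathcal{F})|$ is odd, the final round consists of a Maker move only and causes no loss. Summing the per-round estimate over at most $|V(\mathcal{F})|/2$ full rounds gives
\begin{align*}
    \Phi_{\text{end}} \ge |E(\mathcal{F})| - 2^{s-3}\Delta_2(\mathcal{F})|V(\mathcal{F})| > 0
\end{align*}
by hypothesis. At the end all vertices are claimed, so an alive edge contains no Breaker vertex and is therefore entirely Maker's. Since $\Phi_{\text{end}} > 0$, such an edge exists, which contradicts the assumption that Maker never completed an edge. So Maker wins.

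The main obstacle is the bookkeeping in the one-round identity. We must check that the double counting of edges through both $x$ and $y$ is the only loss, and that the weight bound $2^{s-2}$ for these edges comes from the non-completion assumption applied before Maker's move. We must also handle the parity of $|V(\mathcal{F})|$ so that the constant comes out as exactly $2^{s-3}$.
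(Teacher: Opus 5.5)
The paper does not prove this statement; it quotes it from Beck [Theorem 2.4] and uses it as a black box. Your argument is the standard potential-function proof of Beck's theorem, and it is correct. It uses weights $2^{|e\cap M_i|}$ on alive edges and a greedy choice of Maker's vertex. The per-round loss is bounded by $2^{s-2}\Delta_2(\mathcal{F})$, and Breaker makes at most $|V(\mathcal{F})|/2$ moves, which gives exactly the constant $2^{s-3}$. So you supply a self-contained proof where the paper only gives a citation.

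One justification needs repair. It is not true that, just because Maker never completes an edge, every alive edge has $|e\cap M_i|\le s-2$ before Maker's move. An alive edge can have $s-1$ Maker vertices: the greedy choice may go to a vertex of larger potential, and Breaker then takes the last vertex of that edge. You do not need this claim. The correction term involves only alive edges containing both $x$ and $y$. Both vertices were unclaimed after round $i$, so $|e\cap M_i|\le s-2$ holds automatically for exactly these edges. This is the correct source of the bound $2^{s-2}$.

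Once this is fixed, the non-completion assumption can be dropped entirely. The inequality $\Phi_{\text{end}}>0$ directly gives an alive edge after all vertices are claimed, and such an edge lies entirely in Maker's set.
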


Recall \Cref{def:KCs}. One can prove a similar result in the game $\MBauxM{\mathcal{F}}{2}$ for $\mathcal{F} \subseteq K_C^{(s)}$ being a sufficiently large subgraph.

\begin{lemma}
    \label{lem:Aux2Game}
    Let $\mathcal{F} \subseteq K_C^{(s)}$ be a $s$-uniform hypergraph, such that $|E(\mathcal{F})| \geq \frac{C^s}{2t}$ for some $s, t \in \N$ and $C \geq 2^{3s+1} \cdot t$. Then, Maker has a winning strategy in $\MBauxM{\mathcal{F}}{2}$.
\end{lemma}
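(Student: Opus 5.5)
The plan is to combine a degree-based greedy strategy for Maker in the outer game with Beck's criterion (\Cref{thm:Beck02}) for the inner Breaker-first game $\MBauxB{\mathcal{F}[X]}{1}$. Recall that $V(\mathcal{F}) \subseteq \{r_1,\dots,r_{Cs}\}$ splits into $s$ classes $V_1,\dots,V_s$ of size $C$, and every edge of $\mathcal{F}$ is a transversal. In $\MBauxM{\mathcal{F}}{2}$ Maker will build her set $X$ so that $X \cap V_i$ has about $C/2$ vertices for each $i$, and so that $\mathcal{F}[X]$ keeps a constant fraction of the density, namely $|E(\mathcal{F}[X])| \gtrsim 2^{-s}|E(\mathcal{F})| \geq 2^{-s}C^s/(2t)$.

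\textbf{Step 1: the outer game.} I would process the classes one at a time. Within class $V_i$, each remaining vertex gets a weight equal to the number of edges of $\mathcal{F}$ through it whose vertices in the earlier classes $V_1,\dots,V_{i-1}$ were all taken by Maker. Maker always picks the available vertex of largest weight, and Breaker's reply in the same class is at best the next one. When both players alternate in a class, taking vertices in weight order guarantees Maker at least half of the total weight. Breaker could instead play in a different class to disrupt the order, but by \Cref{rem:passingAllowed} we may treat such moves as pre-claims that only shift the alternation by one. A cleaner route, which I would try first, is an averaging or potential-function argument in the Erd\H{o}s--Selfridge style: use the potential $\sum_{e}2^{-|e\cap \text{Breaker}|}$ restricted to edges with no Breaker vertex, weighted by $2^{|e \cap \text{Maker}|}$. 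This should give a final Maker set $X$ with $|E(\mathcal{F}[X])| \geq |E(\mathcal{F})|/2^{s}$ up to a constant. This is the step I expect to be the main obstacle, because a plain potential argument bounds the number of edges fully inside Maker's set only in expectation-like form, and the greedy argument must be made precise.

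\textbf{Step 2: the inner game via \Cref{thm:Beck02}.} We have $|V(\mathcal{F}[X])| \leq Cs$ and $\Delta_2(\mathcal{F}[X]) \leq C^{s-2}$, since two vertices in distinct classes determine the edge up to the remaining $s-2$ coordinates. Beck's condition then asks for
\[
|E(\mathcal{F}[X])| > 2^{s-3}\cdot C^{s-2}\cdot Cs = s\,2^{s-3}C^{s-1}.
\]
With $|E(\mathcal{F}[X])| \geq C^s/(2^{s+1}t)$, this reduces to $C > s\,2^{2s-2}t$, which is implied by $C \geq 2^{3s+1}t$ because $s \leq 2^{s}$. The hypothesis therefore leaves slack of roughly a factor $2^{s}$. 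That slack absorbs a weaker density bound from Step 1, for instance a loss of $4^{-s}$ rather than $2^{-s}$, and also covers the first move.

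\textbf{Step 3: Breaker moves first.} Beck's theorem is stated for Maker starting. To pass to $\MBauxB{\cdot}{1}$, I would let Breaker's first vertex $v$ be deleted. This removes at most $C^{s-1}$ edges, a $1/C$ fraction, which is negligible under the slack. The inequality then still holds for $\mathcal{F}[X\setminus\{v\}]$, and Maker wins the remainder as first player. This last point uses the convention that Breaker's later claims in the original game are just claims in the subgame.
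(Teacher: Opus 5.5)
Your route works and differs from the paper's. However, Step 1 has to be done with the potential you name, not with the class-by-class greedy, and that step is routine rather than an obstacle.

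\textbf{Drop the class-by-class greedy.} It fails outright. While you work through $V_1$, Breaker can ignore $V_1$ and claim all of $V_2$. After that $\mathcal{F}[X]$ has no edges (for $s\ge 2$), so his moves elsewhere are not harmless pre-claims.

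\textbf{The potential argument.} Given Maker's set $M$ and Breaker's set $D$, put $\Phi=\sum_{e\in E(\mathcal{F}),\,e\cap D=\emptyset}2^{|e\cap M|}$. Let $\phi(v)$ be the same sum over edges through a free vertex $v$. Suppose Maker always takes a free $x$ maximising $\phi$ and Breaker answers with $y$. Then $\Phi$ changes by $\phi(x)-\phi(y)-\sum 2^{|e\cap M|}$, where the last sum runs over surviving edges through both $x$ and $y$. Hence $\Phi$ changes by at least $-2^{s-2}\Delta_2(\mathcal{F})$ per round.

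There are at most $|V(\mathcal{F})|/2$ rounds. $\Phi$ starts at $|E(\mathcal{F})|$ and ends at exactly $2^s|E(\mathcal{F}[X])|$, since at the end the surviving edges are precisely those inside $X$. So the bound is deterministic, not expectation-like:
\[
|E(\mathcal{F}[X])|\ \ge\ 2^{-s}\bigl(|E(\mathcal{F})|-2^{s-3}\Delta_2(\mathcal{F})\,|V(\mathcal{F})|\bigr)\ \ge\ 2^{-s-1}|E(\mathcal{F})|\ \ge\ \frac{C^s}{2^{s+2}t}.
\]
Here you use $2^{s-3}\Delta_2(\mathcal{F})|V(\mathcal{F})|\le s2^{s-3}C^{s-1}\le C^s/(4t)$. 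This is just Beck's proof of \Cref{thm:Beck02} read quantitatively.

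After deleting Breaker's first vertex, at least $C^{s-1}\bigl(C/(2^{s+2}t)-1\bigr)\ge C^{s-1}(2^{2s-1}-1)>s2^{s-3}C^{s-1}$ edges remain. So Steps 2 and 3 go through as you wrote them. Your aside that a $4^{-s}$ loss would also be absorbed is wrong: it would require $C>s2^{3s-2}t$, which the hypothesis does not give once $s\ge 8$.

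\textbf{Comparison with the paper.} The paper argues structurally. A greedy degree count finds four pairwise disjoint copies of the tree hypergraph $T_4^{(s)}$ in $\mathcal{F}$; this is where $C\ge 4^s2^{a+2}t$ for all $a\le s-1$ is used. In the outer game, Maker applies the pairing strategy of \Cref{lem:connVertexToHalfTrees} (via \Cref{rem:connVertexToHalfTrees}) to these copies and secures two copies of $T_2^{(s)}$. In the Breaker-first inner game, the same lemma on two binary trees completes a root-to-leaf path, i.e.\ an edge. So the paper needs no counting here: its strategies are explicit pairings on a bounded substructure, built from the same tree lemma that drives \Cref{thm:lucagame} and \Cref{thm:eq:Maker}.

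Your version replaces this structure by a weight-function argument in both stages, the same technique as in \Cref{thm:MakerWinFinGame}. Given \Cref{thm:Beck02} it is shorter, and it even works with $C$ of order $s\,4^{s}t$ instead of $2^{3s+1}t$. The price is a less explicit strategy that recomputes weights every move.
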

\begin{proof}
    We begin by showing that there exist four pairwise vertex disjoint copies of $T_4^{(s)}$ in $\mathcal{F}$.
    Fix an enumeration of the partition classes $P_1, \dots, P_s$ of $K_C^{(s)}$ and suppose that for a fixed $0 \leq a \leq s-1$, every $\ell \leq a$ and all $(j_1, \dots, j_\ell) \in [4]^\ell$, we have already chosen pairwise different vertices $x_{(j_1, \dots, j_\ell)} \in P_\ell$, such that for
    \begin{align*}
        E_{(j_1, \dots, j_a)} \coloneqq \{e \in E(\mathcal{F}) \ | \ x_{(j_1)}, x_{(j_1, j_2)}, \dots,x_{(j_1, \dots, j_a)} \in e \},
    \end{align*}
    we have $|E_{(j_1, \dots, j_a)}| \geq \frac{C^{s-a}}{2^{a+1}t}$. This holds by construction for $a = 0$.
    Now, we fix a sequence $(i_1, \dots, i_a) \in [4]^a$. For $x \in P_{a+1}$, define
    \begin{align*}
        E_x \coloneqq \{e \in E(\mathcal{F}) \ | \ x_{(i_1)}, x_{(i_1, i_2)}, \dots,x_{(i_1, \dots, i_a)}, x \in e\}.
    \end{align*}
    Assume for a contradiction that there exist less than $4^s$ vertices in $P_{a+1}$, such that $|E_x| \geq \frac{C^{s-(a+1)}}{2^{a+2}t}$.
    By using that $C \geq 4^s \cdot 2^{a+2} \cdot t$ for all $a \leq s-1$ in $(*)$, we obtain that
    \begin{align*}
        |E_{(j_1, \dots, j_a)}|
         & = \sum_{x \in P_{a+1}} |E_x|
        \leq (C-4^s +1) \cdot \left(\frac{C^{s-(a+1)}}{2^{a+2}t} -1 \right) + (4^s-1) \cdot C^{s-(a+1)} \\
         & < \frac{C^{s-a}}{2^{a+2}t} + 4^s \cdot C^{s-(a+1)}
        = C^{s-(a+1)} \left( \frac{C}{2^{a+2}t} +4^s \right)                                            \\
         & \overset{(*)}{\leq} C^{s-(a+1)} \left( \frac{C}{2^{a+1}t} \right)
        \leq |E_{(j_1, \dots, j_a)}|.
    \end{align*}
    Hence, for all $(j_1, \dots, j_{a+1}) \in [4]^{a+1}$, we can choose $x_{(j_1, \dots, j_{a+1})} \in P_{a+1}$ pairwise disjoint, such that $|E_{(j_1, \dots, j_{a+1})}| \geq \frac{C^{s-(a+1)}}{2^{a+2}t}$.

    By definition of $C$, we have that for each $(j_1, \dots, j_s) \in [4]^s$, we have $E_{(j_1, \dots, j_s)} \neq \emptyset$ and that
    \begin{align*}
        \bigcup_{(j_1, \dots, j_s) \in [4]^s} E_{(j_1, \dots, j_s)} \subseteq E(\mathcal{F})
    \end{align*}
    corresponds to the edge set of four copies $T_1, \dots, T_4$ of $T_4^{(s)}$ as desired.

    Let $X = V(T_1) \cup \ldots \cup V(T_4)$.
    By \Cref{rem:connVertexToHalfTrees} and the equivalence of $\MBauxM{\mathcal{F}[X]}{2}$ and $\MBauxVertexM{\mathcal{F}[X]}{2}{x}$ for any $x \in B$, Maker can claim a vertex set $Y \subseteq X$ containing two copies of $T_2^{(s)}$ in the game $\MBauxM{\mathcal{F}[X]}{2}$. By \Cref{lem:connVertexToHalfTrees}, Maker has a winning strategy in $\MBauxB{\mathcal{F}[Y]}{1}$. This completes the proof.
\end{proof}

By combining the tools we have developed so far, we can finally give a sufficient condition on $C$, such that Maker has a winning strategy in $\MBfin{C}{s}{t}$.

\begin{theorem}
    \label{thm:MakerWinFinGame}
    Let $s \in \N$ and $t \in \N_0$. For $C = \max\{\lceil s^2 \cdot 2^{s-3} \rceil, 2^{3s+1} \cdot t\}$ and any $C' \geq 3C (s+1)^{Cs + 1}$, Maker has a winning strategy in $\MBfin{C'}{s}{t}$.
\end{theorem}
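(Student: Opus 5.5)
The plan is to let Maker build a completely claimed copy of $K_C^s$ inside $K_{C'}^s$, and then show that every admissible colouring of the $C^s$ hyperedges of this copy has one colour class that Maker wins. \Cref{thm:lucagame} does the first part and \Cref{thm:Beck02} together with \Cref{lem:Aux2Game} do the second.

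\emph{Step 1: Maker claims a full $K_C^s$.} I would use the strategy of \Cref{thm:lucagame} with $k = s$ vertex colours, where the colour of a vertex is its partition class of $K_{C'}^s$, and with parameter $C$ in place of $s$. That strategy is formulated on $\KalephNull$, and the comparability graphs it builds contain pairs of vertices of the same colour, which are not edges of $K_{C'}^s$. Breaker can never claim such pairs. So Maker simply regards them as already hers and plays the strategy on all remaining pairs. If the strategy asks her to claim one of these imaginary edges, she claims an arbitrary edge instead, which is allowed by the analogue of \Cref{rem:passingAllowed}.

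By the quantitative part of \Cref{thm:lucagame}, after at most $C(s+1)^{Cs+1}$ moves she has a clique on $Cs$ vertices with exactly $C$ vertices in each class. The strategy only ever needs fresh vertices of each colour, and at most $3C(s+1)^{Cs+1} \le C'$ vertices per class are used or touched. Hence the strategy never runs out of vertices, and afterwards all cross edges of a copy $K$ of $K_C^s$ are Maker's. Let $H$ be her edge set. Then $K_C^{(s)}(H)$ contains the full $K_C^{(s)}$ on $V(K)$, which has $C^s$ hyperedges.

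\emph{Step 2: analysing an arbitrary colouring.} Fix an admissible colouring $c$ and restrict it to the $C^s$ hyperedges of the copy of $K_C^{(s)}$. Only the $Cs$ vertex-colours $j$ with $r_j \in V(K)$ and the $t$ star-colours occur. One of the following two cases holds.

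\textbf{Case (a): star-colours cover at least $C^s/2$ hyperedges.} By pigeonhole, some star-colour class $\mathcal{F}$ has at least $C^s/(2t)$ hyperedges. Since $C \ge 2^{3s+1}t$, \Cref{lem:Aux2Game} applies, so Maker wins $\MBauxM{\mathcal{F}}{2}$. Monotonicity lets her pass from $\mathcal{F}$ up to the whole colour class.

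\textbf{Case (b): vertex-colours cover at least $C^s/2$ hyperedges.} Then some vertex-colour $j$ has at least $C^{s-1}/(2s)$ hyperedges, all of which contain $r_j$. In $\MBauxM{c^{-1}(j)}{1}$, Maker first claims $r_j$. She then plays, with Breaker moving first, on the link hypergraph $\mathcal{L}$. This is an $(s-1)$-uniform subhypergraph of $K_C^{(s-1)}$ with $|V(\mathcal{L})| \le C(s-1)$ and $\Delta_2(\mathcal{L}) \le C^{s-3}$.

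To absorb Breaker's extra first move, I would remove from $\mathcal{L}$ the vertex Breaker claims. This costs at most $C^{s-2}$ hyperedges. Alternatively, one can use the Breaker-first form of Beck's potential argument, which costs a factor $2$. In either version the criterion of \Cref{thm:Beck02} for $\mathcal{L}$ becomes roughly
\[
\frac{C^{s-1}}{2s} > 2^{s-3} \cdot C^{s-3} \cdot Cs ,
\]
which is equivalent to $C^{s-1} > s^2 2^{s-2} C^{s-2}$, i.e.\ $C > s^2 2^{s-2}$. This is where the hypothesis $C \ge \lceil s^2 2^{s-3}\rceil$ must enter.

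\emph{Main obstacle.} I expect the hardest part to be this constant bookkeeping in Case~(b). One issue is splitting the $C^s$ hyperedges unevenly between the two cases, for instance using $|E(\mathcal{F})| \ge C^s/(2t)$ only where needed and giving vertex-colours a larger share. The other is doing the Breaker-first reduction so that $C = \lceil s^2 2^{s-3}\rceil$ exactly suffices. If the constant does not close, I would instead apply \Cref{thm:Beck02} directly to the union of all vertex-colour classes, using that a Maker win on any single class is enough. Small cases also need separate checks: $s = 1$ or $2$, where $\Delta_2$ and the link degenerate, and $t = 0$, where Case~(a) is vacuous.
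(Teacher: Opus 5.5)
Your route is the paper's own. You claim a $K_C^s$ via \Cref{thm:lucagame} and split the $C^s$ hyperedges by pigeonhole. A star colour with at least $C^s/(2t)$ hyperedges is handled by \Cref{lem:Aux2Game}. For a vertex colour $j$ with at least $C^{s-1}/(2s)$ hyperedges, Maker takes $r_j$, deletes Breaker's reply $x$ from the link, and applies \Cref{thm:Beck02}. Your remark that Maker may treat same-class pairs as imaginary edges of hers is a useful detail the paper leaves implicit.

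The real error is in the computation you flag as the obstacle, and the factor-$2$ mismatch you found comes from it. The link hypergraph is $(s-1)$-uniform, so \Cref{thm:Beck02} gives the factor $2^{(s-1)-3}=2^{s-4}$, not $2^{s-3}$, and its vertex set has size at most $C(s-1)$, not $Cs$. Set $\mathcal{F}=\{e\setminus\{r_j\} : e\in c^{-1}(j),\ x\notin e\}$. Then $|E(\mathcal{F})|\ge \frac{C^{s-1}}{2s}-C^{s-2}$ and $\Delta_2(\mathcal{F})\le C^{s-3}$, and Maker moves first in $\MBauxM{\mathcal{F}}{1}$. So what you need is
\[
2^{s-4}\cdot C^{s-3}\cdot C(s-1)<\frac{C^{s-1}}{2s}-C^{s-2},
\qquad\text{i.e.}\qquad
C>s(s-1)2^{s-3}+2s .
\]
This is exactly the inequality the paper checks. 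It follows from $C\ge s^2 2^{s-3}$ when $s\ge 5$, and from $C\ge 2^{3s+1}t$ whenever $t\ge 1$. Deleting $x$ is precisely how Breaker's first move is absorbed. So you do not need a Breaker-first variant of Beck's criterion, which \Cref{thm:Beck02} does not provide anyway.

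Drop the fallback of applying \Cref{thm:Beck02} to the union of the vertex-colour classes. $\MBfin{C'}{s}{t}$ requires a winning strategy on one fixed class $c^{-1}(i)$. In \Cref{thm:eq:Maker} this class decides which star the new leaf is taken from before the auxiliary game starts. A win on a union does not give a win on a single class. For example, let $s=2$ with $a$ in one class and $b,c,d$ in the other, and colour $ab,ac,ad$ by $b,c,d$ respectively. Maker wins on the resulting star, but each class is a single edge, which Breaker blocks.

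Your caution about small cases is justified:
\begin{itemize}
\item When $t=0$ there are no star colours, so pigeonhole gives the better share $C^{s-1}/s$. You need this for $s=3,4$: with the share $C^{s-1}/(2s)$ the displayed inequality fails there. For instance, with $s=3$ and $C=9$ the left side is $9$ and the right side is $4.5$.
\item For $(s,t)=(2,0)$ one has $C=2$, and no argument confined to the claimed $K_2^2$ can work. Colouring the edges of this $4$-cycle cyclically makes every class a single edge. Here Maker has to use more of her graph, e.g.\ a $K_{2,3}$.
\end{itemize}
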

\begin{proof}
    Let $K_{C'}^s$ be as in \Cref{def:KCs} with enumerated vertices $V(K_{C'}^s) = \{r_1, \dots, r_{C's}\}$. We define an $s$-coloring of $V(K_{C'}^s)$ by coloring vertices from different partition classes with different colors. Since there exist at least $3C (s+1)^{Cs + 1}$ vertices in every partition class of $K_{C'}^s$, by \Cref{thm:lucagame}, Maker has a strategy to claim all edges of a $K_C^s \subseteq K_{C'}^s$ in at most $C (s+1)^{Cs + 1}$ moves.

    Suppose we are given $c \colon E(K_C^{(s)}) \to [C's+t]$, such that $r_j \in e$ for all $e \in c^{-1}(j)$ and $j \in [C's]$. Since $|V(K_C^{(s)})| = Cs$, assume that $c^{-1}(j) = \emptyset$ for $j \in \{Cs+1, \dots, C's\}$ up to relabeling vertices by the additional property of $c$. For simplicity, we now consider $c \colon E(K_C^{(s)}) \to [Cs+t]$, instead. For all $k \in [Cs+t]$, let $\mathcal{F}_k \coloneqq c^{-1}(k)$. By the pigeonhole principle, there exists $k' \in [Cs+t]$, such that $|E(\mathcal{F}_{k'})| \geq \frac{C^s}{2t}$, if $k' \in \{Cs+1, \dots, Cs+t\}$ or $|E(\mathcal{F}_{k'})| \geq \frac{C^{s-1}}{2s}$, if $k' \in [Cs]$.

    In the first case, Maker has a winning strategy in $\MBauxM{\mathcal{F}_{k'}}{2}$ by \Cref{lem:Aux2Game}, since $\mathcal{F}_{k'} \subseteq K_C^{(s)}$ with $|E(\mathcal{F}_{k'})| \geq \frac{C^s}{2t}$.
    In the latter case, we must have $r_{k'} \in e$ for all $e \in E(\mathcal{F}_{k'})$ by the additional property of $c$. In the game $\MBauxM{\mathcal{F}_{k'}}{1}$, Maker begins by claiming $r_{k'}$.
    Suppose that Breaker has claimed a vertex $x \in V(\mathcal{F}_{k'})$ in his first move. Let $\mathcal{F}$ be the $(s-1)$-uniform hypergraph, defined by $E(\mathcal{F}) \coloneqq \{e \setminus \{r_{k'}\} \ | \ e \in E(\mathcal{F}_{k'}), \ x \notin e\}$ and $V(\mathcal{F}) \coloneqq \bigcup_{e \in E(\mathcal{F})} e$. Observe that $|E(\mathcal{F})| \geq \frac{C^{s-1}}{2s} - C^{s-2}$. By using that $C \geq s^2 \cdot 2^{s-3}$, we can verify that \Cref{thm:Beck02} is applicable:
    \begin{align*}
        2^{(s-1)-3} \cdot \Delta_2(\mathcal{F}) \cdot |V(\mathcal{F})| \leq  2^{s-4} \cdot C^{s-3} \cdot C(s-1) < \frac{C^{s-1}}{2s} - C^{s-2} \leq |E(\mathcal{F})|.
    \end{align*}
    Hence, Maker has a winning strategy in $\MBauxM{\mathcal{F}}{1}$. By construction of $\mathcal{F}$, she has a winning strategy in $\MBauxM{\mathcal{F}_{k'}}{1}$ as desired by $\MBfin{C'}{s}{t}$.
\end{proof}

\begin{remark}
    For $s = 0$ and $C,t \in \N_0$, Maker has a winning strategy in $\MBfin{C}{s}{t}$ by definition of the game as follows. Since $K_{C}^{(s)}$ is defined as the complete $s$-partite graph, $E(K_{C}^{(0)})$ contains the empty edge $e'$. So for any coloring $c \colon E(K_{C}^{(0)}) \to [t]$, either Maker wins immediately, if $t = 0$ or there exists $i \in [t]$, such that $c^{-1}(i) = \{e'\}$. But since Maker does not have to claim any vertex in order to claim $e'$, she wins both $\MBauxM{c^{-1}(i)}{1}$ and $\MBauxM{c^{-1}(i)}{2}$ and therefore $\MBfin{C}{s}{t}$ for $s = 0$ and all $C,t \in \N_0$.
\end{remark}

With the previous remark and \Cref{thm:eq:Maker}, one obtains that Maker has a winning strategy in $\MBAscInfStars{C}{0}{t}$ for all $C, t \in \N_0$. Another way of observing this without having to check the proof of \Cref{thm:eq:Maker} with $s=0$ is the following.
\begin{remark}
    By \Cref{thm:eq:Maker} (with $s \geq 1$) and \Cref{thm:MakerWinFinGame}, one obtains, that for all $s, t \in \N$, there exists a $C_{s,t} \in \N$, such that Maker has a winning strategy in $\MBAscInfStars{C}{s}{t}$ for all $C \geq C_{s,t}$. Together with the observation that the game is monotone, i.e. that for all $s' \leq s$ and $t' \leq t$ every winning strategy for Maker in $\MBAscInfStars{C}{s}{t}$ is a winning strategy in $\MBAscInfStars{C}{s'}{t'}$, one obtains that Maker has a winning strategy in $\MBAscInfStars{C}{0}{t}$ for all $C, t \in \N_0$.
\end{remark}

\section{Proof of \Cref{thm:kColoring}}

We begin by proving the first part of \Cref{thm:kColoring}.
\begin{lemma}
    \label{lem:kColMaker}
    Given $s,t \in \N_0$ and $G_1, \dots, G_{s+t} \subseteq B$, such that $G_1, \dots, G_s$ are of order $C_1, \dots, C_s$ and $G_{s+1}, \dots, G_{s+t}$ are ascending, respectively. If
    \begin{align*}
        C_1, \dots, C_s \geq \max\{ 2^{t \cdot 2^{(1+o_s(1))3s}}, 2^{2^{(1+o_s(1))s}}\}
    \end{align*}
    then Maker has a winning strategy in $\MBcolorGraphs{s+t}$.
\end{lemma}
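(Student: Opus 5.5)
We reduce \Cref{lem:kColMaker} to \Cref{thm:(part)pattStar} by extracting, inside $G_1,\dots,G_{s+t}$, pairwise vertex disjoint subgraphs of the shapes treated there. Let $C$ and $C'$ be as in \Cref{thm:(part)pattStar} for the given $s,t$, so $C' = 3C(s+1)^{Cs+1}$ with $C=\max\{\lceil s^2 2^{s-3}\rceil, 2^{3s+1}t\}$. Taking logarithms, $\log_2 C' = (1+o_s(1))\, Cs\log_2(s+1)$, which is at most $t\cdot 2^{(1+o_s(1))3s}$ when the second term of $C$ dominates and at most $2^{(1+o_s(1))s}$ otherwise. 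Hence the hypothesis gives $C_i \geq C'' := C' + \text{(a small buffer)}$ for every $i\in[s]$; choosing the $o_s(1)$ terms suitably absorbs the buffer. The goal is therefore to find, for each $i\in[s]$, a copy of $G_{\text{inf}}^{C'}$ inside $G_i$, and, for each ascending $G_{s+j}$, a copy of $G_{\text{star}}$ inside $G_{s+j}$, with all these $s+t$ graphs pairwise vertex disjoint.

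The extraction proceeds greedily. For $i\in[s]$, $G_i$ has a set $S_i$ of $C_i$ vertices of infinite degree in $G_i$. The obstacle is that the sets $S_i$ may overlap for different $i$. We first choose disjoint centre sets $S_i'\subseteq S_i$ of size $C'$ each; this is possible greedily provided $C_i \geq sC'$, which the bound also absorbs into the $o_s(1)$.

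With the centres fixed, we enumerate all vertices and pick leaves one at a time in round-robin fashion. For each centre we take a fresh neighbour in the relevant $G_i$ that has not been used so far. This is always possible because each centre has infinitely many neighbours and only finitely many vertices have been used at any stage. The result is disjoint infinite stars.

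For the ascending graphs we interleave the same round-robin. At stage $m$, for each $j$, we need a star of size $m$ in $G_{s+j}$ avoiding the finitely many used vertices $U$. Ascending gives vertices of arbitrarily large degree, so we pick $v$ outside $U$ with $d_{G_{s+j}}(v)\geq m+|U|$ and take $m$ of its neighbours outside $U$. The star sizes then form the sequence $1,2,3,\dots$ required for a copy of $G_{\text{star}}$. We also ensure that the chosen stars within each $G_i$ are vertex disjoint from one another, by declaring each new star's vertices used.

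With these disjoint subgraphs $G_i'\subseteq G_i$ in hand, we apply \Cref{thm:(part)pattStar}. When $s=0$ we instead apply the remark following \Cref{thm:MakerWinFinGame}. Maker obtains a $K\cong\KalephNull$ such that $G_i'\cap K$ contains a copy of $\SalephNull$ for $i\le s$ and a copy of $G_{\text{star}}$ for $i>s$. In either case $|K\cap G_i|=\alephNull$, since $G_i'\subseteq G_i$ and any edges of $G_i$ outside the chosen pattern only help. So Maker wins $\MBcolorGraphs{s+t}$.

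The main obstacle is the bookkeeping of the asymptotic bound: we must verify that $sC' + O(1) \le \max\{2^{t2^{(1+o_s(1))3s}},2^{2^{(1+o_s(1))s}}\}$ in both regimes of $C$, including small $t$ such as $t=0$. In that case $C=\lceil s^2 2^{s-3}\rceil$ and $\log_2 C' \approx s^3 2^{s-3}\log s = 2^{(1+o(1))s}$, which is fine.
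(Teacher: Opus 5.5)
Your proposal is correct and follows the same route as the paper: fix pairwise disjoint sets of infinite-degree centres of size about $C_i/s$ in each $G_i$, greedily extract pairwise vertex disjoint copies of $G_{\text{inf}}^{C'}$ and $G_{\text{star}}$, and apply \Cref{thm:(part)pattStar}. Your round-robin construction and your explicit check that $sC'$ fits under the stated bound supply details the paper leaves implicit. The only slip is harmless: a centre with $m$ chosen neighbours is a star on $m+1$ rather than $m$ vertices, which is just an index shift.
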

\begin{proof}
    Let $G_1, \dots, G_{s+t} \subseteq B$ be as above and $C' \coloneqq \min\{\frac{C_1}{s}, \dots, \frac{C_s}{s}\}$. For each $i \in [s]$, we choose pairwise disjoint sets $P_i \subseteq V(G_i)$, such that $|P_i| = C'$ and $d_{G_i}(v) = \alephNull$ for all $v \in P_i$.

    We can now recursively construct pairwise disjoint subgraphs $G'_k \subseteq G_k$, such that $G'_k$ is a copy of $G_{\text{inf}}^C$ with $P_k \subseteq G_k'$ for each $k \in [s]$ and $G'_k$ is a copy of $G_{\text{star}}$ for every $k \in \{s+1, \dots, s+t\}$. Applying \Cref{thm:(part)pattStar} with $G'_1, \dots, G'_{s+t}$ yields Maker a winning strategy in $\MBAscInfStars{C}{s}{t}$, which is also a winning strategy in $\MBcolorGraphs{s+t}$ as desired.
\end{proof}

In order to prove the second part of \Cref{thm:kColoring}, we need to introduce the following lemma about an auxiliary game (see \cite[Theorem 2.4.1]{PosGamesHef} for a proof).

\begin{definition}
    For $n,q \in \N$, let $\text{MB}(n,q)$ be the Maker-Breaker game in which Maker and Breaker alternately claim edges of $K_n$. Maker begins and wins if she can claim $K_q \subseteq K_n$. Breaker wins otherwise.
\end{definition}

\begin{lemma}
    \label{lem:auxPosGame}
    Given $q \in \N$ and the Euler number $e$. For every $n \leq \frac{q}{e} \cdot 2^{\frac{q}{2}-1}$, Breaker has a winning strategy in $\text{MB}(n,q)$.
\end{lemma}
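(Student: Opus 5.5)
The plan is the standard Erdős–Selfridge potential-function argument, which is the proof referenced in \cite[Theorem 2.4.1]{PosGamesHef}. Let $\mathcal{H}$ be the hypergraph whose vertices are the edges of $K_n$ and whose hyperedges are the edge sets of the copies of $K_q$ in $K_n$. Then $\text{MB}(n,q)$ is exactly the Maker-Breaker game on $\mathcal{H}$ with Maker starting: Maker wins if and only if she fully claims some hyperedge. This is a $\binom{q}{2}$-uniform hypergraph with $\binom{n}{q}$ edges.

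I will invoke the Erdős–Selfridge criterion in its version where Maker moves first: if
\begin{align*}
    \sum_{A \in E(\mathcal{H})} 2^{-|A|} < \frac{1}{2},
\end{align*}
then Breaker has a winning strategy. If this is not taken as given, I would reprove it briefly. Define the danger
\begin{align*}
    \Phi = \sum_{A \text{ untouched by Breaker}} 2^{-|A \setminus M|},
\end{align*}
where $M$ is the set of elements Maker has claimed. After Maker's first move, $\Phi$ is at most twice its initial value, so $\Phi < 1$. Breaker then always claims the element maximising $\sum_{A \ni x} 2^{-|A\setminus M|}$. Each pair consisting of a Breaker move followed by a Maker move does not increase $\Phi$, since Breaker removes at least as much as Maker adds. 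Hence $\Phi < 1$ throughout. A fully claimed Maker hyperedge would contribute $1$ to $\Phi$, so none can occur. Note also that if Maker begins one may simply use the weaker bound $<\frac{1}{2}$; Beck's version with first-player advantage gives exactly this.

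It remains to verify the numerical condition $\binom{n}{q} 2^{-\binom{q}{2}} < \frac12$ whenever $n \le \frac{q}{e}2^{q/2-1}$. Use $\binom{n}{q} \le \left(\frac{en}{q}\right)^q$. Then
\begin{align*}
    \binom{n}{q}2^{-\binom q2} \le \left(\frac{en}{q}2^{-(q-1)/2}\right)^q \le \left(2^{q/2-1}\cdot 2^{-(q-1)/2}\right)^q = 2^{-q/2}.
\end{align*}
This is at most $\frac12$, and strictly less for $q \ge 3$.

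The main obstacle is getting the constants exactly right at the boundary. The case $q\le 2$ needs separate checking; there $n < 2$ or the statement is trivial, so Breaker wins vacuously since $K_q \not\subseteq K_n$. One must also make sure the strict inequality is available. If the estimate above yields only equality in a borderline case, I would use the sharper bound $\binom{n}{q} \le \frac{n^q}{q!}$ together with $q! > (q/e)^q$ to get strictness.
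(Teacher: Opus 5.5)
Your proof is correct. The paper gives no proof of its own; it cites \cite[Theorem 2.4.1]{PosGamesHef}, and your argument is the standard one behind that result: the Erd\H{o}s--Selfridge criterion for Maker moving first, together with $\binom{n}{q}2^{-\binom{q}{2}} \le (en/q)^q\, 2^{-q(q-1)/2} \le 2^{-q/2} < \tfrac12$ for $q \ge 3$.

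One small fix for $q \le 2$: the hypothesis actually forces $n \le 2/e < 1 \le q$, and that is what gives $K_q \not\subseteq K_n$. Your ``$n<2$'' on its own would not suffice for $q=1$, since it allows $n=1$.
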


We need the following result, which can be obtained by combining \cite[Proposition 5.2.2]{DiestelBible} and \cite[Theorem 8.1.3]{DiestelBible}.
\begin{lemma}
    \label{lem:chromaticNumber}
    Every (infinite) graph $H$ with bounded degree satisfies
    \begin{align*}
        \chi(H) \leq \Delta(H)+1.
    \end{align*}
\end{lemma}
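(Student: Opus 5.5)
The plan is to prove the bound first for finite graphs by a greedy colouring and then pass to infinite graphs by compactness, exactly as the pointers to \cite[Proposition 5.2.2]{DiestelBible} and \cite[Theorem 8.1.3]{DiestelBible} suggest. Throughout, let $d \coloneqq \Delta(H) \in \N$, which is finite by assumption.

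\emph{Finite case.} Let $F$ be a finite graph with $\Delta(F) \leq d$ and enumerate its vertices as $v_1, \dots, v_n$. Colour them in this order, giving $v_i$ the smallest colour in $[d+1]$ not already used on a neighbour $v_j$ with $j < i$. Such a colour always exists, since $v_i$ has at most $d$ neighbours and these block at most $d$ of the $d+1$ colours. The result is a proper colouring with $d+1$ colours, so $\chi(F) \leq d+1$.

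\emph{Infinite case.} Every finite subgraph $F \subseteq H$ satisfies $\Delta(F) \leq \Delta(H) = d$, so by the finite case $\chi(F) \leq d+1$. The de Bruijn--Erd\H{o}s theorem \cite[Theorem 8.1.3]{DiestelBible} says that if every finite subgraph of a graph is $k$-colourable, then the graph itself is $k$-colourable. Applying it with $k = d+1$ gives $\chi(H) \leq \Delta(H)+1$.

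The only step with real content is this transfer from finite to infinite graphs. The paper applies the lemma to countable graphs, namely subgraphs of $B = \KalephNull$, so I would make the transfer self-contained rather than rely on the citation.

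\emph{Transfer for countable $H$.} Enumerate $V(H) = \{u_1, u_2, \dots\}$. Form the tree whose nodes at level $n$ are the proper $(d+1)$-colourings of $H[\{u_1, \dots, u_n\}]$, where a level-$n$ colouring is joined to its restriction at level $n-1$.
\begin{itemize}
    \item Every level is non-empty, by the finite case.
    \item Every level is finite, since there are at most $(d+1)^n$ colourings at level $n$.
\end{itemize}
By \Cref{lem:KÃ¶nig} the tree contains a rooted ray. The colourings along this ray are compatible, so their union is a proper $(d+1)$-colouring of $H$. Compatibility is exactly what the tree structure provides, and each edge of $H$ lies inside some finite level, where it is properly coloured.

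For uncountable $H$ one would instead use the Tychonoff or Zorn form of the compactness argument, which is what \cite[Theorem 8.1.3]{DiestelBible} supplies. This case is not needed for the application here.
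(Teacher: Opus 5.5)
Your proof is correct and follows the same route as the paper, which obtains the lemma by combining the finite greedy bound (Diestel, Proposition 5.2.2) with the de Bruijn--Erd\H{o}s compactness theorem (Diestel, Theorem 8.1.3). Your K\"onig-lemma argument for countable $H$ is the standard proof of that theorem in the countable case, and this is all the paper needs, since it only applies the lemma to subgraphs of the countable board $B$.
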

Recall that $m(A)$ was defined just before \Cref{thm:kColoring} for a family of infinite graphs $G_1, \dots, G_k \subseteq B$ and $A \subseteq [k]$ as the size of a minimal vertex set $\{v_1, \dots, v_{m(A)}\}$, such that for each $j \in A$, there exists $v \in \{v_1, \dots, v_{m(A)}\}$ with $d_{G_j}(v) = \alephNull$.
\begin{lemma}
    Given $s,t \in \N_0$ and $G_1, \dots, G_{s+t} \subseteq B$, such that $G_1, \dots, G_s$ are of order $C_1, \dots, C_s$ and $G_{s+1}, \dots, G_{s+t}$ are ascending, respectively.
    For the Euler number $e$, if there exists a non-empty set $A \subseteq [s]$, such that
    \begin{align*}
        \sum_{j \in A} C_j \leq  \frac{m(A)}{e} \cdot 2^{\frac{m(A)}{2}-1},
    \end{align*}
    then Breaker has a winning strategy in $\MBcolorGraphs{s+t}$. Moreover, Breaker's strategy ensures that $G' = \bigcup_{j \in A} G_j \cap K'$ is a graph of order less than $m(A)$ for each infinite clique $K' \subseteq B$ for which Maker has claimed $E(K')$.
\end{lemma}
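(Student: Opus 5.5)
Breaker will play two kinds of games in parallel: one on the finitely many high-degree vertices, and one that neutralises all remaining edges of bounded degree.

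\textbf{Setup.} Let $S \coloneqq \bigcup_{j\in A} S_j$, where $S_j$ is the set of $C_j$ vertices of infinite degree in $G_j$. Then $|S| \le \sum_{j\in A} C_j \le \frac{m(A)}{e}\, 2^{\frac{m(A)}{2}-1}$. For every $j\in A$ the graph $G_j - S_j$ has bounded degree. Hence $H \coloneqq \bigcup_{j\in A} (G_j - S)$ has bounded degree $\Delta$, since the union is finite. By \Cref{lem:chromaticNumber}, and after applying the same lemma to a suitable power $H^2$ if needed, we may fix a proper colouring of $V(B)\setminus S$ with finitely many colours.

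\textbf{First component (the $K_{m(A)}$ game on $S$).} Breaker plays on $E(B[S])$ according to the strategy of \Cref{lem:auxPosGame} with $n=|S|$ and $q=m(A)$. If Maker claims an edge of $B[S]$ first, this is a legitimate play of $\text{MB}(n,q)$, in which Maker begins. Otherwise Breaker uses the usual extra-move argument, which does not hurt him. As a result, Maker never completes a clique on $m(A)$ vertices of $S$.

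\textbf{Second component (killing bounded-degree edges).} Whenever Maker claims an edge $uv\in E(H)$, Breaker answers according to a pairing strategy, or plays arbitrarily. The goal is that no Maker-clique contains infinitely many $H$-edges spread so that the order is affected. The key observation is this: if $K'$ is an infinite Maker-clique, then the graph $G'=\bigcup_{j\in A}G_j\cap K'$ restricted to $V(K')\setminus S$ is a subgraph of $H$, so it has bounded degree. The only vertices that can have infinite degree in $G'$ are therefore those in $S\cap V(K')$.

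\textbf{Bounding the order of $G'$.} The vertices of infinite degree in $G'$ form a clique in Maker's graph, because they lie in $K'$. By the first component this clique has at most $m(A)-1$ vertices, so $G'$ has order less than $m(A)$. Since $S\cap V(K')$ is finite and $G'-(S\cap V(K'))$ has bounded degree, $G'$ is indeed of some order and not ascending. The Maker-win condition for $\MBcolorGraphs{s+t}$ needs, for every $j\in A$, infinitely many edges of $G_j$ in $K'$. As $G_j - S$ has bounded degree, infinitely many edges of $G_j\cap K'$ force either a vertex of $S$ with infinitely many $G_j$-neighbours in $K'$, or an infinite matching inside $H$.

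\textbf{Main obstacle.} The hard case is an infinite matching of $H$ inside $K'$. To rule it out, Breaker needs the standard strategy against infinite matchings of a bounded-degree graph. This is the mechanism that, in \Cref{thm:2ColoringCol}, lets Breaker win when a colour class has finite maximum degree. I would try the following first: whenever Maker claims an edge $xy$ of $H$, Breaker claims an edge $xz$ with $z$ a neighbour of $y$ in $H$, or an edge from $x$ to the $H$-neighbourhood of a fresh vertex, so that no clique can contain two disjoint $H$-edges beyond a finite set. Making this precise is the step I expect to be hardest.

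If both components succeed, then for every $j\in A$ the infinite-degree vertices of $G_j\cap K'$ are needed, and these must hit all $j\in A$. That requires at least $m(A)$ vertices of $S$ in $K'$, which the first component has ruled out. Hence Maker cannot win.
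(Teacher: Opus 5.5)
\textbf{There is a genuine gap: the strategy that actually defeats Maker is never constructed.}

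Your overall decomposition is the paper's, and several parts are fine:
\begin{itemize}
\item the $\text{MB}(|S|,m(A))$ strategy on the finitely many infinite-degree vertices;
\item your derivation of the ``moreover'' clause from that strategy;
\item the reduction of a Maker win to a Maker clique $K'$ containing infinitely many edges of $H$, i.e.\ an infinite matching of $H$.
\end{itemize}
But Breaker's win rests entirely on the second component, and you leave it unconstructed. You cannot import it from \Cref{thm:2ColoringCol}: in the paper, Breaker's half of that theorem is derived from this very lemma, so the appeal is circular.

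Your tentative rule also cannot work. A clique containing disjoint $H$-edges $uv$ and $wx$ must contain the crossing edges $ux, vw, uw, vx$, which in general are not edges of $H$. A single reply $xz$ to Maker's edge $xy$ only kills cliques through both $x$ and $z$. So it excludes at most the boundedly many $H$-edges at $z$, while $xy$ can later be combined with infinitely many other $H$-edges. For $H$ an infinite matching your reply is essentially useless. Breaker has to answer on the crossing edges, not on the $H$-edges.

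\textbf{The missing idea is a pairing of crossing edges.} The paper's construction runs as follows.
\begin{itemize}
\item It properly colours $L(G)^2$ with finitely many colours via \Cref{lem:chromaticNumber}. Each colour class is then an induced matching, and each edge meets at most one edge of any other class.
\item It refines these classes by orientation data, using the sets $M_1(i,j),\dots,M_4(i,j)$, to obtain $\tilde c$.
\item It pairs $ux$ with $vw$ whenever $uv$ and $wx$ lie in the same $\tilde c$-class, oriented $u\to v$ and $w\to x$.
\end{itemize}
The substantive step is checking that every edge of $B$ lies in at most one pair. Once that holds, two same-class $H$-edges in a Maker clique would give Maker both edges of a pair. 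So Breaker's pairing strategy leaves only finitely many $H$-edges in any Maker clique.

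In fact, the proper colouring $\varphi$ of $H^2$ that you fixed but never used already suffices, and is a bit more direct than the paper's route.
\begin{itemize}
\item Each vertex has at most one $H$-neighbour of each colour.
\item Hence distinct $H$-edges $uv, wx$ with $\varphi(u)=\varphi(w)<\varphi(v)=\varphi(x)$ are disjoint, and you may pair $ux$ with $wv$.
\item This is a genuine pairing, because an edge $yz$ determines both $H$-edges: the one from $y$ towards colour $\varphi(z)$, and the one from $z$ towards colour $\varphi(y)$.
\item The pairs avoid $S$, so Breaker can run this pairing in parallel with your first component.
\item A Maker clique then contains at most one $H$-edge for each pair of colours, hence only finitely many $H$-edges in total.
\end{itemize}
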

\begin{proof}
    Suppose, we are given $G_1, \dots, G_{s+t}$ as above and let $A \subseteq [s]$ be a non-empty set, such that $\sum_{j \in A} C_j \leq \frac{m(A)}{e} \cdot 2^{\frac{m(A)}{2}-1}$. Observe that for \Cref{lem:auxPosGame}, it is sufficient to show that Breaker has a winning strategy in $\text{MB}_{\text{col}}(\{G_i\}_{i \in A})$ and that $G' = \bigcup_{j \in A} G_j \cap K'$ is a graph of order less than $m(A)$ for each infinite clique $K'$ claimed by Maker. Hence, we assume without loss of generality that $A = [s]$ and $t = 0$.

    We define $X \coloneqq \{v \in V(B) \ | \ \exists i \in [s] \colon d_{G_i}(v) = \alephNull \}$ and $K_X \subseteq B$ as the complete graph induced by $X$. Since $|X| \leq \sum_{j \in A} C_j \leq \frac{m(A)}{e} \cdot 2^{\frac{m(A)}{2}-1}$, Breaker has a winning strategy in $\text{MB}(|X|, m(A))$ by \Cref{lem:auxPosGame}. Therefore, he plays according to it on $K_X$. Suppose that after $N$ moves, Breaker has claimed an edge set $E_X \subseteq E(K_X)$, such that $E(K_X) \setminus E_X$ does not contain a clique of size $m(A)$.

    Let $G \coloneqq (G_1 \cup \ldots \cup G_s) \setminus K_X$ and observe that $G$ is bounded by construction. We are going to define a pairing strategy for Breaker on a subset of $E(B \setminus K_X)$. First, recall \Cref{def:linegraph,def:distGraph}. Observe that $\Delta(L(G)^2) \leq (2 \Delta(G))^2 < \alephNull$ and set $k = \Delta(L(G)^2) +1$. By \Cref{lem:chromaticNumber}, there exists a proper vertex coloring $c' \colon V(L(G)^2) \to [k]$ with $c'(u) \neq c'(v)$ for each $\{u,v\} \in E(L(G)^2)$. By definition of $L(G)^2$, this corresponds to a proper edge coloring $c \colon E(G) \to [k]$ with the properties that
    \begin{enumerate}
        \item for $e, e' \in E(G)$ with $e \cap e' \neq \emptyset$, we have $c(e) \neq c(e')$,
        \item for all $i,j \in [k]$ and each $e \in c^{-1}(i)$, there exists at most one edge $e' \in c^{-1}(j)$, such that $e \cap e' \neq \emptyset$.
    \end{enumerate}
    Fix an orientation of the edge set $E(G)$. Formally, we define functions $s', t' \colon E(G) \to V(G)$, such that for each $e \in E(G)$ we have $e = s'(e) t'(e)$. If $e = uv$ with $u = s'(e)$ and $v = t'(e)$, then we indicate this by saying that $e$ is \emph{oriented as} $\vv{uv}$.

    For all $i,j \in [k]$, we partition $c^{-1}(i)$ into the sets $M_1(i,j), \dots, M_4(i,j)$ as follows. For an edge $e \in c^{-1}(i)$, we let $e \in M_1(i,j)$, if $e \cap e' = \emptyset$ for all $e' \in c^{-1}(j)$. Otherwise by (ii), there exists a unique edge $e^* \in c^{-1}(j)$ with $e \cap e^* \neq \emptyset$. Then, we set
    \begin{itemize}
        \item $e \in M_1(i,j)$, if $e \cap e^* = s'(e) = s'(e^*)$,
        \item $e \in M_2(i,j)$, if $e \cap e^* = s'(e) = t'(e^*)$,
        \item $e \in M_3(i,j)$, if $e \cap e^* = t'(e) = s'(e^*)$,
        \item $e \in M_4(i,j)$, if $e \cap e^* = t'(e) = t'(e^*)$.
    \end{itemize}
    This defines a partition $c^{-1}(i) = M_1(i,j) \sqcup M_2(i,j) \sqcup M_3(i,j) \sqcup M_4(i,j)$.
    Using this, we define a coloring $\tilde{c} \colon E(G) \to [k] \times [4]^k$ by setting
    \begin{align*}
        \tilde{c}(e) = (c(e), i_1, \dots, i_k), \text{ if } e \in M_{i_1}(c(e),1) \cap \ldots \cap M_{i_k}(c(e),k).
    \end{align*}
    Finally, we define a pairing of edges
    \begin{align*}
        \mathcal{P} = \big\{ \{ux, vw\} \in E(B)^{(2)} \ | \ \tilde{c}(uv) = \tilde{c}(wx),~s'(uv) = u \text{~and~} s'(wx) = w \big\}.
    \end{align*}
    \begin{figure}[htbp]
        \centering
        \includegraphics[width=0.20\textwidth]{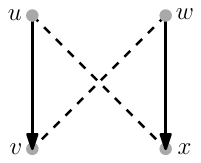}
        \caption{For $uv$ and $wx$ with $\tilde{c}(uv) = \tilde{c}(wx)$ directed as above, we add the dashed crossing edges to $\mathcal{P}$.}
        \label{pic:PairingDef}
    \end{figure}
    Since $uv$ and $wx$ cannot be incident by (i), $\mathcal{P}$ is well-defined.

    We have to verify that each edge of $B$ is contained in at most one pair of $\mathcal{P}$. So, assume for contradiction that we have $\{vw, ux\}, \{vw, yz\} \in \mathcal{P}$ for $u,v,w,x,y,z \in V(B)$ with $ux \neq yz$. We may assume without loss of generality that $u \notin \{y,z\}$.

    If $x \in \{y,z\}$, let without loss of generality $x = y$. By symmetry of $v$ and $w$, we can assume that $\tilde{c}(uv) = \tilde{c}(wx)$. If $\tilde{c}(wy) = \tilde{c}(vz)$, we have in particular $c(uv) = c(wx) = c(vz)$ contradicting (i) and if $\tilde{c}(vy) = \tilde{c}(wz)$, we have $c(vx) = c(wz)$, which contradicts (ii) for $wx \in c^{-1}(j)$ with $j \in [k]$. Hence, we must have $x \notin \{y,z\}$.

    \begin{figure}[H]
        \centering
        \begin{subfigure}{0.43\textwidth}
            \centering
            \includegraphics[width=0.6\linewidth]{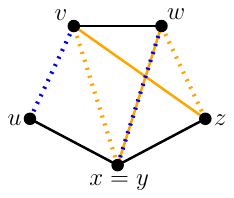}
            \caption{The black edges are the ones in $\bigcup \mathcal{P}$. The blue dotted edges are the ones that cause $\{vw, ux\} \in \mathcal{P}$. Either the orange solid edges or the orange dotted edges cause $\{vw, yz\} \in \mathcal{P}$.}
        \end{subfigure}
        \hspace{0.01\textwidth}
        \begin{subfigure}{0.52\textwidth}
            \centering
            \includegraphics[width=0.7\linewidth]{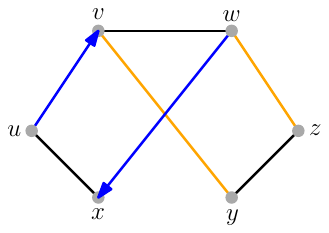}
            \caption{The black edges are from $\bigcup \mathcal{P}$. The blue edges and the orange edges are the ones that cause $\{vw, ux\} \in \mathcal{P}$ and $\{vw, yz\} \in \mathcal{P}$, respectively up to relabeling vertices. The blue edges must be directed similarly to \Cref{pic:PairingDef}.}
        \end{subfigure}
        \caption{The case analysis based on whether $x \in \{y,z\}$  or $x \notin \{y,z\}$.}
    \end{figure}

    By symmetry of $u$ and $x$ (respectively $y$ and $z$), we can assume that $\tilde{c}(uv) = \tilde{c}(wx)$ (respectively $\tilde{c}(vy) = \tilde{c}(wz)$). Up to relabeling colors, assume that $c(uv) = c(wx) = 1$ and $c(vy) = c(wz) = 2$. By symmetry of $v$ and $w$ and by definition of $\tilde{c}$, we can assume that $uv$ is oriented $\vv{uv}$ and $wx$ is oriented $\vv{wx}$. But then $uv \in M_3(1,2) \cup M_4(1,2)$, while $wx \in M_1(1,2) \cup M_2(1,2)$ contradicting $\tilde{c}(uv) = \tilde{c}(wx)$.

    From his $(N+1)^\text{st}$ move onward Breaker plays according to the following pairing strategy. Whenever Maker claims an edge $e$ with $\{e,e'\} \in \mathcal{P}$, Breaker claims $e'$, if possible. Otherwise, Breaker claims any element of $E(B)$. Assume that Maker has claimed all edges of an infinite clique $K \subseteq B$ after countably many moves. Due to his play according to the winning strategy in $\text{MB}(|X|, m(A))$ on $K_X$, we have that $G' \coloneqq \bigcup_{j \in A} G_j \cap K$ is of order less than $m(A)$ as desired.

    Assume now for contradiction that $K$ is winning for Maker in $\MBcolorGraphs{s+t}$. Since $G'$ is of order less than $m(A)$, there must exist $j \in A$, such that $G_j \cap K$ is infinite, but bounded. Hence, $E(G_j \cap (K \setminus K_X))$ is infinite and since $G_j \cap (K \setminus K_X) \subseteq G$, also $E(G \cap K)$ must be infinite. But according to Breaker's pairing strategy, for each $a \in [k] \times [4]^k$, Maker's clique $K$ contains at most $N$ pairs $\{e, e'\} \in \mathcal{P}$ with $\tilde{c}(e) = \tilde{c}(e') = a$. Therefore, $K \cap G$ contains at most $N$ edges of $\tilde{c}^{-1}(a)$ for every $a \in [k] \times [4]^k$, contradicting that $E(G \cap K)$ was infinite.
\end{proof}

\section{Additional results}
In this section, we prove \Cref{thm:2ColoringCol}, \Cref{cor:patternCliqueStar} and \Cref{thm:partpattGame}.
For convenience, we rephrase \Cref{thm:2ColoringCol} in terms of graphs. Observe that \Cref{thm:2ColoringCol} and \Cref{thm:2ColoringGraph} are equivalent.
\begin{theorem}
    \label{thm:2ColoringGraph}
    Given an infinite graph $G \subseteq B$. If either $G$ or its complement $\overline{G}$ is bounded, Breaker has a winning strategy in $\text{MB}_{\text{col}}(G, \overline{G})$. Otherwise, Maker has a winning strategy in $\text{MB}_{\text{col}}(G, \overline{G})$.
\end{theorem}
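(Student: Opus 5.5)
We split according to the dichotomy in the statement.

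\emph{Breaker's side.} Suppose without loss of generality that $G$ is bounded, i.e.\ $\Delta(G)<\alephNull$; if instead $\overline{G}$ is bounded we swap the roles of the two graphs. Apply the Breaker part of \Cref{thm:kColoring} with $s=1$, $t=0$, $G_1=G$ and $A=\{1\}$. Since $G$ is of order $C_1=0$ and $m(A)\geq 1$, the hypothesis $\sum_{j\in A}C_j = 0 \leq \frac{m(A)}{e}2^{m(A)/2-1}$ holds trivially. This already gives Breaker a strategy ensuring that $G\cap K'$ is finite for every infinite clique $K'$ claimed by Maker.

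A more direct route is to rerun the pairing argument of the preceding proof with $X=\emptyset$. We colour $L(G)^2$ properly using \Cref{lem:chromaticNumber}, refine the colouring to $\tilde c$, and pair $\{ux,vw\}$ whenever $\tilde c(uv)=\tilde c(wx)$. Breaker answers every paired edge with its partner. Then each of the finitely many colour classes contributes at most finitely many edges (indeed none after the start) to Maker's clique, so $|K\cap G|<\alephNull$ and Maker fails.

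\emph{Maker's side.} Now assume neither $G$ nor $\overline G$ is bounded. Each is then either ascending or of some order $C\geq 1$. By Ramsey's theorem one of them, say $H\in\{G,\overline G\}$, contains an infinite complete subgraph $Q$; the other graph $H'$ is not bounded. Note that $Q$ does not automatically make $H$ useful as a pattern. The plan is to find an infinite vertex set on which Maker plays, and to use the structure of $H'$ (a vertex of infinite degree, or an ascending sequence) to guarantee infinitely many $H'$-edges. We also need to guarantee infinitely many $H$-edges.

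The key step is to reduce to $\MBAscInfStars{C}{s}{t}$ with $s+t=1$ or $s+t=2$ on suitably chosen subgraphs.

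\begin{itemize}
\item[(a)] If $H'$ is ascending, pick disjoint stars of sizes $1,2,3,\dots$ in $H'$ with vertices avoiding a fixed infinite subset of $Q$. These form a copy of $G_{\text{star}}$.
\item[(b)] If $H'$ has a vertex $v$ of infinite $H'$-degree, take a copy of $\SalephNull$ at $v$.
\end{itemize}

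For the $H$-edges we proceed analogously. Since $H\supseteq Q$ contains vertices of infinite degree, we take a copy of $G_{\text{inf}}^{C'}$ inside $Q$ (disjoint stars with centres and leaves in $Q$) for $C'$ as large as \Cref{thm:(part)pattStar} requires. We must choose all these structures pairwise vertex disjoint, which is possible since $Q$ and the stars in $H'$ are infinite. We can recursively select, using that $H'$'s stars need only infinitely many leaves outside a finite set.

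Apply \Cref{thm:(part)pattStar} with $s=2,t=0$ in case (b), or $s=1,t=1$ in case (a). Maker then obtains a $\KalephNull$ containing a copy of $\SalephNull$ or $G_{\text{star}}$ from each of the two graphs. This yields infinitely many edges of both $G$ and $\overline G$. The only issue is case (b) when the infinite $H'$-star at $v$ must be replicated $C'$ times, since \Cref{thm:(part)pattStar} needs $C'$ disjoint infinite stars in $H'$. I expect this to be the main obstacle.

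To handle it, we first try to bypass the need for many stars. If $H'$ has only finitely many infinite-degree vertices, then either $H'-S$ is ascending, and we use (a), or $H'$ has order $C\geq1$. In the latter case we instead play a wish-function strategy as in \Cref{thm:lucagame}: we treat the two "colours" as leaves of the star at $v$ and vertices of $Q$, connect $v$ first, and alternate along the ray. This mimics \Cref{thm:eq:Maker} with $C=1$, $s=1$, and a single infinite star, whose finite game is trivially won by Maker claiming $r_1$. This yields a $\KalephNull$ containing $v$ with infinitely many $H'$-neighbours and infinitely many vertices of $Q$, hence infinitely many edges of both graphs.
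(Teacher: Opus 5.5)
Your proof is correct and is essentially the paper's. Breaker wins through the Breaker part of \Cref{thm:kColoring} (equivalently the $L(G)^2$-pairing with $X=\emptyset$). Maker wins through \Cref{thm:(part)pattStar} when the non-Ramsey graph $H'$ is ascending, where your $(s,t)=(1,1)$ with stars inside $Q$ replaces the paper's $(s,t)=(0,2)$. When $H'$ has finite order she wins through \Cref{thm:lucagame} rooted at a vertex $v$ of infinite $H'$-degree; your two disjoint infinite colour classes $A_0\subseteq N_{H'}(v)$, $A_1\subseteq V(Q)$ on the board $\{v\}\cup A_0\cup A_1$ simply merge the paper's two subcases $d_H(v)=\alephNull$ and $d_H(v)<\alephNull$.

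Rest that last case on \Cref{thm:lucagame} directly, not on the analogy with \Cref{thm:eq:Maker}, which does not carry it. With $t=0$ nothing forces $H$-edges. Encoding $Q$ as $t=1$ also fails, because Maker loses $\MBfin{1}{1}{1}$: she cannot win $\MBauxM{\{\{r_1\}\}}{2}$.
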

\begin{proof}
    For convenience, set $G_1 = G$ and $G_2 = \overline{G}$. Observe that by Ramsey's theorem \cite{RamseysThm}, we can assume without loss of generality that $G_2$ contains a copy $K'$ of $\KalephNull$, in particular $G_2$ is ascending. If $G_1$ is bounded, one obtains that Breaker has a winning strategy in $\text{MB}_{\text{col}}(G_1, G_2)$ by applying \Cref{thm:kColoring} with $s = t = 1$ and $A = \{1\}$.

    If $G_1$ is ascending, we obtain that Maker has a winning strategy in $\text{MB}_{\text{col}}(G_1, G_2)$ by applying \Cref{thm:kColoring} with $s=0$ and $t=2$. If $G_1$ is of order $C$ for some $C \in \N$, we can assume there exists a vertex $r \in V(B)$ with $d_{G_1}(r) = \alephNull$.

    If $d_{G_2}(r) = \alephNull$, define $c_{\text{vert}} \colon V(B) \to [2]$ by setting $c_{\text{vert}}(r) = 1$ and $c_{\text{vert}}(x) = i$, if $rx \in G_i$ for $i \in [2]$. Applying \Cref{thm:lucagame} with $c_{\text{vert}}$ and $r$ yields a copy of $K$, such that $d_{(G_1 \cap K)}(r) = d_{(G_2 \cap K)}(r) = \alephNull$. If $N_{G_2}(r)$ is finite, set the board $B' = \{r\} \cup (N_{G_1}(r) \cap K')$, which is still infinite. Applying \Cref{thm:lucagame} with $r$ and $k=1$ this time, yields Maker a copy of $\KalephNull$ in $B'$ containing infinitely many edges of $G_1$ and $G_2$.
\end{proof}

\begin{proof}[Proof of \Cref{cor:patternCliqueStar}]
    Let $G_1, \dots, G_k \subseteq B$ be given as in \Cref{cor:patternCliqueStar}. For every $i \in [k]$ for which $G_i$ is a copy of $G_{\text{star}}$, set $G_i^* = G_i$. For each $G_i$ that is a copy of $G_{\text{cli}}$ and each clique $K_{\text{fin}} \subseteq G_i$, fix a maximal star $S_{\text{fin}} \subseteq K_{\text{fin}}$ and set $G_i^* = \bigcup S_{\text{fin}}$. Applying \Cref{thm:(part)pattStar} with $s =0$ and $t = k$ on $G_1^*, \dots, G_k^*$ will yield the desired infinite clique for Maker in \Cref{cor:patternCliqueStar}.
\end{proof}

\begin{proof}[Proof of \Cref{thm:partpattGame}]
    Note that applying \Cref{thm:kColoring} with $s \in \N$, $t = 0$ and $G_1, \dots, G_s$ each being the vertex disjoint union of $\lfloor \frac{1}{e} \cdot 2^{\frac{s}{2}-1} \rfloor$ copies of $\SalephNull$, we obtain with $A = [s]$, that Breaker has a winning strategy in $\MBcolorGraphs{s}$. By the additional consequence of \Cref{thm:kColoring}, $\bigcup_{j \in A} G_j \cap K$ is a graph of order less than $s$ for each copy $K$ of $\KalephNull$ that Maker claims.
    This immediately implies
    \begin{align}
        \label{eq:lowerBound}
        \MBpartpatternNumber{1}{\SalephNull}{s} > \frac{1}{e} \cdot 2^{\frac{s}{2}-1}.
    \end{align}
    Moreover, applying \Cref{thm:(part)pattStar} with $t=0$ yields
    \begin{align}
        \label{eq:upperBound}
        \MBpartpatternNumber{s}{\SalephNull}{1} \leq  2^{2^{(1+ o(1))s}}.
    \end{align}
    With the observations above, the proof is a one-liner:
    \begin{align*}
        \frac{1}{ke} \cdot 2^{\frac{k \ell}{2}-1}
        \overset{(\ref{eq:lowerBound})}{\leq}
        \frac{1}{k} \cdot \MBpartpatternNumber{1}{\SalephNull}{k \ell}
        \overset{\ref{lem:PartPatt:LowerAndUpper}}{\leq}
        \MBpartpatternNumber{k}{\SalephNull}{\ell}
        \overset{\ref{lem:PartPatt:LowerAndUpper}}{\leq}
        \ell \cdot \MBpartpatternNumber{k \ell}{\SalephNull}{1}
        \overset{(\ref{eq:upperBound})}{\leq}
        \ell \cdot 2^{2^{(1+ o(1))k \ell}}
    \end{align*}
\end{proof}

\section{Open problems}
We conclude with open problems of two different types arising from this work.
The first type is about the game $\MBcolor{k}{c}$. We believe that the following problem is a natural one to consider in order to improve the understanding of the game.
\begin{problem}
\label{prob:3color}
Provide a complete characterization of the game $\MBcolor{3}{c}$. Determine which player has a winning strategy in $\MBcolor{3}{c}$ for each coloring $c \colon E(\KalephNull) \to [3]$.
\end{problem}
With the methods provided in this paper, one can determine in many cases which player has a winning strategy in $\MBcolor{3}{c}$, but some key cases remain open. Let $G_1, G_2, G_3$ be the graphs induced by $c^{-1}(\{1\}), c^{-1}(\{2\}), c^{-1}(\{3\})$ in $\MBcolor{3}{c}$, respectively. Without loss of generality, assume that $G_3$ contains a copy of $\KalephNull$ by Ramsey's theorem. We believe that solving the following is a good intermediary step in order to solve \Cref{prob:3color}.
\begin{problem}
Let $G_1$ consist of one or two vertex disjoint copies of $\SalephNull$ and $G_2$ be ascending. Determine the winner in $\MBcolor{3}{c}$ based on $G_1$ and the structure of $G_2$.
\end{problem}
Consider the complete $r$-uniform infinite board $B^{(r)}$ for some $r \geq 3$. Another way of generalizing the game $\MBcolor{k}{c}$ is the following.
\begin{problem}
Given $r \geq 3$. For which colorings $c \colon E(B^{(r)}) \to [2]$, does Maker have a strategy to claim a complete $r$-uniform infinite graph $K \subseteq B^{(r)}$ with $|c^{-1}(i) \cap E(K)| = \alephNull$ for all $i \in [2]$.
\end{problem}

The second type of problems is about the pattern preserving game. For an infinite graph $H$, let $G_H$ be the disjoint union of $\alephNull$ many copies of $H$.
\begin{problem}
For which $k \in \N$ and graphs $H$ does Maker have a winning strategy in $\MBpatternGraphs{k}$ with $G_1, \dots, G_k$ being pairwise disjoint copies of $G_H$?
\end{problem}
Note that $H$ needs to be infinite here, since Breaker has a simple pairing strategy that is winning for him, if $H$ is finite and has at least two edges. For $H = \KalephNull$, it has been shown in \cite[Theorem 4.1]{Bowler_2023} that Maker has a winning strategy for every $k \in \N$. A reasonable choice to consider next is the case $H = \SalephNull$.
\begin{problem}
\label{prob:infStars}
For which $k \in \N$, does Maker have a winning strategy in $\MBpatternGraphs{k}$ for $G_1, \dots, G_k$ being pairwise disjoint copies of $G_{\SalephNull}$?
\end{problem}
As far as we know, \Cref{prob:infStars} is open for all $k \in \N$.

\textbf{AI disclosure:} The authors have used AI (Claude Opus 4.8) only for polishing the writing. All mathematical content is entirely the work of the authors.

\printbibliography

\end{document}